\DeclareMathOperator{\rt}{rt}
\newcommand\xleftrightarrow[2][]{%
  \ext@arrow 9999{\longleftrightarrowfill@}{#1}{#2}}
\newcommand\longleftrightarrowfill@{%
  \arrowfill@\leftarrow\relbar\rightarrow}
\theoremstyle{plain}
\newtheorem{theorem}{Theorem}
\newtheorem{definition}[theorem]{Definition}
\newtheorem{lemma}[theorem]{Lemma}
\newtheorem{proposition}[theorem]{Proposition}
\newtheorem{corollary}[theorem]{Corollary}
\newtheorem{assumption}[theorem]{Assumption}
\newtheorem{remark}[theorem]{Remark}
\newtheorem{conjecture}[theorem]{Conjecture}
\newtheorem{claim}[theorem]{Claim}
\newtheorem{setup}[theorem]{Setup}
\newcommand\ol{\overline}
\newcommand\RR{{\mathbb R}}
\newcommand\ZZ{{\mathbb Z}}
\newcommand\NN{{\mathbb N}}
\newcommand\PP{{\mathbb P}}
\newcommand\HH{{\mathbb H}}
\newcommand\pcs{{p_c^{site}}}
\newcommand\si{\sigma}
\newcommand\lra{\leftrightarrow}
\renewcommand\ell{l}
\newcounter{mycount}
\numberwithin{equation}{section}
\numberwithin{theorem}{section}
\numberwithin{figure}{section}
\title{Tree embeddings and nonuniqueness in site percolation}
\author{Zhongyang Li}
\address{Department of Mathematics,
University of Connecticut,
Storrs, Connecticut 06269-3009, USA}
\email{zhongyang.li@uconn.edu}
\urladdr{\url{https://mathzhongyangli.wordpress.com}}
\begin{document}
\maketitle

\begin{abstract}
We prove a nonuniqueness theorem for Bernoulli site percolation on properly embedded planar graphs (graphs that can be embedded into $\RR^2$ with no accumulation points), and we obtain a general connectivity principle beyond planarity. Let $G$ be an infinite connected graph properly embedded in $\RR^2$ with minimum degree at least $7$. Then
\[
p_c^{\mathrm{site}}(G)<\tfrac12,
\]
and for every
\[
p\in \bigl(p_c^{\mathrm{site}}(G),\,1-p_c^{\mathrm{site}}(G)\bigr),
\]
Bernoulli$(p)$ site percolation on $G$ has almost surely infinitely many infinite open clusters. In particular, this verifies a conjecture of Benjamini and Schramm for properly embedded planar graphs.

The core new ingredient is an explicit embedded-tree separation mechanism for planar nonuniqueness. We construct embedded trees and an embedded forest whose separation properties yield exponential decay of two-point connection probabilities in the auxiliary
face-completion graph obtained by joining vertices that lie on a common finite face (matching graph). To treat the high-density regime, we introduce a binary-tree version of uniform percolation and prove stability of infinite clusters under edge additions, without any bounded-degree assumption.

Beyond the planar theorem, we prove a general lower bound on two-point connectivity under uniqueness for arbitrary infinite locally finite graphs. As a consequence, if
\[
p_c^{\mathrm{site}}(G)<p<p_{\mathrm{conn}}(G),
\]
then Bernoulli site percolation on $G$ has almost surely infinitely many infinite open clusters.
\end{abstract}

\section{Introduction}

\subsection{Overview and Main Results}

A classical question in percolation theory is to understand the number of infinite clusters in the supercritical phase. On transitive graphs this question is by now classical: for Bernoulli percolation,
Aizenman, Kesten and Newman~\cite{AKN87} proved that the number of infinite
clusters is almost surely \(0\), \(1\), or \(\infty\), while Burton and
Keane~\cite{BK89} and Gandolfi, Keane and Newman~\cite{GKN92} established
uniqueness throughout the supercritical phase for large classes of amenable
graphs. For the corresponding unimodular perspective, including uniqueness
results in the amenable unimodular setting, see Aldous and Lyons~\cite{AL07}.
In contrast, on non-amenable graphs one expects a genuine nonuniqueness phase,
and a central conjecture of Benjamini and Schramm~\cite{bs96} predicts that
\(p_c<p_u\) for every non-amenable quasi-transitive graph.

Planarity offers a different route to nonuniqueness, through duality and matching-graph ideas. In quasi-transitive planar graphs these methods lead to a rather complete picture; see, for example,~\cite{bsjams,LP16,ZL17,GrZL22,GrZL221}. The situation is much less understood without quasi-transitivity, where symmetry-based tools such as mass transport are unavailable. The purpose of this paper is to show that, in this nonsymmetric planar setting, one can nevertheless prove nonuniqueness throughout an entire interval by a purely geometric argument.

We consider i.i.d.\ Bernoulli site percolation on infinite connected planar graphs admitting proper embeddings in $\RR^2$, that is, embeddings for which every compact subset of $\RR^2$ intersects only finitely many vertices and edges. For such a graph $G$, write $p_c^{\mathrm{site}}(G)$ and $p_u^{\mathrm{site}}(G)$ for the critical and uniqueness thresholds:
\begin{align}
p_c^{\mathrm{site}}(G)
&:=\inf\Bigl\{p\in[0,1]:\PP_p\bigl(\text{$G$ has an infinite open cluster}\bigr)=1\Bigr\},\\
p_u^{\mathrm{site}}(G)
&:=\inf\Bigl\{p\in[0,1]:\PP_p\bigl(\text{$G$ has a unique infinite open cluster}\bigr)>0\Bigr\}.
\end{align}
Clearly $p_c^{\mathrm{site}}(G)\le p_u^{\mathrm{site}}(G)$. When the inequality is strict, there is a nonuniqueness phase in which infinitely many infinite clusters are expected to appear.

Benjamini and Schramm~\cite[Conjecture~7]{bs96} proposed the following planar degree-$7$ picture.

\begin{conjecture}\label{c11}
Let $G$ be an infinite connected planar graph with minimum degree at least $7$. Then
\[
p_c^{\mathrm{site}}(G)<\tfrac12,
\]
and for every
\[
p\in \bigl(p_c^{\mathrm{site}}(G),\,1-p_c^{\mathrm{site}}(G)\bigr),
\]
Bernoulli site percolation on $G$ has almost surely infinitely many infinite open clusters.
\end{conjecture}

In the quasi-transitive planar setting, nonuniqueness can often be derived from
matching-graph duality together with symmetry arguments. Once quasi-transitivity
is dropped, the symmetry part of this approach is no longer available in the same
form, and new geometric input is needed. Haslegrave and Panagiotis~\cite{HP19}
proved that
\[
p_c^{\mathrm{site}}(G)<\tfrac12
\]
for infinite connected plane graphs with minimum degree at least \(7\) whose
embeddings have no accumulation points. 

Our main result proves this nonuniqueness statement throughout the full conjectural
interval for properly embedded plane graphs. More precisely, we show that for every
\[
p\in\bigl(p_c^{\mathrm{site}}(G),1-p_c^{\mathrm{site}}(G)\bigr),
\]
Bernoulli\((p)\) site percolation on \(G\) has almost surely infinitely many infinite
open clusters. The proof is based on an explicit embedded-tree and embedded-forest
separation mechanism; in particular, the same construction gives a new geometric
proof of \(p_c^{\mathrm{site}}(G)<1/2\).

\begin{theorem}\label{mt1}
Let $G$ be an infinite connected graph properly embedded in $\RR^2$, and assume that every vertex of $G$ has degree at least $7$. Then
\[
p_c^{\mathrm{site}}(G)<\tfrac12.
\]
Moreover, for every
\[
p\in \bigl(p_c^{\mathrm{site}}(G),\,1-p_c^{\mathrm{site}}(G)\bigr),
\]
i.i.d.\ Bernoulli$(p)$ site percolation on $G$ has almost surely infinitely many infinite open clusters.
\end{theorem}

The point of Theorem~\ref{mt1} is conceptual as well as quantitative. It shows
that planar nonuniqueness can be proved in the absence of transitivity by an
explicit geometric mechanism. In particular, the proof does not rely on
quasi-transitivity, mass transport, or bounded-degree assumptions.

Besides Theorem~\ref{mt1}, the paper develops two general tools which are not
specific to properly embedded planar graphs.  The first is a connectivity
criterion for nonuniqueness on arbitrary locally finite graphs.  The second is a
stability principle for infinite clusters under edge additions, designed to work
without bounded-degree assumptions.

\subsection{General Tools Beyond Planarity}

The first general tool is a lower bound on two-point connectivity under
uniqueness.  Let \(\mathcal A_1\) denote the event that there is a unique
infinite open cluster.

\begin{proposition}\label{la68}
Let \(G=(V,E)\) be an infinite, connected, locally finite graph. Then for every
\(u,v\in V\),
\[
\PP_p(u\leftrightarrow v)\ge
\PP_p(u\leftrightarrow\infty)\,
\PP_p(v\leftrightarrow\infty)\,
\PP_p(\mathcal A_1).
\]
\end{proposition}

This proposition says that uniqueness of the infinite cluster forces long-range
two-point connectivity to remain visible.  Thus, if two-point connection
probabilities decay to zero at large distances in the supercritical phase, then
uniqueness is impossible.  To make this precise, let \(d_G\) denote graph
distance in \(G\), and define
\begin{align}
p_{\mathrm{conn}}(G):=
\sup\Bigl\{p\in(0,1):\
\lim_{n\to\infty}
\sup_{\substack{x,y\in V\\ d_G(x,y)\ge n}}
\PP_p(x\leftrightarrow y)=0
\Bigr\}.
\label{df87}
\end{align}

\begin{corollary}\label{l83}
Let \(G=(V,E)\) be an infinite, connected, locally finite graph. If
\[
p_c^{\mathrm{site}}(G)<p<p_{\mathrm{conn}}(G),
\]
then Bernoulli site percolation on \(G\) has almost surely infinitely many
infinite open clusters.
\end{corollary}

The criterion in Corollary~\ref{l83} is independent of planarity, transitivity,
and bounded degree.  It can be viewed as a general principle: uniqueness of the
infinite cluster is incompatible with decay of long-distance two-point
connectivity in the supercritical phase.

The second general tool concerns stability of infinite clusters under edge
additions.  In the high-density part of the proof we pass to an auxiliary graph
obtained from \(G\) by adding edges inside finite faces.  Adding edges can merge
clusters, and recovering information about the original graph is delicate when
the added-edge graph has unbounded degree.  To handle this, Section~\ref{sect:up}
introduces a binary-tree version of uniform percolation.  In particular, for
\(p>\tfrac12\), the embedded binary trees constructed in this paper give the
uniform estimate
\[
\lim_{N\to\infty}\ \inf_{x\in V}\ 
\inf_{\substack{T_{2,x,N}\subseteq T_{2,x}:\\
\text{\rm a rooted binary tree of depth }N\text{\rm\ at }x}}
\PP_p\!\left(
T_{2,x,N}\text{ intersects an infinite open cluster of }G
\right)=1.
\]
This estimate is then used to prove a stability theorem: in the high-density
regime considered in Section~\ref{sect:up}, every infinite open cluster in the
edge-augmented graph contains an infinite open cluster of the original graph.
Thus nonuniqueness can be transferred back from the augmented graph to \(G\)
without imposing any bounded-degree assumption.

\subsection{The Geometric Setting}

The geometric content of the problem enters through the proper embedding. In the quasi-transitive planar setting, duality and matching-graph arguments can often be used at a global level and then converted, via symmetry, into statements about the number of infinite clusters. In the present setting there is no such symmetry, so the embedding itself must supply the mechanism that replaces it.

Proper embeddings play two roles in our argument. First, they allow us to work with finite faces in a locally finite planar environment and to pass naturally to the matching graph. Informally, the matching graph $G_*$ is obtained from $G$ by adding edges between vertices that lie on a common finite face. It is the site-percolation analogue of the planar dual, but here it must be used quantitatively rather than through symmetry identities alone.

Second, proper embeddings make it possible to construct explicit separating objects in the plane. The embedded trees and forests built in this paper are not merely auxiliary subgraphs: they provide the geometric barriers that force long connections to cross many disjoint regions. This separation mechanism is the source of the quantitative decay estimates that later drive the nonuniqueness argument.

A further point is that the graphs considered here need not be quasi-transitive and may have nontrivial end structure. In particular, one cannot reduce the analysis to a homogeneous large-scale geometry. The proof therefore has to extract enough rigidity directly from the embedding and from the degree assumption.

\subsection{Proof Strategy}

We now describe how the planar construction and the two general tools above are
combined to prove Theorem~\ref{mt1}.  The proof is organized around the interval
\[
\bigl(p_c^{\mathrm{site}}(G),\,1-p_c^{\mathrm{site}}(G)\bigr).
\]
Different parts of this interval require different mechanisms.

The first mechanism is the embedded tree construction.  Starting from the
minimum-degree assumption, we construct an explicit tree \(T\subseteq G\) whose
branches have a controlled left/right order in the plane.   The recursive construction gives a uniform lower
bound on the number of descendants in every forward subtree; equivalently, the
branching number of \(T\) is at least \(7/3\).  Hence
\[
p_c^{\mathrm{site}}(T)\le \frac37<\tfrac12,
\]
and since \(T\subseteq G\),
\[
p_c^{\mathrm{site}}(G)\le p_c^{\mathrm{site}}(T)<\tfrac12 .
\]
More importantly, the planar ordering of the branches gives a local separation
event: with positive probability, two closed branches form a barrier, while an
open branch on one side produces an infinite open cluster separated from what
lies beyond the barrier.  Repeating this construction along a disjoint sequence of forward subtrees of the
embedded tree yields infinitely many infinite open clusters for
\begin{align}
p\in
\bigl(p_c^{\mathrm{site}}(T),\,1-p_c^{\mathrm{site}}(T)\bigr).\label{it1}
\end{align}
Thus the embedded tree gives both a new geometric proof of
\(p_c^{\mathrm{site}}(G)<1/2\) and the part (\ref{it1}) of the nonuniqueness interval.

The second mechanism extends nonuniqueness below
\(p_c^{\mathrm{site}}(T)\).  For this we build not just one tree, but an embedded
forest of pairwise disjoint trees.  The forest is used quantitatively.  A long
connection in the matching graph \(G_*\) (Definition \ref{df64}) must cross many disjoint tree barriers.
When the closed vertices are supercritical on the embedded trees, each barrier
has a uniformly positive chance to block the connection, and the blocking events
occur in disjoint regions.  This gives exponential decay of long-distance
connection probabilities in \(G_*\).

At this point the first general tool enters.  Corollary~\ref{l83} converts this
decay into nonuniqueness: in any locally finite graph, uniqueness of the infinite
cluster prevents two-point connectivities from decaying to zero in the
supercritical phase.  Applying Corollary~\ref{l83} to the matching graph \(G_*\),
with the decay supplied by the embedded forest, gives nonuniqueness in the lower
and intermediate part of the interval when $p\in \left(p_c^{site}(G),1-p_c^{site}(T)\right)$.

It remains to handle the high-density regime
\[
p\ge 1-p_c^{\mathrm{site}}(T).
\]
Here closed vertices are too sparse for the previous tree-barrier argument to
work directly.  Instead we study closed clusters in the matching graph \(G_*\),
or equivalently closed \(0\)-\(*\)-clusters in \(G\), where \(*\)-connections
are allowed through finite faces.  The goal is to convert information about
infinite closed \(*\)-clusters into information about infinite open clusters of
the original graph.

The planar input is an end-separation principle.  An infinite closed
\(*\)-cluster with many ends forces open clusters to appear in complementary
regions, unless the separation between two closed tails is provided by an
infinite face of \(G\).  In a finite quadrilateral between two closed tails,
planar site-duality gives three possibilities: an open crossing, a closed
\(*\)-crossing, or an infinite-face corridor.  Since the two closed tails belong
to distinct ends of the same closed component, the closed \(*\)-crossing
alternative is impossible.  Hence, if no infinite open cluster appears in the
gap, an infinite face must provide the separation.  This is the role of the
end-structure analysis in Section~\ref{sect:5}.

There are two cases.  If \(G\) has finitely many ends, the planar separation
argument shows that infinitely many infinite closed \(*\)-clusters, or closed
\(*\)-clusters with many ends, force infinitely many infinite open clusters.  If
\(G\) has infinitely many ends, one must rule out the possibility that all
infinite open clusters live in only finitely many complementary regions.  Claim
\(\ref{cl:bridge}\) does this: it finds a far-out component of
\(G\setminus K_i\) containing a closed tail but no infinite open cluster,
contradicting the fact that every such far-out component contains an infinite
open cluster in the parameter range under consideration.

The high-density argument is first proved after triangulating finite faces.  In
the triangulated graph, closed \(*\)-clusters become ordinary closed clusters,
and finite-face ambiguities disappear.  Section~\ref{sect:6} verifies the
polygon-counting estimate needed in this triangulated setting.

Finally, the second general tool transfers the result back to the original graph.
Triangulating finite faces adds edges, and these added edges may merge open
clusters.  Since the resulting graph can have unbounded degree, standard
bounded-degree stability results are not enough.  The binary-tree uniform
percolation estimate of Section~\ref{sect:up} shows that infinite clusters in
the edge-augmented graph contain infinite clusters of the original graph.  This
edge-addition stability theorem allows the nonuniqueness statement proved in the
triangulated graph to be pulled back to \(G\).

Combining these pieces gives the full interval.  The embedded tree gives
\(p_c^{\mathrm{site}}(G)<1/2\) and nonuniqueness around \(1/2\); the embedded
forest together with the connectivity criterion gives nonuniqueness in the
matching-graph decay regime; and the end-separation, polygon-counting, and
edge-addition stability arguments cover the remaining high-density regime up to
\(1-p_c^{\mathrm{site}}(G)\).

\subsection{Relation to Previous and Concurrent Work}

On quasi-transitive planar graphs, nonuniqueness phenomena can often be analyzed through duality and matching-graph identities; see, for example,~\cite{bsjams,LP16,ZL17,GrZL22,GrZL221}. The present paper addresses a different regime, where quasi-transitivity is absent and the main issue is to replace symmetry-based tools by explicit geometric constructions.

For properly embedded planar graphs with minimum degree at least $7$, Haslegrave and Panagiotis~\cite{HP19} proved the inequality
\begin{align}
p_c^{\mathrm{site}}(G)<\tfrac12.\label{pcl2}
\end{align}
Our contribution is a new proof of (\ref{pcl2}), and to establish the corresponding nonuniqueness statement throughout
\[
\bigl(p_c^{\mathrm{site}}(G),\,1-p_c^{\mathrm{site}}(G)\bigr)
\]
for this class of graphs.

After earlier versions of this work, Glazman, Harel and Zelesko~\cite{GHZ25}
proved a general \(0/\infty\) theorem for planar percolation processes. Their
theorem applies, in particular, to Bernoulli site percolation at parameters
\(p\le \tfrac12\). Consequently, together with \eqref{pcl2}, it recovers
nonuniqueness in the lower half of the interval,
\[
p\in\bigl(p_c^{\mathrm{site}}(G),\,\tfrac12\bigr].
\]
It does not, however, apply to Bernoulli site percolation in the high-density
range
\[
p\in\bigl(\tfrac12,\,1-p_c^{\mathrm{site}}(G)\bigr).
\]
The present paper proves nonuniqueness also in this upper half of the interval.

Subsequent work~\cite{ZL26} shows that Conjecture~7 is false in full
generality, while under a natural countability assumption on end-equivalence
classes one recovers the same full nonuniqueness interval. In contrast, the
present paper treats the properly embedded minimum-degree-\(7\) setting by a
more explicit geometric mechanism: embedded trees, planar separation, decay of
two-point connection probabilities in the face-completion graph, and stability
of infinite clusters under edge additions.

\subsection{Organization of the Paper}

Section~\ref{sbn} collects background material and notation. In Section~\ref{sect:tree} we construct the embedded tree and prove
\[
p_c^{\mathrm{site}}(G)<\tfrac12,
\]
together with nonuniqueness in the interval
\[
\bigl(p_c^{\mathrm{site}}(T),\,1-p_c^{\mathrm{site}}(T)\bigr).
\]
Section~\ref{sect:ccp} develops the general connectivity criterion for arbitrary locally finite graphs, including the lower bound on two-point connectivity under uniqueness and its nonuniqueness consequence. In Section~\ref{sect:planarg} we build the embedded-forest separation scheme and derive exponential decay of two-point connection probabilities in the matching graph; combined with the results of Section~\ref{sect:ccp}, this yields nonuniqueness in the intermediate regime. Section~\ref{sect:5} reduces the near-critical high-density regime to a polygon-counting hypothesis by analyzing end structure and infinite $0$-$*$-clusters. Section~\ref{sect:6} verifies this hypothesis for triangulations. Finally, Section~\ref{sect:up} introduces the binary-tree version of uniform percolation and a triangulation/stability argument, completing the proof of Theorem~\ref{mt1} for general properly embedded planar graphs.

\section{Background and Notation}\label{sbn}

Let $G=(V,E)$ be an infinite, connected graph. Let $\hat{G}=(V,\hat{E})$ be obtained from $G$ by
\begin{itemize}
\item removing loops; and
\item removing multiple edges while keeping exactly one edge between each unordered pair of distinct vertices.
\end{itemize}
It is straightforward to check that $p_c^{site}(G)=p_c^{site}(\hat{G})$. Hence, without loss of
generality, all graphs in this paper are assumed \emph{simple}.

A walk (or path) of $G$ is an alternating finite or infinite sequence $(\dots,v_0,e_0,v_1,e_1,\dots)$
with $e_i=\langle v_i,v_{i+1}\rangle$. Since our graphs are simple, we often refer to a walk by its
vertex sequence.

A walk is \emph{closed} if it can be written as $(v_0,v_1,\ldots,v_n)$ with $v_0=v_n$.
A walk is \emph{self-avoiding} if it visits no vertex more than once.
A \emph{cycle} (or \emph{simple cycle}) is a self-avoiding closed walk
$C=(v_0,v_1,\dots,v_n,v_0)$.

Once $G$ is embedded in $\RR^2$, a \emph{face} is a maximal connected component of $\RR^2\setminus G$.
Faces may be bounded or unbounded. A face is \emph{finite} if it is bounded and its boundary consists
of finitely many edges.

A \emph{ray} in a graph is a one-sided infinite self-avoiding path. If the graph is
embedded in \(\mathbb R^2\), we call a ray \emph{proper} if its embedded realization
eventually leaves every compact subset of \(\mathbb R^2\). In a properly embedded
locally finite graph, every ray is proper.

\begin{definition}\label{df64}
Let $G=(V,E)$ be an infinite, connected, locally finite, simple, planar graph and fix an embedding of
$G$ into the plane. The \textbf{matching graph} $G_*=(V,E_*)$ has the same vertex set as $G$, and for
distinct $u,v\in V$ we declare $\langle u,v\rangle\in E_*$ if and only if
\begin{enumerate}
\item $\langle u,v\rangle\in E$; or
\item $\langle u,v\rangle\notin E$, but $u$ and $v$ share a finite face in $G$.
\end{enumerate}
\end{definition}

\begin{definition}[Ends]
\label{def:ends}
Let \(G=(V,E)\) be an infinite connected graph. For a vertex set \(K\subset V\), write
\[
G\setminus K:=G[V\setminus K]
\]
for the induced subgraph obtained by deleting the vertices in \(K\) and all
edges incident to them.

An \emph{end} of \(G\) is a map
\(\psi\) assigning to every finite vertex set \(K\subset V\) an infinite
connected component \(\psi(K)\) of \(G\setminus K\), such that
\[
\psi(K)\subseteq \psi(K')
\qquad\text{whenever }K'\subseteq K .
\]
The number of ends of \(G\) is
\[
\sup_{K\subset V,\ |K|<\infty}
\#\{\text{infinite connected components of }G\setminus K\}.
\]
\end{definition}

Assume $G$ is connected and properly embedded in the plane. The boundary of a finite face is a closed
walk; its \emph{degree} $|f|$ is the number of steps of that walk. (A vertex or an edge may be visited
multiple times by the boundary walk, so $|f|$ may exceed the number of distinct boundary vertices.)

For $u,w\in V$ we write $u\sim w$ if $u$ and $w$ are adjacent. If $v$ is a vertex and $f$ is a face, we
write $v\sim f$ when $v$ lies on the boundary of $f$.

For $\omega\in\Omega$, we write $u \lra v$ if there exists an open path in $G$ with endpoints $u$ and $v$,
and $x\xleftrightarrow{A} v$ if such a path exists using only vertices in $A\subseteq V$. A similar
notation is used for the existence of infinite open paths. When the corresponding open paths exist in
the matching graph $G_*$, we use the notation $\xleftrightarrow{\ast}$.

\section{Embedded Trees}\label{sect:tree}

The goal of this section is to show that 
every infinite, connected graph properly embedded into the plane with vertex degree at least 7 
possesses a tree as a subgraph in which every vertex other than the root vertex has degree $3$ or $4$. We start with geometric properties of planar graphs.

\begin{definition}\label{df22}Let $G=(V,E)$ be a locally finite planar graph. Let $v\in V$ and $e_1,\ldots, e_d$ be all the incident edges of $v$ in cyclic order. For $1\leq i\leq d-1$, let $f_i$ be the face shared by $e_{i}$ and $e_{i+1}$. Let $f_d$ be the face shared by $e_d$ and $e_1$. Let $f:f\sim v$ denote all the faces $f_1,\ldots,f_d$; note that one face may appear multiple times in $f_1,\ldots,f_d$; in that case, it also appears multiple times in $f:f\sim v$.
Define the curvature $\kappa(v)$ at a vertex $v \in V$ to be
\begin{align*}
2\pi-\sum_{f:f\sim v}\frac{|f|-2}{|f|}\pi.
\end{align*}
\end{definition}

\begin{lemma}[Combinatorial Gauss--Bonnet for a cycle]\label{lem24}
Let $G$ be a locally finite graph properly embedded in $\mathbb R^2$.
Let $C$ be a simple cycle with $n$ vertices, and let $F_C$ be the set of (finite) faces
contained in the bounded component of $\mathbb R^2\setminus C$.
Let $V_C^\circ$ be the set of vertices strictly inside $C$.
Then
\begin{equation}\label{eq:GB-comb}
\sum_{z\in C\cap V}\ \sum_{f\in F_C:\, f\sim z}\frac{|f|-2}{|f|}\pi
= (n-2)\pi + \sum_{z\in V_C^\circ}\kappa(z).
\end{equation}
In particular, if $\kappa(z)\le 0$ for all $z\in V$, then
\begin{equation}\label{eq:GB-ineq}
\sum_{z\in C\cap V}\ \sum_{f\in F_C:\, f\sim z}\frac{|f|-2}{|f|}\pi
\le (n-2)\pi.
\end{equation}
\end{lemma}

\begin{proof}
Let $m:=|F_C|$. Let $s$ be the number of vertices strictly inside $C$ and let $t$ be the number
of edges strictly inside the bounded region enclosed by $C$.
Since each interior edge is incident to two faces in $F_C$ and each boundary edge of $C$ is
incident to exactly one face in $F_C$, we have
\[
\sum_{f\in F_C}|f| = 2t + n.
\]
Hence
\[
\sum_{f\in F_C}(|f|-2)\pi = (2t+n-2m)\pi.
\]
On the other hand,
\[
\sum_{f\in F_C}(|f|-2)\pi
= \sum_{z\in V_C^\circ}\sum_{f\in F_C:f\sim z}\frac{|f|-2}{|f|}\pi
  + \sum_{z\in C\cap V}\sum_{f\in F_C:f\sim z}\frac{|f|-2}{|f|}\pi.
\]
For $z\in V_C^\circ$, all faces incident to $z$ lie in $F_C$, hence by Definition~\ref{df22},
\[
\sum_{f\in F_C:f\sim z}\frac{|f|-2}{|f|}\pi = 2\pi-\kappa(z).
\]
Therefore
\[
\sum_{z\in C\cap V}\sum_{f\in F_C:f\sim z}\frac{|f|-2}{|f|}\pi
= (2t+n-2m)\pi - \sum_{z\in V_C^\circ}(2\pi-\kappa(z)).
\]
Finally, Euler's formula for the planar map inside $C$ gives
\[
(s+n) - (t+n) + m = 1 \quad\Rightarrow\quad t = s+m-1.
\]
Substitute this into the previous identity to get \eqref{eq:GB-comb}. The inequality
\eqref{eq:GB-ineq} follows immediately if $\kappa\le 0$.
\end{proof}

\begin{remark}
Recall that the hyperbolic plane $\HH^2$ has constant curvature $-1$. Let $C$ be a polygon in $\HH^2$ with vertex set $\{v_1,\ldots,v_n\}_{v_i\in \HH^2}$, and edges $\{\langle v_i,v_{i+1}\rangle\}_{1\leq i\leq n}$ ($v_{n+1}:=v_1$) such that $\langle v_i,v_{i+1}\rangle$ is the geodesic in $\HH^2$ joining $v_i$ and $v_{i+1}$. Let $R_C$ be the bounded region enclosed by $C$. Then by the Gauss-Bonnet formula of $\HH^2$ we have
\begin{align}
\sum_{i\in[n]}\mathrm{internal\ angle\ of}\ C\ \mathrm{at}\ v_i=(n-2)\pi-\mathrm{Area}(R_C)\leq (n-2)\pi.\label{gb}
\end{align}
In general computing the internal angle at each $v_i$ in (\ref{gb}) is not straightforward. However, (\ref{eq:GB-comb}) is expressed in terms of face degrees instead of internal angels; and appears to be easier to verify in various situations.
The Gauss-Bonnet formula was used to study random walks on planar graphs; see \cite{ZA97,WW98}.
\end{remark}

\begin{lemma}\label{l48}
Let $G=(V,E)$ be a properly embedded planar graph and assume $\kappa(v)\le 0$ for all $v\in V$.
Let $(v_0,v_1,\dots)$ be a (finite or infinite) walk in $G$.
For each index $n$ for which both $v_{n-1}$ and $v_{n+1}$ are defined, set
$e_n^-:=\langle v_{n-1},v_n\rangle$ and $e_n^+:=\langle v_n,v_{n+1}\rangle$.

Let $F_{L}(n)$ (resp.\ $F_{R}(n)$) be the \emph{multiset} of faces incident to $v_n$
encountered when turning from $e_n^-$ to $e_n^+$ around $v_n$ counterclockwise
(resp.\ clockwise), where faces are counted with multiplicity as in Definition~\ref{df22}.
Assume that for every such $n$,
\begin{equation}\label{gep}
\min\Big\{\sum_{f\in F_L(n)}\frac{|f|-2}{|f|}\pi\;,\;\sum_{f\in F_R(n)}\frac{|f|-2}{|f|}\pi\Big\}\;\ge\;\pi.
\end{equation}
Then the walk $(v_0,v_1,\dots)$ is self-avoiding.
\end{lemma}

\begin{proof}
Assume the walk is not self-avoiding.
Let $j$ be the smallest index such that $v_j$ has appeared before, and let $i<j$ satisfy $v_i=v_j$.
By minimality of $j$, the vertices $v_0,\dots,v_{j-1}$ are pairwise distinct, hence
\[
C := (v_i,v_{i+1},\dots,v_{j-1},v_j=v_i)
\]
is a simple cycle. Let $F_C$ be the set of faces in the bounded region enclosed by $C$.

Orient $C$ according to the traversal $v_i\to v_{i+1}\to\cdots\to v_{j-1}\to v_i$.
Fix $k$ with $i<k<j$ and set $z=v_k$.
At time $k$ the walk enters $z$ through $\langle v_{k-1},z\rangle$ and leaves through $\langle z,v_{k+1}\rangle$,
which are precisely the two edges of $C$ incident to $z$.
Therefore the faces of $F_C$ incident to $z$ are exactly the faces lying on the interior side of $C$ at $z$,
hence they coincide with either $F_L(k)$ or $F_R(k)$.
Using \eqref{gep} we obtain
\[
\sum_{f\in F_C: f\sim z}\frac{|f|-2}{|f|}\pi \;\ge\; \pi
\qquad\text{for every } z\in (C\cap V)\setminus\{v_i\}.
\]
Summing over all vertices of $C$ except $v_i$ yields
\begin{equation}\label{eq:lower}
\sum_{z\in (C\cap V)\setminus\{v_i\}}\ \ \sum_{f\in F_C:f\sim z}\frac{|f|-2}{|f|}\pi
\;\ge\; (|C|-1)\pi.
\end{equation}

On the other hand, by Lemma~\ref{lem24} (inequality \eqref{eq:GB-ineq}) applied to the cycle $C$ and the assumption $\kappa\le 0$,
\[
\sum_{z\in C\cap V}\sum_{f\in F_C:f\sim z}\frac{|f|-2}{|f|}\pi \;\le\; (|C|-2)\pi.
\]
Dropping the (nonnegative) contribution of the single vertex $v_i$ preserves the inequality, hence
\[
\sum_{z\in (C\cap V)\setminus\{v_i\}}\sum_{f\in F_C:f\sim z}\frac{|f|-2}{|f|}\pi \;\le\; (|C|-2)\pi,
\]
which contradicts \eqref{eq:lower} since $(|C|-1)\pi>(|C|-2)\pi$.
Therefore the walk must be self-avoiding.
\end{proof}

\begin{lemma}\label{lem:kappa-nonpos}
Assume every (finite) face has degree at least $k\ge 3$.
If $\deg(v)\ge d$ and $d\cdot \frac{k-2}{k}\ge 2$, then $\kappa(v)\le 0$.
In particular,
(i) if $\deg(v)\ge 6$ and $|f|\ge 3$ for all faces $f\sim v$, then $\kappa(v)\le 0$;
(ii) if $\deg(v)\ge 4$ and $|f|\ge 4$ for all faces $f\sim v$, then $\kappa(v)\le 0$;
(iii) if $\deg(v)\ge 7$ and $|f|\ge 3$ for all faces $f\sim v$, then $\kappa(v)< 0$.
\end{lemma}

Lemma \ref{l48} has the following straightforward corollaries.

\begin{corollary}\label{l49}Let $G=(V,E)$ be a planar graph properly embedded into $\RR^2$ such that each vertex degree is at least 6 and each face degree is at least 3.
For each $v\in V$, label the incident edges of $v$ by 0, 1,\ldots, $\mathrm{deg}(v)-1$ in counterclockwise order. 
Consider the following walk on $G$:
\begin{itemize}
    \item For any $1\leq n\leq \mathrm{length\ of\ the\ walk}-1$, if the walk visits vertex $v_n$ at the $n$th step, and the edge visited immediately before $v_n$ is the edge labelled by $a$, then the edge visited immediately after $v_n$ is the edge labelled by $[(a+3)\mod \mathrm{deg}(v)]$.
\end{itemize}
Then the walk is self-avoiding.

Similarly, the following walk is also self-avoiding:
\begin{itemize}
    \item For any $1\leq n\leq \mathrm{length\ of\ the\ walk}-1$, if the walk visits vertex $v_n$ at the $n$th step, and the edge visited immediately before $v_n$ is the edge labelled by $a$, then the edge visited immediately after $v_n$ is the edge labelled by $[(a-3)\mod \mathrm{deg}(v)]$.
\end{itemize}
\end{corollary}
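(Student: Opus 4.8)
The plan is to deduce both statements directly from Lemma \ref{l48}, so the real work is just to check that its two hypotheses hold for the walks described. First I would verify the non-positive curvature condition $\kappa(v)\le 0$. If $v$ has degree $d\ge 6$, then in the sense of Definition \ref{df22} there are exactly $d$ faces incident to $v$ (counted with multiplicity), and each has degree at least $3$, so $\frac{|f|-2}{|f|}\pi=\bigl(1-\tfrac{2}{|f|}\bigr)\pi\ge \tfrac{\pi}{3}$ for every such $f$. Summing over the $d$ corners at $v$ gives $\sum_{f:f\sim v}\frac{|f|-2}{|f|}\pi\ge \tfrac{d\pi}{3}\ge 2\pi$, hence $\kappa(v)=2\pi-\sum_{f:f\sim v}\frac{|f|-2}{|f|}\pi\le 0$. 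Thus the curvature hypothesis of Lemma \ref{l48} holds at every vertex of $G$.

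Next I would check condition (\ref{gep}) for the walk with turning rule $a\mapsto (a+3)\bmod \deg(v)$. Fix a step at which the walk sits at a vertex $v_n$ of degree $d$, having arrived along the edge labelled $a$ and about to leave along the edge labelled $a+3\pmod d$. Since $d\ge 6$ we have $a+3\not\equiv a\pmod d$, so the incoming and outgoing edges are distinct; together they split the cyclically ordered corner-faces at $v_n$ — the faces $f_0,\dots,f_{d-1}$ with $f_i$ the corner between the edges labelled $i$ and $i+1\pmod d$, as in Definition \ref{df22} — into two consecutive arcs. One arc, lying on one side of the walk, consists of the three corners $f_a,f_{a+1},f_{a+2}$; the other arc, on the other side, consists of the $d-3$ corners $f_{a+3},\dots,f_{a-1}$. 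Hence one of $F_{1,n},F_{2,n}$ contains $3$ faces and the other contains $d-3\ge 3$ faces, each contributing at least $\tfrac{\pi}{3}$ by the bound above, so $\sum_{f\in F_{i,n}}\frac{|f|-2}{|f|}\pi\ge\pi$ for $i=1,2$, and therefore $\min_{i\in\{1,2\}}\sum_{f\in F_{i,n}}\frac{|f|-2}{|f|}\pi\ge\pi$. This is precisely (\ref{gep}), so Lemma \ref{l48} applies and the walk is self-avoiding. The walk with rule $a\mapsto (a-3)\bmod \deg(v)$ is treated identically: the two arcs, of sizes $3$ and $d-3$, simply switch sides, and the same estimate holds.

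The only point that requires genuine care is the bookkeeping with orientations and labels: one must confirm that "label the incident edges in counterclockwise order" is consistent with the cyclic order of corner-faces used in Definition \ref{df22}, so that a turn by $\pm 3$ really does place exactly three corner-faces on one side of the walk and $d-3$ on the other, and that both of these families are nonempty. This last fact is exactly the role of the hypothesis $d\ge 6$, since it is equivalent to $\min(3,\,d-3)\ge 3$. No delicate estimate is involved beyond the elementary inequality $\frac{|f|-2}{|f|}\ge\frac13$ for $|f|\ge 3$; the corollary is just the observation that the hypotheses of Lemma \ref{l48} are automatically met.
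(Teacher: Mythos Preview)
Your proposal is correct and is exactly the approach the paper intends: the paper simply states that Corollary~\ref{l49} is a ``straightforward corollary'' of Lemma~\ref{l48} without spelling out details, and your verification of the two hypotheses (non-positive curvature from $d\ge 6$ and $|f|\ge 3$, and condition~(\ref{gep}) from having $3$ and $d-3\ge 3$ corner-faces on the two sides, each contributing at least $\pi/3$) is precisely what is needed to fill in that omission.
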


\begin{corollary}\label{cl27}Let $G=(V,E)$ be a planar graph, properly embedded into $\RR^2$ such that each vertex degree is at least 4 and each face degree is at least 4. 
For each $v\in V$, label the incident edges of $v$ by 0, 1,\ldots, $\mathrm{deg}(v)-1$ in counterclockwise order.
Consider the following walk on $G$:
\begin{itemize}
    \item For any $1\leq n\leq \mathrm{length\ of\ the\ walk}-1$, if the walk visits vertex $v_n$ at the $n$th step, and the edge visited immediately before $v_n$ is the edge labelled by $a$, then the edge visited immediately after $v_n$ is the edge labelled by $[(a+2)\mod \mathrm{deg}(v)]$.
\end{itemize}
Then the walk is self-avoiding.
\end{corollary}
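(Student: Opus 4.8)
The plan is to derive Corollary \ref{cl27} directly from Lemma \ref{l48}, so the only work is to verify (i) the non-positive curvature hypothesis of that lemma and (ii) the inequality \eqref{gep} at every vertex visited by the $(a\mapsto a+2)$ walk. Throughout I would use the elementary observation that $\frac{|f|-2}{|f|}=1-\frac{2}{|f|}$ is increasing in $|f|$, so the standing assumption $|f|\geq 4$ gives $\frac{|f|-2}{|f|}\pi\geq\frac{\pi}{2}$ for every face $f$.

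For (i): each vertex $v$ has exactly $\deg(v)$ incident face-corners (counted with multiplicity), and $\deg(v)\geq 4$, so $\sum_{f:f\sim v}\frac{|f|-2}{|f|}\pi\geq\deg(v)\cdot\frac{\pi}{2}\geq 2\pi$, whence $\kappa(v)\leq 0$; thus the curvature hypothesis of Lemma \ref{l48} holds. For (ii): suppose at step $n$ the walk is at a vertex $v_n$ of degree $d$, having entered along the edge labelled $a$ and leaving, by the stated rule, along the edge labelled $(a+2)\bmod d$. These two edges split the angular corners at $v_n$ into two arcs: the arc running counterclockwise from $a$ to $a+2$ contains exactly the intermediate edge labelled $a+1$, hence exactly two face-corners (the one between $a$ and $a+1$, and the one between $a+1$ and $a+2$), while the complementary arc contains the remaining $d-2\geq 2$ face-corners. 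So each of $F_{1,n}$ and $F_{2,n}$ consists of at least two face-corners, each contributing at least $\frac{\pi}{2}$, giving $\sum_{f\in F_{i,n}}\frac{|f|-2}{|f|}\pi\geq\pi$ for both $i\in\{1,2\}$; in particular \eqref{gep} holds at $v_n$. Since the rule is uniform in $n$, this holds at every step, and Lemma \ref{l48} then yields that the walk is self-avoiding, which is the assertion of the corollary.

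I do not anticipate a real obstacle here: the argument is a short counting-plus-monotonicity check once Lemma \ref{l48} is in hand. The only points needing mild care are the bookkeeping of how many face-corners sit on each side of the walk at $v_n$ — together with the harmless fact that a single face of $G$ may occupy two of those corners, which does not affect the lower bound because corners are counted with multiplicity in \eqref{gep} — and the observation that the hypothesis $\deg(v)\geq 4$ is exactly what is needed to guarantee that the ``far'' side of the walk also carries curvature at least $\pi$.
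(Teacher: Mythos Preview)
Your proposal is correct and follows exactly the approach the paper intends: the paper presents Corollary \ref{cl27} as a ``straightforward corollary'' of Lemma \ref{l48} without writing out a proof, and your verification of the curvature hypothesis and of \eqref{gep} via the face-corner count (two corners on each side, each worth at least $\pi/2$) is precisely what that one-line deduction unpacks to.
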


\begin{lemma}\label{lem28}
Let $G=(V,E)$ be an infinite, connected planar graph properly embedded into $\RR^2$
such that the minimal vertex degree is at least $7$.
Then every cycle consisting of $3$ edges bounds a degree-$3$ face.
\end{lemma}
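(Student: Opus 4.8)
The plan is to argue by contradiction using the curvature inequality \eqref{fc3} applied to a $3$-cycle. Suppose $C = (v_1, v_2, v_3, v_1)$ is a cycle of $G$ consisting of $3$ edges that does \emph{not} bound a degree-$3$ face; I want to derive a contradiction with the minimal-degree-$7$ hypothesis (which in particular gives $\kappa(v) \le 0$ at every vertex, since all face degrees are at least $3$). Write $R_C$ for the closed planar region bounded by $C$. The key quantity is the left-hand side of \eqref{fc3}, namely $\sum_{z \in C}\sum_{f \in R_C:\, f \sim z} \tfrac{|f|-2}{|f|}\pi$, which by \eqref{fc3} must be at most $(3-2)\pi = \pi$. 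So the heart of the matter is to show that if $C$ does not bound a single triangular face, then this sum strictly exceeds $\pi$, contradicting \eqref{fc3}.

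First I would set up the face count around the three boundary vertices carefully. At each $v_i$, the two edges of $C$ incident to $v_i$ divide the faces around $v_i$ into those inside $R_C$ and those outside; let $d_i \ge 1$ be the number of face-incidences at $v_i$ lying inside $R_C$ (counted with multiplicity as in Definition~\ref{df22}). Since $\deg(v_i) \ge 7$ and the two $C$-edges at $v_i$ account for exactly two of the incident edges, there are at least $\deg(v_i) - 1 \ge 6$ face-corners around $v_i$ in total; if the faces inside $R_C$ at $v_i$ number $d_i$, then the faces \emph{outside} number $\deg(v_i) - 1 - d_i$, wait — more precisely, going around $v_i$ there are $\deg(v_i)$ corners total, of which $\deg(v_i) - d_i$ lie outside $R_C$, and each side contributes at least one corner. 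The cheapest way for $C$ to fail to bound a degree-$3$ face is for the interior of $R_C$ to consist of exactly one face $f$ with $|f| > 3$; I would treat this ``innermost'' case first, and then reduce the general case to it.

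In the one-interior-face case, $\sum_{z\in C}\sum_{f\in R_C:f\sim z}\tfrac{|f|-2}{|f|}\pi = 3 \cdot \tfrac{|f|-2}{|f|}\pi$, which is at most $\pi$ only if $\tfrac{|f|-2}{|f|} \le \tfrac13$, i.e.\ $|f| \le 3$; so $|f| = 3$, contradicting $|f|>3$. For the general case where $R_C$ contains more than one face, I would observe that the boundary vertices $v_1,v_2,v_3$ each still have at least one interior face-corner, and since there is more than one face inside, at least one $v_i$ has $d_i \ge 2$ interior corners, or an interior vertex/edge is present; either way one can check the sum $\sum_{z\in C}\sum_{f\in R_C:f\sim z}\tfrac{|f|-2}{|f|}\pi$ is at least $3 \cdot \tfrac13 \pi = \pi$ with equality forcing every interior face-corner to belong to a triangle and $d_1=d_2=d_3=1$, which again forces a single triangular face. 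The clean route, I expect, is to invoke \eqref{fc3} directly: it says the sum is $\le \pi$; since each summand $\tfrac{|f|-2}{|f|}\pi \ge \tfrac13\pi$ (as $|f|\ge 3$) and there are at least three terms (one interior corner per boundary vertex), the sum is $\ge \pi$, hence $=\pi$, hence there are exactly three interior corners and each has $|f|=3$; three corners around a triangle with each corner on a triangular face, with no interior vertices or edges, forces that triangular face to be the unique face of $R_C$, so $C$ bounds a degree-$3$ face.

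The main obstacle, I anticipate, is the combinatorial bookkeeping needed to guarantee ``at least three interior face-corners, one per boundary vertex'' even when $C$ is not assumed to be the boundary of a face and when $R_C$ may a priori be complicated (non-$2$-connected pieces, repeated vertices on inner faces, etc.). Establishing that each $v_i$ contributes at least one distinct face-corner inside $R_C$, and that equality in the resulting bound propagates to ``single triangular face, no interior structure,'' is the delicate part; the minimal degree $\ge 7$ is what makes the curvature hypothesis $\kappa\le 0$ available so that \eqref{fc3} can be used at all, and it also rules out degenerate small-degree configurations. Everything else is a direct substitution into \eqref{fc3}.
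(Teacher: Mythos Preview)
Your proposal is correct and takes essentially the same approach as the paper: both apply \eqref{fc3} to the $3$-cycle, use $\tfrac{|f|-2}{|f|}\ge\tfrac13$, and count interior face-corners at the three boundary vertices. The only difference is cosmetic---the paper argues by direct contradiction (if the interior of $R_C$ is nonempty, connectedness of $G$ forces some boundary vertex to have $\ge 2$ interior corners, hence $\ge 4$ corners total and sum $\ge\tfrac{4\pi}{3}>\pi$), whereas your ``clean route'' squeezes $\pi\le\text{sum}\le\pi$ and reads off the conclusion from equality; your intermediate case split is unnecessary once you have that route, but not wrong.
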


\begin{proof}
Let $C$ be a $3$-cycle with vertices $a,b,c$.
By Lemma~\ref{lem:kappa-nonpos}(iii), we have $\kappa(v)\le 0$ for all $v\in V$.

Let $F_C$ be the set of faces in the bounded region enclosed by $C$.
Assume for contradiction that the bounded region enclosed by $C$ contains a vertex or an edge of $G$.
Then at least one of $a,b,c$ is incident to at least two faces in $F_C$, while each of the other two vertices
is incident to at least one face in $F_C$.

Since every finite face has degree at least $3$, we have $\frac{|f|-2}{|f|}\pi\ge \frac{\pi}{3}$.
Therefore
\[
\sum_{z\in C\cap V}\sum_{f\in F_C:f\sim z}\frac{|f|-2}{|f|}\pi
\;\ge\; \frac{2\pi}{3}+\frac{\pi}{3}+\frac{\pi}{3}
\;=\;\frac{4\pi}{3}\;>\;\pi \;=\; (|C|-2)\pi,
\]
which contradicts \eqref{eq:GB-ineq} applied to $C$.
Hence the bounded region enclosed by $C$ contains no vertices or edges of $G$, so $C$ is the boundary of a face,
and that face has degree $3$.
\end{proof}

In the discussions above, we do not exclude the case that the boundary of a finite face is a closed walk visiting one vertex multiple times. The following lemma excludes the case under further assumptions on minimal vertex and face degrees.

\begin{lemma}\label{lc29}
Let $G=(V,E)$ be an infinite, connected planar graph properly embedded into $\RR^2$
such that $\kappa(v)\leq 0$ for all $v\in V$.
Then the boundary of every finite face is a cycle.
\end{lemma}

\begin{proof}
Let $f$ be a finite face of $G$. Let $U_f$ be the unbounded component of $\RR^2\setminus \partial f$ and let
$C:=\partial U_f$, which is a cycle.
If $\partial f\neq C$, then $|f|>|C|$.

Let $F_C$ be the set of faces in the bounded region enclosed by $C$.
Since every edge of $C$ lies on the boundary of $f$, the face $f$ contributes at least once at each vertex of $C$
(with multiplicity in the sense of Definition~\ref{df22}), hence
\[
\sum_{z\in C\cap V}\sum_{g\in F_C:g\sim z}\frac{|g|-2}{|g|}\pi
\;\ge\; |C|\cdot \frac{|f|-2}{|f|}\pi.
\]
Because the function $x\mapsto \frac{x-2}{x}$ is increasing for $x>0$ and $|f|>|C|$, we have
\[
|C|\cdot \frac{|f|-2}{|f|}\pi \;>\; |C|\cdot \frac{|C|-2}{|C|}\pi \;=\; (|C|-2)\pi,
\]
which contradicts \eqref{eq:GB-ineq} applied to $C$.
Therefore $\partial f=C$, i.e.\ the boundary of $f$ is a cycle.
\end{proof}

Under the assumptions of Lemma \ref{lc29}, the degree $|f|$ of a finite face $f$ is equal to the number of vertices on the boundary of the face which is the same as the number of edges on the boundary of the face. We shall next construct embedded trees on graphs satisfying the assumptions of Lemma \ref{lc29}, which is crucial to the proof of Conjecture \ref{c11}.

\begin{lemma}\label{l59}Let $G=(V,E)$ be an infinite, connected, planar graph, properly embedded into $\RR^2$ such that the minimal vertex degree is at least 7. 
Then at any vertex $v\in V$ there exists a tree $T=(V_{T},E_{T})$ rooted at $v$ and embedded into $G$ such that
\begin{itemize}
    \item the root vertex of $T$ has degree 2; all the other vertices of $T$ have degree 3 or 4;
    \item For each $\si\in V_T$, let $U_{v,\si}$ be all the vertices in the components of $T\setminus \{\si\}$ that do not contain the root $v$. Let $T_{v,\si}$ be the subgraph of $T$ induced by $\{\si\}\cup U_{v,\si}$. Then for each $n\geq 1$, among all the vertices of $T_{v,\si}$ with distance $n$ to $\si$, all of them have degree at least 3 and at least $\frac{1}{3}$ of them have degree 4.
    \item $V_T\subset V$ and $E_{T}\subset{E}$.
\end{itemize}
\end{lemma}

See Figure \ref{fig:f21} for an example of a tree embedded into the degree 7 triangular tilings of the hyperbolic plane (i.e., a vertex transitive graph $G$ drawn in $\HH^2$ such that each vertex has degree 7 and each face has degree 3) satisfying the conditions of Lemma \ref{l59}.

\begin{figure}
\centering
\begin{tikzpicture}
\draw[gray, thick] (0,0) -- (1,0);
\draw[gray, thick] (0.6235,-0.7818)--(0,0) -- (0.6235,0.7818);
\draw[red, thick] (0.6235,-0.7818)--(0,0);
\draw[gray, thick] (1,0) -- (0.6235,0.7818)--(-0.2225,0.9749)--(-0.9010,0.4339)--(-0.9010,-0.4339)--(-0.2225,-0.9749)--(0.6235,-0.7818)--(1,0);
\draw[red, thick]  (0.6235,0.7818)--(-0.2225,0.9749);

\draw[red, thick] (-0.9010,0.4339)--(-0.9010,-0.4339);
\draw[red, thick] (-0.2225,-0.9749)--(0,0);
\draw[red, thick]  (0,0)--(-0.2225,0.9749);
\draw[gray, thick] (-0.9010,-0.4339)--(0,0) -- (-0.9010,0.4339);
\draw[gray, thick] (1,0) -- (1.1940,0.5972);
\draw[red, thick] (1.1940,0.5972)
--(0.6235,0.7818);
\draw[gray, thick] (1,0) -- (1.5657,0.2724)--(1.1940,0.5972);
\draw[gray, thick] (1,0) -- (1.5879,-0.2206)--(1.5657,0.2724);
\draw[gray, thick] (1,0) --(1.2468,-0.5774) --(1.5879,-0.2206);
\draw[gray, thick] (0.6235,-0.7818)--(1.2468,-0.5774)--(1.2020,-1.0911);
\draw[red,thick]
(1.2020,-1.0911)--(0.6235,-0.7818);
\draw[gray, thick] (1.2020,-1.0911)--(0.7997,-1.4136);
\draw[red, thick]
(0.7997,-1.4136)--(0.6235,-0.7818);
\draw[gray, thick] (0.7997,-1.4136)--(0.2886,-1.3458);
\draw[red, thick]
(0.2886,-1.3458)--(0.6235,-0.7818);
 \draw[gray, thick] (0.2886,-1.3458)
--(-0.2225,-0.9749);
\draw[gray, thick] (0.2886,-1.3458)--(-0.1374,-1.6006);
\draw[red,thick]
(-0.1374,-1.6006)--(-0.2225,-0.9749);
\draw[gray, thick] (-0.1374,-1.6006)--
(-0.6160,-1.4688);
\draw[red, thick]
(-0.6160,-1.4688)--(-0.2225,-0.9749);
\draw[gray, thick] (-0.6160,-1.4688)--(-0.8514,-1.0318)--(-0.2225,-0.9749);
\draw[red, thick] (-0.9010,-0.4339)--(-0.8514,-1.0318);
\draw[red, thick]
(-0.9010,-0.4339)--(-1.2945,-0.8848);
\draw[gray, thick]
(-1.2945,-0.8848)--(-0.8514,-1.0318);
\draw[gray, thick] (-1.2945,-0.8848)--(-1.4945,-0.4629);
\draw[red,thick]
(-1.4945,-0.4629)--(-0.9010,-0.4339);
\draw[gray, thick] (-1.4945,-0.4629)--(-1.3278,-0.0268)--(-0.9010,-0.4339);
\draw[red, thick] (-1.3278,-0.0268)--(-0.9010,0.4339);
\draw[gray, thick] (-1.3278,-0.0268)--(-1.5289,0.4240);
\draw[gray,thick]
(-1.5289,0.4240)--(-0.9010,0.4339);
\draw[gray, thick] (-1.5289,0.4240)--(-1.3421,0.8809)--(-0.9010,0.4339);
\draw[gray, thick] (-1.3421,0.8809)--(-0.8827,1.0616);
\draw[red,thick]
(-0.8827,1.0616)--(-0.9010,0.4339);
\draw[red, thick] (-0.8827,1.0616)--(-0.2225,0.9749);
\draw[gray, thick] (-0.8827,1.0616)--(-0.6161,1.5120)--(-0.2225,0.9749);
\draw[gray, thick] (-0.6161,1.5120)--(-0.1062,1.6305)--(-0.2225,0.9749);
\draw[gray, thick] (-0.1062,1.6305)--(0.3317,1.3440);
\draw[red, thick]
(0.3317,1.3440)--(-0.2225,0.9749);
\draw[gray, thick] (0.3317,1.3440)--(0.6235,0.7818);
\draw[red, thick] (0.3317,1.3440)--(0.8282,1.3812);
\draw[gray, thick]
(0.8282,1.3812)--(0.6235,0.7818);
\draw[gray, thick] (0.8282,1.3812)--(1.1982,1.0480);
\draw[red,thick]
(1.1982,1.0480)--(0.6235,0.7818);
\draw[gray, thick] (1.1982,1.0480)--(1.1940,0.5972);
\draw[gray, thick] (-1.5286,0.4240)--(-1.7498,0.2292);
\draw[gray, thick]
(-1.7498,0.2292)--(-1.3278,-0.0268);
\draw[gray, thick] (-1.7498,0.2292)--(-1.8205,-0.0567);
\draw[red,thick]
(-1.8205,-0.0567)--(-1.3278,-0.0268);
\draw[gray, thick] (-1.8205,-0.0567)--(-1.7158,-0.3320);
\draw[red,thick]
(-1.7158,-0.3320)--(-1.3278,-0.0268);
\draw[gray, thick] (-1.7158,-0.3320)--(-1.4945,-0.4629);
\draw[gray, thick] (0.8282,1.3812)--(0.7186,1.6573);
\draw[red, thick]
(0.7186,1.6573)--(0.3317,1.3440);
\draw[gray, thick] (0.7186,1.6573)--(0.4713,1.8219);
\draw[red, thick]
(0.4713,1.8219)--(0.3317,1.3440);
\draw[gray, thick] (0.4713,1.8219)--(0.1743,1.8164)--(0.3317,1.3440);
\draw[gray, thick] (0.1743,1.8164)--(-0.1062,1.6305);
\end{tikzpicture}
\caption{Tree embedding in a degree-7 triangular tiling of the hyperbolic plane: the degree-7 triangular tiling is represented by black lines, and the embedded tree is represented by red lines.}\label{fig:f21}
\end{figure}
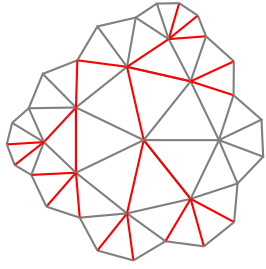

\begin{proof} Let $G$ be a graph satisfying condition (1) of Lemma \ref{l59}. We will find a tree as a subgraph of $G$ recursively.

Let $v\in V$. Let $v_{0}$, $v_{1}$ be two vertices adjacent to $v$ in $G$ such that $v$, $v_{0}$, $v_{1}$ share a face. Starting from $v,v_0$ construct a walk
\begin{align*}
\pi_0:=v,v_{0},v_{00},v_{000},\ldots,
\end{align*}
Starting from $v,v_1$ construct a walk 
\begin{align*}
\pi_1:=v,v_1,v_{11},v_{111},\ldots,
\end{align*}
such that
\begin{itemize}
    \item moving along $v_0,v,v_1$ in order, the face shared by $v_0,v,v_1$ is on the right; and
    \item moving along the walk $\pi_0$ starting from $v$, at each vertex $v_{0^k}$ ($k\geq 1$), there are exactly 3 incident faces on the right of $\pi_0$; and
    \item moving along the walk $\pi_1$ starting from $v$, at each vertex $v_{1^k}$ ($k\geq 1$), there are exactly 3 incident faces on the left of $\pi_1$.
\end{itemize}
By the assumption that each vertex has degree at least 7 and Corollary \ref{l49}, both $\pi_0$ and $\pi_1$ are infinite and self-avoiding.

Let
\begin{align*}
\pi_{0,0}:&=\pi_0\setminus \{v\}=v_0,v_{00},v_{000},\ldots\\
   \pi_{1,1}:&=\pi_1\setminus \{v\}=v_1,v_{11},v_{111},\ldots
\end{align*}

There exists $v_{01}\in V$ such that
\begin{itemize}
    \item $v_{01}$ is adjacent to $v_0$; and
    \item $v_0,v_{00},v_{01}$ share a face on the left of the walk $\pi_0$.
\end{itemize}
Similarly, there exist $v_{10},v_{1,\frac{1}{2}}\in V$ such that
\begin{itemize}
    \item both $v_{10}$ and $v_{1,\frac{1}{2}}$ are adjacent to $v_1$; and
    \item $v_1,v_{1,\frac{1}{2}},v_{11}$ share a face on the right of the walk $\pi_1$; and
    \item $v_1,v_{10},v_{1,\frac{1}{2}}$ share a face; moving along $v_{10},v_1,v_{1,\frac{1}{2}}$ in order, the face is on the right.
\end{itemize}
Note that $v_{01}\neq v$ and $v_{10}\neq v$, $v_{1,\frac{1}{2}}\neq v$ since each vertex in $G$ has degree at least 7.

Starting from $v_0,v_{01}$, construct a walk
\begin{align*}
    \pi_{01}:=v_0,v_{01},v_{011},v_{0111},\ldots
\end{align*}
Starting from $v_1,v_{10}$, construct a walk
\begin{align*}
    \pi_{10}:=v_1,v_{10},v_{100},v_{1000},\ldots
\end{align*}
Assume that
\begin{itemize}
    \item  moving along $v_{00},v_{0},v_{01}$ in order, the face shared by $v_{00},v_{0},v_{01}$ is on the right; and
    \item moving along the walk $\pi_{01}$ starting from $v_0$, at each vertex $v_{01^k}$ ($k\geq 1$), there are exactly 3 incident faces on the left of $\pi_{01}$; and
    \item moving along the walk $\pi_{10}$ starting from $v$, at each vertex $v_{10^k}$ ($k\geq 1$), there are exactly 3 incident faces on the right of $\pi_{10}$.
\end{itemize}
By Corollary \ref{l49}, both walks are infinite and self-avoiding. Furthermore, let
\begin{align*}
    \tilde{\pi}_{01}:=v,\pi_{01};\qquad 
    \tilde{\pi}_{10}:=v,\pi_{10}
\end{align*}
By Corollary \ref{l49}, $\tilde{\pi}_{01}$ is self-avoiding.

We claim that $\tilde{\pi}_{10}$ is self-avoiding. Assume $\tilde{\pi}_{10}$ is not self-avoiding; we shall obtain a contradiction. Since $\pi_{10}$ is self-avoiding, if $\tilde{\pi}_{10}$ is not self-avoiding, we can find a cycle $P_0$ consisting of vertices of $\tilde{\pi}_{10}$ including $v$.
Assume the cycle $P_0$ has exactly $m$ vertices denoted by $w_1,\ldots,w_m$; then we must have (\ref{eq:GB-ineq}) holds with $C$ replaced by $P_0$, $n$ replaced by $m$ and $V_i$ replaced by $w_i$.

 Under the assumption that each vertex has degree at least 7 and each face has degree at least 3, $\kappa(z)<0$; and 
\begin{align}
    \frac{|f|-2}{|f|}\geq \frac{1}{3}\label{dflb}
\end{align}
Note that each vertex along $P_0$ except $v$ and $v_1$ is incident to at least 3 faces in the bounded region $R_{P_0}$ enclosed $P_0$; $v$ is incident to at least 1 face in $R_{P_0}$ and $v_1$ is incident to at least 2 faces in $R_{P_0}$. Then we have
\begin{align*}
    \sum_{z\in V\cap P_0}\sum_{f\in F_{P_0}:f\sim z}\frac{|f|-2}{|f|}\pi\geq (m-2)\pi+\sum_{f\in F_{P_0}:f\sim v,  \mathrm{or}\ f\sim v_1}\frac{|f|-2}{|f|}\pi\geq (m-2)\pi+\pi>(m-2)\pi,
\end{align*}
which contradicts (\ref{eq:GB-ineq}), and therefore $\tilde{\pi}_{10}$ is self-avoiding.

We claim that $\pi_{01}$ and $\pi_{10}$ never intersect each other. Otherwise let $w\in V\cap \pi_{01}\cap \pi_{10}$ be an intersection vertex of $\pi_{01}$ and $\pi_{10}$, such that the portion of $\pi_{01}$ between $v_0$ and $w$, the portion of $\pi_{10}$ between $v_1$ and $w$ and the edges $v_0v$, $v_1v$ form a cycle $P$ in the plane. Assume the cycle $P$ has exactly $n$ vertices; then we must have (\ref{eq:GB-ineq}) holds.  Under the assumption that each vertex has degree at least 7 and each face has degree at least 3, $\kappa(z)<0$; we have (\ref{dflb}).

The following cases might occur
\begin{enumerate}[label=(\alph*)]
\item $w\neq v_0$ and $w\neq v_1$.
Under the assumption that $v_0$ and $v_1$ have degrees at least 7, $v_0$ is incident to at least 3 faces in $F_P$ and $v_1$ is incident to at least 2 faces in $F_P$; where $F_P$ is the set of faces in the bounded region enclosed by $P$.

Note that each vertex along $P$ except $v$, $w$ and $v_1$ are incident to at least 3 faces in $R_P$; $v$ and $w$ are incident to at least 1 face in $R_P$ and $v_1$ is incident to at least 2 faces in $R_P$. Then we have
\begin{align}
   \label{fii} \sum_{z\in V\cap P}\sum_{f\in F_P:f\sim z}\frac{|f|-2}{|f|}\pi\geq (n-3)\pi+\sum_{f\in F_P:f\sim v, \mathrm{or}\ f\sim w,\ \mathrm{or}\ f\sim v_1}\frac{|f|-2}{|f|}\pi\geq (n-3)\pi+\frac{4\pi}{3}>(n-2)\pi.
\end{align}
 \item $w=v_0$. (\ref{fii}) still holds.
\item $w=v_1$. Each vertex along $P$ except $v$, $w$ are incident to at least 3 faces in $R_P$; $v$ and $w$ are incident to at least 1 face in $F_P$ and $v_1$.Then we have
\begin{align*}
 \sum_{z\in V\cap P}\sum_{f\in F_P:f\sim z}\frac{|f|-2}{|f|}\pi\geq (n-2)\pi+\sum_{f\in F_P:f\sim v, \mathrm{or}\ f\sim w,\ }\frac{|f|-2}{|f|}\pi\geq (n-2)\pi+\frac{2\pi}{3}>(n-2)\pi.
\end{align*}
\end{enumerate}
Hence (\ref{eq:GB-comb}) never holds, and therefore $\pi_{01}$ and $\pi_{10}$ are disjoint.

We repeat the same construction with $(v_0,v,v_1)$ replaced by $(v_{00},v_0,v_{01})$.

Starting from $v_1,v_{1,\frac{1}{2}}$, we construct two walks
\begin{align*}
&\pi_{1,\frac{1}{2},0} :=   v_{1},v_{1,\frac{1}{2}}, v_{1,\frac{1}{2},0},v_{1,\frac{1}{2},0,0},\ldots\\
&\pi_{1,\frac{1}{2},1} :=   v_{1},v_{1,\frac{1}{2}}, v_{1,\frac{1}{2},1},v_{1,\frac{1}{2},1,1},\ldots
\end{align*}
such that
\begin{itemize}
\item moving along the walk $\pi_{1,\frac{1}{2},0}$ staring from $v_1$, at each vertex $\pi_{1,\frac{1}{2},0^k}$ ($k\geq 0$), there are exactly 3 incident faces on the right.
    \item moving along the walk $\pi_{1,\frac{1}{2},1}$ staring from $v_1$, at each vertex $\pi_{1,\frac{1}{2},1^k}$ ($k\geq 0$), there are exactly 3 incident faces on the left.
\end{itemize}
Let
\begin{align*}
    \tilde{\pi}_{1,\frac{1}{2},1}:=\{v\}\cup\pi_{1,\frac{1}{2},1};\qquad
    \tilde{\pi}_{1,\frac{1}{2},0}:=\{v\}\cup\pi_{1,\frac{1}{2},0}
\end{align*}
By Corollary~\ref{l49}, both
\(\widetilde\pi_{1,\frac12,1}\) and
\(\widetilde\pi_{1,\frac12,0}\) are infinite and self-avoiding. We next record
the separation property that will be used in the recursive construction.

\begin{claim}
\label{cl:tree-local-separation}
With the notation above,
\[
\begin{aligned}
&\pi_1\cap\pi_{10}=\{v_1\},\qquad
  \pi_1\cap\pi_{1,\frac12,\varepsilon}=\{v_1\},\qquad
  \pi_{10}\cap\pi_{1,\frac12,\varepsilon}=\{v_1\},
  \quad \varepsilon\in\{0,1\},\\
&\pi_{1,\frac12,0}\cap\pi_{1,\frac12,1}
  =\{v_1,v_{1,\frac12}\}.
\end{aligned}
\]
Moreover,
\[
\bigl(\pi_{1,\frac12,0}\cup\pi_{1,\frac12,1}\bigr)
\cap
\bigl(\pi_0\cup\pi_{01}\bigr)=\varnothing .
\]
\end{claim}

\begin{proof}
For a finite face \(f\), write
\[
w(f):=\frac{|f|-2}{|f|}\pi .
\]
Since every finite face has degree at least \(3\), we have \(w(f)\ge \pi/3\).
Also, by Lemma~\ref{lem:kappa-nonpos}, \(\kappa\le 0\), so the
Gauss--Bonnet inequality~\eqref{eq:GB-ineq} applies to every simple cycle.

We first prove the displayed intersection identities. Take two of the paths
\[
\pi_1,\quad \pi_{10},\quad \pi_{1,\frac12,0},\quad
\pi_{1,\frac12,1}.
\]
Their prescribed common initial part is either the single vertex \(v_1\), or,
for the pair
\(\pi_{1,\frac12,0},\pi_{1,\frac12,1}\), the edge
\(\langle v_1,v_{1,\frac12}\rangle\). Suppose that, after this prescribed
common initial part, the two paths meet again. Let \(x\) be the first such
meeting point, and let \(a\) be the last vertex of the prescribed common initial
part. The two initial subpaths from \(a\) to \(x\) form a simple cycle \(C\).

At every vertex of \(C\) other than \(a\) and \(x\), the cycle follows one of
the walks constructed by the three-face turning rule. Hence the bounded side of
\(C\) contains at least three incident face-sectors at that vertex: on the
prescribed side this is exactly the construction, while on the opposite side it
contains at least \(\deg(z)-3\ge 4\) face-sectors. Thus every such vertex
contributes at least \(\pi\) to
\[
\sum_{z\in C\cap V}\sum_{f\in F_C:f\sim z} w(f).
\]
The two exceptional vertices \(a\) and \(x\) are each incident to at least one
face in \(F_C\), and therefore contribute together at least \(2\pi/3\). Hence
\[
\sum_{z\in C\cap V}\sum_{f\in F_C:f\sim z} w(f)
\ge (|C|-2)\pi+\frac{2\pi}{3}
>
(|C|-2)\pi,
\]
contradicting~\eqref{eq:GB-ineq}. This proves the first displayed statement.

It remains to prove the disjointness from \(\pi_0\cup\pi_{01}\). Let
\(P\) be one of \(\pi_{1,\frac12,0}\) and \(\pi_{1,\frac12,1}\).

Suppose first that \(P\cap\pi_0\neq\varnothing\). Let \(x\) be the first
intersection, chosen so that the subpaths \(P[v_1,x]\) and \(\pi_0[v,x]\) are
internally disjoint. Then
\[
C:=P[v_1,x]\cup \langle v_1,v\rangle\cup \pi_0[v,x]
\]
is a simple cycle. As above, all non-exceptional vertices contribute at least
\(\pi\). The only exceptional vertices are \(v_1,v,x\). By the cyclic choice of
the edges at \(v_1\), the bounded side of \(C\) contains at least two
face-sectors at \(v_1\), and it contains at least one face-sector at each of
\(v\) and \(x\). Thus the exceptional contribution is at least
\[
\frac{2\pi}{3}+\frac{\pi}{3}+\frac{\pi}{3}
=
\frac{4\pi}{3}>\pi.
\]
Therefore
\[
\sum_{z\in C\cap V}\sum_{f\in F_C:f\sim z} w(f)
>
(|C|-2)\pi,
\]
again contradicting~\eqref{eq:GB-ineq}.

Now suppose that \(P\cap\pi_{01}\neq\varnothing\). Let \(x\) be the first
intersection, chosen so that the relevant initial subpaths are internally
disjoint. Then
\[
C:=P[v_1,x]\cup \langle v_1,v\rangle\cup \langle v,v_0\rangle
   \cup \pi_{01}[v_0,x]
\]
is a simple cycle. Again, all non-exceptional vertices contribute at least
\(\pi\). The exceptional vertices are contained in
\(\{v_1,v,v_0,x\}\). At \(v_1\) the bounded side contains at least two
face-sectors; at \(v\) and \(x\) it contains at least one face-sector; and at
\(v_0\) it contains at least three face-sectors by the way \(\pi_{01}\) branches
from the left side of \(\pi_0\). Hence the exceptional contribution is at least
\[
\frac{2\pi}{3}+\frac{\pi}{3}+\pi+\frac{\pi}{3}
=
\frac{7\pi}{3}>2\pi.
\]
Consequently,
\[
\sum_{z\in C\cap V}\sum_{f\in F_C:f\sim z} w(f)
>
(|C|-2)\pi,
\]
contradicting~\eqref{eq:GB-ineq}. This proves the claim.
\end{proof}

We now define the recursive tree. Set \(v_\varnothing:=v\). The level-\(1\)
vertices are \(v_0,v_1\), and the level-\(2\) vertices are
\[
v_{00},\ v_{01},\ v_{10},\ v_{1,\frac12},\ v_{11}.
\]
For \(k\ge 2\), define the set \(S_k\) of level-\(k\) vertices by
\[
S_k:=
\left\{
v_b:\ b=(b_1,\ldots,b_k)\in\left\{0,\frac12,1\right\}^k,\ 
\text{and if } b_j=\frac12,\text{ then } j\ge2 \text{ and } b_{j-1}=1
\right\}.
\]
Assume that all level-\(k\) vertices have been defined. For each
\(v_b\in S_k\), with \(b=(b_1,\ldots,b_k)\), we attach descendants according to
the following rules.

\begin{itemize}
\item If \(b_k=0\), define two paths \(\pi_{b,0}\) and \(\pi_{b,1}\) exactly as
\(\pi_{00}\) and \(\pi_{01}\), with the oriented edge
\((v,v_0)\) replaced by
\((v_{b_1,\ldots,b_{k-1}},v_b)\).

\item If \(b_k=1\), define the paths
\(\pi_{b,0}\), \(\pi_{b,\frac12}\), and \(\pi_{b,1}\) exactly as
\(\pi_{10}\), \(\pi_{1,\frac12}\), and \(\pi_{11}\), with the oriented edge
\((v,v_1)\) replaced by
\((v_{b_1,\ldots,b_{k-1}},v_b)\).

\item If \(b_k=\frac12\), define two paths \(\pi_{b,0}\) and \(\pi_{b,1}\)
exactly as \(\pi_{00}\) and \(\pi_{01}\), with the oriented edge
\((v,v_0)\) replaced by
\((v_{b_1,\ldots,b_{k-1}},v_b)\).
\end{itemize}

The proof of Claim~\ref{cl:tree-local-separation} is invariant under this
relabelling of the initial oriented edge. Hence, by induction on the level, each
newly added path meets the previously constructed graph only in its prescribed
initial vertex, except that the two paths issued from a \(\frac12\)-branch share
their prescribed first edge. Therefore the union of all constructed paths is a
connected acyclic subgraph of \(G\); denote it by \(T\).

Equivalently, the vertices of \(T\) are precisely
\[
\{v_\varnothing\}\cup\bigcup_{k\ge1} S_k,
\]
and the edges of \(T\) are the edges of \(G\) appearing in the constructed
paths. Thus \(V_T\subset V\) and \(E_T\subset E\). The root \(v\) has exactly
two children, while every other vertex has either two or three children. More
precisely, a vertex whose word ends in \(1\) has three children and hence degree
\(4\) in \(T\); a vertex whose word ends in \(0\) or \(\frac12\) has two
children and hence degree \(3\) in \(T\).

It remains only to verify the quantitative degree assertion. Fix a vertex
\(\sigma\in V_T\), and let \(A_n(\sigma)\) be the set of descendants of
\(\sigma\) at distance \(n\) from \(\sigma\) in the forward subtree \(T_{v,\sigma}\).
Let \(B_n(\sigma)\subseteq A_n(\sigma)\) be the subset of vertices whose words
end in \(1\). These are exactly the vertices of degree \(4\) at level \(n\).
For \(n=1\), the construction gives
\[
|B_1(\sigma)|\ge \frac13 |A_1(\sigma)|.
\]
Indeed, the children are either of types \(0,1\) or of types
\(0,\frac12,1\).

For the induction step, every vertex in \(A_n(\sigma)\) has one child of type
\(0\) and one child of type \(1\), and the vertices in \(B_n(\sigma)\) have in
addition one child of type \(\frac12\). Hence
\[
|A_{n+1}(\sigma)|=2|A_n(\sigma)|+|B_n(\sigma)|
\le 3|A_n(\sigma)|,
\]
while
\[
|B_{n+1}(\sigma)|=|A_n(\sigma)|.
\]
Therefore
\[
|B_{n+1}(\sigma)|
=
|A_n(\sigma)|
\ge
\frac13 |A_{n+1}(\sigma)|.
\]
By induction, at least one third of the vertices at every positive level in
\(T_{v,\sigma}\) have degree \(4\), and all vertices at those levels have degree
at least \(3\). This proves all parts of Lemma~\ref{l59}.
\end{proof}

\begin{proposition}\label{l510}
Let $G=(V,E)$ be an infinite, connected planar graph properly embedded in $\mathbb R^2$
with minimal vertex degree at least 7. Let $T\subseteq G$ be the embedded rooted tree given by
Lemma~\ref{l59}. Set $p_T:=p_c^{\mathrm{site}}(T)$.
Then for every
\[
p\in (p_T,\ 1-p_T),
\]
$\PP_p$-a.s.\ there exist infinitely many infinite $1$-clusters and infinitely many
infinite $0$-clusters in $G$.
\end{proposition}

\begin{proof}
\textbf{Step 1: }$p_T<\tfrac12$.
By \cite[Thm.~6.2]{ry90} (see also \cite[Thm.~5.15]{LP16}), for any infinite locally finite tree,
\[
p_c^{\mathrm{site}}(T)=\frac{1}{\mathrm{br}(T)},
\]
where $\mathrm{br}(T)$ denotes the branching number.
Lemma~\ref{l59}(2) implies a uniform forward expansion: if $M_n^\sigma$ is the number of
descendants of $\sigma$ at distance $n$ in the forward subtree $T_{v,\sigma}$, then
\begin{equation}\label{eq:Mn-growth}
M_n^\sigma \ \ge\ 2\Bigl(\frac{7}{3}\Bigr)^{n-1}\qquad(n\ge1),
\end{equation}
and consequently $\mathrm{br}(T)\ge \frac{7}{3}$. Hence
\[
p_T=\frac{1}{\mathrm{br}(T)}\le \frac{3}{7}<\frac12.
\]

\smallskip
Fix now $p\in(p_T,1-p_T)$ and set $q:=1-p$. Then $p>p_T$ and $q>p_T$.

\medskip
\medskip\noindent
\medskip\noindent
\medskip\noindent
\textbf{Step 2: A local separation event.}
Keep \(p\in(p_T,1-p_T)\) fixed and set \(q:=1-p\). Recall that the
vertices of \(T\) are labelled by admissible finite words \(r\) in the alphabet
\(\{0,\frac12,1\}\), as in the construction of Lemma~\ref{l59}; in particular
\(v_\varnothing=v\). For such a word \(r\), let \(T(r)\) denote the forward
subtree of \(T\) rooted at \(v_r\).

Define \(A_r\) to be the event that
\[
\eta(v_r)=\eta(v_{r0})=\eta(v_{r1})=\eta(v_{r00})=\eta(v_{r10})=0,
\]
and
\[
v_{r00}\stackrel{0}{\longleftrightarrow}\infty
\quad\text{in }T(r00),\qquad
v_{r10}\stackrel{0}{\longleftrightarrow}\infty
\quad\text{in }T(r10),
\]
and
\[
v_{r01}\stackrel{1}{\longleftrightarrow}\infty
\quad\text{in }T(r01),\qquad
v_{r11}\stackrel{1}{\longleftrightarrow}\infty
\quad\text{in }T(r11).
\]
The four forward subtrees appearing here are pairwise disjoint. Moreover, by
the recursive construction of \(T\), each of them is either isomorphic to \(T\)
or contains a child-subtree isomorphic to \(T\). Since \(p>p_T\) and
\(q>p_T\), the four survival events above have probabilities bounded below by a
positive constant depending only on \(p\). Therefore there exists \(a(p)>0\),
independent of \(r\), such that
\[
\mathbb P_p(A_r)\ge a(p).
\]

On \(A_r\), choose an infinite closed ray in \(T(r00)\) starting from
\(v_{r00}\), and an infinite closed ray in \(T(r10)\) starting from
\(v_{r10}\). Let \(\Gamma_r\) be the union of these two rays with the finite path
\[
v_{r00}-v_{r0}-v_r-v_{r1}-v_{r10}.
\]
Then \(\Gamma_r\) is a closed connected subgraph of \(G\). By the embedded-tree
construction and the local planar disjointness established in
Lemma~\ref{l59}, its embedded trace is a proper doubly-infinite simple curve.
Hence \(\mathbb R^2\setminus \Gamma_r\) has two unbounded components.

Set
\[
r^+ := r\,1\,\frac12 .
\]
By the cyclic order of the branches in the embedded-tree construction,
\(T(r^+)\) lies in the same component of
\(\mathbb R^2\setminus\Gamma_r\) as \(T(r11)\), while \(T(r01)\) lies in the
other component. Thus \(\Gamma_r\) separates \(T(r01)\) from both \(T(r^+)\)
and \(T(r11)\).

On \(A_r\), let \(C_r^{01}\) be the infinite open cluster of \(G\) containing
the infinite open path from \(v_{r01}\) inside \(T(r01)\), and let \(C_r^{11}\)
be the infinite open cluster of \(G\) containing the infinite open path from
\(v_{r11}\) inside \(T(r11)\). These two clusters are distinct, since any open
path joining them would have to cross \(\Gamma_r\), whose vertices are all
closed.

In the sequel we use only the cluster on the side opposite to the spine:
\[
C_r:=C_r^{01}.
\]
In particular,
\[
C_r\cap V(T(r^+))=\varnothing .
\]
Indeed, any open path from \(T(r01)\) to \(T(r^+)\) would have to cross
\(\Gamma_r\), which is impossible on \(A_r\) because every vertex of
\(\Gamma_r\) is closed.

\medskip\noindent
\textbf{Step 3: Borel--Cantelli and distinctness of the clusters.}
Define a sequence of admissible words by
\[
r_1=\varnothing,\qquad r_{m+1}=r_m\,1\,\frac12,\quad m\ge1.
\]
Thus \(r_{m+1}=r_m^+\). The event \(A_{r_m}\) depends only on the states of
vertices in
\[
D_m :=
\{v_{r_m},v_{r_m0},v_{r_m1}\}
\cup V(T(r_m00))\cup V(T(r_m10))
\cup V(T(r_m01))\cup V(T(r_m11)).
\]
By construction,
\[
D_m\subset V(T(r_m))\setminus V(T(r_{m+1})).
\]
Moreover, if \(n>m\), then
\[
D_n\subset V(T(r_n))\subset V(T(r_{m+1})).
\]
Hence the sets \(D_m\), \(m\ge1\), are pairwise disjoint. Therefore the events
\((A_{r_m})_{m\ge1}\) are independent. Since
\[
\mathbb P_p(A_{r_m})\ge a(p)>0
\qquad\text{for all }m,
\]
we have
\[
\sum_{m=1}^{\infty}\mathbb P_p(A_{r_m})=\infty .
\]
By the second Borel--Cantelli lemma, \(A_{r_m}\) occurs for infinitely many
\(m\), \(\mathbb P_p\)-a.s.

For every \(m\) such that \(A_{r_m}\) occurs, set
\[
C_m:=C_{r_m}^{01},
\]
that is, \(C_m\) is the infinite open cluster containing the infinite open path
from \(v_{r_m01}\) inside \(T(r_m01)\).

We claim that the clusters \(C_m\) are pairwise distinct. Suppose that \(m<n\)
and that both \(A_{r_m}\) and \(A_{r_n}\) occur. Since
\[
T(r_n)\subset T(r_{m+1})=T(r_m^+),
\]
the cluster \(C_n\) contains an infinite open path in
\[
T(r_n01)\subset T(r_n)\subset T(r_m^+).
\]
On the other hand, \(C_m\) contains an infinite open path in \(T(r_m01)\), and
\(T(r_m01)\) is separated from \(T(r_m^+)\) by the closed barrier
\(\Gamma_{r_m}\). Thus any open path in \(G\) connecting \(C_m\) to \(C_n\)
would have to cross \(\Gamma_{r_m}\), which is impossible because every vertex
of \(\Gamma_{r_m}\) is closed on \(A_{r_m}\). Hence \(C_m\neq C_n\).

Since \(A_{r_m}\) occurs for infinitely many \(m\), it follows that
\(\mathbb P_p\)-a.s. \(G\) has infinitely many infinite \(1\)-clusters.

Finally, \(q=1-p\in(p_T,1-p_T)\). Applying the same argument to the
color-switched configuration \(1-\eta\), with parameter \(q\), gives infinitely
many infinite \(0\)-clusters. Therefore, for every
\[
p\in(p_T,1-p_T),
\]
\(\mathbb P_p\)-a.s. \(G\) has infinitely many infinite \(1\)-clusters and
infinitely many infinite \(0\)-clusters.
\end{proof}

\begin{figure}
    \centering
    \includegraphics{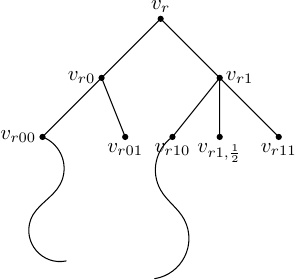}
    \caption{Infinite open clusters in the tree rooted at $v_{r01}$ is separated from the infinite open clusters in the tree rooted at $v_{r11}$ by the infinite closed cluster occupying $v_r$, $v_{r0},v_{r1}$, $v_{r00},v_{r10}$}.
    \label{fig:te}
\end{figure}

\begin{remark}If we replace the assumption `the minimal vertex degree is at least 7" in Lemmas \ref{l59} and \ref{l510} by the minimal vertex degree is at least 5; and the minimal face degree is at least 4'', the same conclusion can be proved similarly. \end{remark}

\section{Characterization of Critical Percolation Probability}\label{sect:ccp}

In a powerful refinement of a classical argument of Hammersley \cite{jmh57a},  
Duminil-Copin and Tassion \cite{DCT15} showed, for transitive $G$,
that the critical value $p_c(G)$ may be characterized in terms of the 
mean number of points on the surface of a box that are connected to its root.
This work is extended in this section to general locally finite graphs without the transitive or quasi-transitive assumptions. The proof utilizes the technique of differential inequalities (\cite{AB87}). The techniques developed in this section can also be applied to \cite{perc24} to establish a vertex-cut characterization of $p_c^{site}$ due to Kahn (\cite{JK03}), and to disprove an edge-cut characterization of $p_c^{site}$ proposed by Lyons and Peres (\cite{LP16}). Both vertex-cut and edge-cut characterizations for $p_c^{bond}$ and for $p_c^{site}$ on bounded-degree graphs were previously proved in \cite{pt23}. The main advantage of the techniques presented here is that they remove the need for the bounded-degree assumption.

Let $G=(V,E)$ be a graph. For each $p\in (0,1)$, let $\mathbb{P}_p$ be the probability measure of the i.i.d.~Bernoulli($p$) site percolation on $G$.
For each $S\subset V$, let $S^{\circ}$ consist of all the interior vertices of $S$, i.e., vertices all of whose neighbors are in $S$ as well.
For each  $S\subseteq V$, $v\in S$, define
\begin{align*}
\varphi_p^{v}(S):=\begin{cases}\sum_{y\in S:[\partial_V y]\cap S^c\neq\emptyset}\mathbb{P}_p(v\xleftrightarrow{S^{\circ}} \partial_V y)&\mathrm{if}\ v\in S^{\circ}\\
1&\mathrm{if}\ v\in S\setminus S^{\circ}
\end{cases}
\end{align*}
where 
\begin{itemize}
\item $v\xleftrightarrow{S^{\circ}} x$ is the event that the vertex $v$ is joined to the vertex $x$ by an open path visiting only interior vertices in $S$;
\item let $A\subseteq V$; $v\xleftrightarrow{S^{\circ}} A$ if and only if there exists $x\in A$ such that $v\xleftrightarrow{S^{\circ}} x$;
\item $\partial_V y$ consists of all the vertices adjacent to $y$.
\end{itemize}

The following technical lemmas (Lemmas \ref{l71} and \ref{l73}) were proved in \cite{perc24},
based on an adaptation of arguments in \cite{DCT15}.

\begin{lemma}\label{l71}Let $G=(V,E)$ be an infinite, connected, locally finite graph. The critical site percolation probability on $G$ is given by
\begin{align*}
    \tilde{p}_c=\sup\{p\geq 0:\exists \epsilon_0>0, \mathrm{s.t.}\forall v\in V, \exists S_v\subseteq V\ \mathrm{satisfying}\ |S_v|<\infty\ \mathrm{and}\ v\in S_v^{\circ}, \varphi_p^{v}(S_v)\leq 1-\epsilon_0\}
\end{align*}
Moreover,
\begin{enumerate}
    \item If $p>\tilde{p}_c$, a.s.~there exists an infinite 1-cluster; moreover, for any $\epsilon>0$ there exists a vertex $w$,  such that 
    \begin{align}
    \forall S_w\subseteq V \ \text{finite with } w\in S_w^{\circ},\quad 
\varphi_q^{w}(S_w)> 1-\epsilon_1 \qquad \forall q\ge p_1.
\label{wc1};\ \forall q\geq p_1
    \end{align}
    where $p_1,\epsilon_1$ are such that 
    \begin{align}
    p_1\in (\tilde{p}_c,p);\ \epsilon_1\in(0,\epsilon);\ \left(\frac{1-p}{1-p_1}\right)^{1-\epsilon_1}<\left(\frac{1-p}{1-\tilde{p}_c}\right)^{1-\epsilon}.\label{pepe}
    \end{align}
    Any vertex $w$ satisfying (\ref{wc1}) also satisfies
    \begin{align}
    \mathbb{P}_p(w\leftrightarrow \infty)\geq 1-\left(\frac{1-p}{1-\tilde{p}_c}\right)^{1-\epsilon}\label{lbc}
    \end{align}
    \item If $p<\tilde{p}_c$, then for any vertex $v\in V$
    \begin{align}
        \mathbb{P}_p(v\leftrightarrow \infty)=0.\label{lbc2}
    \end{align}
\end{enumerate}
In particular, (1) and (2) implies that $p_c^{site}(G)=\tilde{p}_c$
\end{lemma}

\begin{lemma}\label{l73}Let $G=(V,E)$ be an infinite, connected, locally finite graph. Let $p>0$, $u\in S\subset A$ and $B\cap S=\emptyset$. Then
\begin{itemize}
\item If $u\in S^{\circ}$
\begin{align*}
    \mathbb{P}_p(u\xleftrightarrow{A} B)\leq \sum_{y\in S:\partial_V y\cap S^c\neq \emptyset}
    \mathbb{P}_p(u\xleftrightarrow{S^{\circ}}\partial_V y )\mathbb{P}_p(y\xleftrightarrow{A} B).
\end{align*}
\item If $u\in S\setminus S^{\circ}$,
\begin{align*}
    \mathbb{P}_p(u\xleftrightarrow{A} B)\leq \sum_{y\in S:\partial_V y\cap S^c\neq \emptyset}
    \mathbf{1}_{y=u}\mathbb{P}_p(y\xleftrightarrow{A} B).
\end{align*}
\end{itemize}
\end{lemma}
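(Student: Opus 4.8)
The plan is to split on whether $u$ is interior to $S$. If $u\in S\setminus S^\circ$, then $u$ has a neighbour outside $S$, so $u$ is itself among the vertices indexed in the right-hand sum; the indicator $\mathbf{1}_{y=u}$ annihilates all other terms, and the asserted inequality reduces to the trivial identity $\mathbb{P}_p(u\xleftrightarrow{A}B)=\mathbb{P}_p(u\xleftrightarrow{A}B)$. So from now on assume $u\in S^\circ$, and argue in the spirit of the proof of Lemma~\ref{l72}, now exploring the open cluster of $u$ inside $S^\circ$.

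Put $\mathcal C:=\{x\in V:\ u\xleftrightarrow{S^\circ}x\}$; then $\mathcal C\subseteq S^\circ\subseteq S$, and on $\{u\xleftrightarrow{A}B\}$ we have $u$ open and $u\notin B$ (since $B\cap S=\emptyset$), so $u\in\mathcal C$. Condition on $\{\mathcal C=C\}$ for a fixed value $C\ni u$ and work on $\{u\xleftrightarrow{A}B,\ \mathcal C=C\}$. Take an open path $\gamma=(u=z_0,z_1,\dots,z_k)$ with $z_k\in B$ and $z_i\in A$ for all $i$; since $z_k\notin S\supseteq C$, the path must leave $C$. Let $j$ be the \emph{largest} index with $z_j\in C$ and set $y:=z_{j+1}$. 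Since $z_j\in C\subseteq S^\circ$, all its neighbours lie in $S$, so $y\in S$; and $y\notin C$ by maximality of $j$. If we had $y\in S^\circ$, then $y$ would be open, interior to $S$, and adjacent to $z_j\in C$, hence would lie in $\mathcal C=C$, a contradiction; so $y\in S\setminus S^\circ$, i.e.\ $\partial_V y\cap S^c\neq\emptyset$, and $z_j\in\partial_V y\cap C$ shows $u\xleftrightarrow{S^\circ}\partial_V y$. Finally, writing $D:=C\cup\{w\in S^\circ\setminus C:\ w\sim C\}$ (the explored cluster together with its closed inner boundary), the same reasoning applied to each $z_i$ with $i\ge j+1$ — which is open, not in $C$, and would be absorbed into $\mathcal C$ were it an interior vertex adjacent to $C$ — shows that the tail $(z_{j+1},\dots,z_k)$ is an open path from $y$ to $B$ lying entirely in $A\setminus D$. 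Hence on $\{u\xleftrightarrow{A}B,\ \mathcal C=C\}$ there is some $y\in S\setminus S^\circ$ with $\partial_V y\cap C\neq\emptyset$ and $y\xleftrightarrow{A\setminus D}B$.

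The decisive point is that $\{\mathcal C=C\}$ is measurable with respect to the states of the vertices of $D$ alone (those in $C$ open, those in $D\setminus C$ closed), while $\{y\xleftrightarrow{A\setminus D}B\}$ is measurable with respect to the states of $V\setminus D$; these two events are therefore independent. A union bound over the candidate vertices $y$, followed by decomposition over the value $C$ taken by $\mathcal C$, then gives
\begin{align*}
\mathbb{P}_p\bigl(u\xleftrightarrow{A}B\bigr)
&=\sum_{C}\mathbb{P}_p\bigl(u\xleftrightarrow{A}B,\ \mathcal C=C\bigr)
\le\sum_{C}\ \sum_{\substack{y\in S\setminus S^\circ\\ \partial_V y\cap C\neq\emptyset}}\mathbb{P}_p\bigl(y\xleftrightarrow{A\setminus D}B\bigr)\,\mathbb{P}_p\bigl(\mathcal C=C\bigr)\\
&\le\sum_{y\in S\setminus S^\circ}\mathbb{P}_p\bigl(y\xleftrightarrow{A}B\bigr)\;\mathbb{P}_p\bigl(\partial_V y\cap\mathcal C\neq\emptyset\bigr)
=\sum_{y\in S:\,\partial_V y\cap S^c\neq\emptyset}\mathbb{P}_p\bigl(u\xleftrightarrow{S^\circ}\partial_V y\bigr)\,\mathbb{P}_p\bigl(y\xleftrightarrow{A}B\bigr),
\end{align*}
using $\{y\in S\setminus S^\circ\}=\{y\in S:\partial_V y\cap S^c\neq\emptyset\}$, the monotonicity $\{y\xleftrightarrow{A\setminus D}B\}\subseteq\{y\xleftrightarrow{A}B\}$, and $\mathbb{P}_p(\partial_V y\cap\mathcal C\neq\emptyset)=\mathbb{P}_p(u\xleftrightarrow{S^\circ}\partial_V y)$. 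This is exactly the first asserted inequality.

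I expect the only delicate step to be the combinatorial extraction in the second paragraph: one must choose $j$ as the \emph{last} visit of $\gamma$ to $\mathcal C$ (not the first exit) so that the whole tail of $\gamma$ avoids not only $\mathcal C$ but also the part of $S^\circ$ adjacent to it — precisely the set $D$ whose states are revealed by conditioning on $\mathcal C$ — and one must verify that $y=z_{j+1}$ is forced into $S\setminus S^\circ$, not merely out of $\mathcal C$, so that $y$ genuinely indexes a term of the target sum. Once these points are settled, the remainder is the routine independence-and-union-bound computation displayed above.
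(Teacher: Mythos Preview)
Your proof is correct but follows a different route from the paper's. The paper argues via the van den Berg--Kesten inequality: along an open path from $u$ to $B$, it takes the \emph{first} index $k$ with $u_{k+1}\notin S^\circ$, observes that $u_{k+1}\in S\setminus S^\circ$, and notes that the events $\{u\xleftrightarrow{S^\circ}\partial_V u_{k+1}\}$ and $\{u_{k+1}\xleftrightarrow{A}B\}$ occur disjointly (witnessed by the initial and terminal segments of the path). A union bound over the value of $u_{k+1}$ together with BK then gives the product form directly.

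You instead condition on the open cluster $\mathcal C$ of $u$ inside $S^\circ$, take the \emph{last} visit of the path to $\mathcal C$, and use independence of the tail from the explored region $D$. The two stopping rules are tailored to their respective methods: the first exit from $S^\circ$ gives vertex-disjoint witnesses suitable for BK, while the last visit to $\mathcal C$ guarantees the tail avoids not only $\mathcal C$ but also its closed inner boundary in $S^\circ$, which is exactly what independence after conditioning requires. Your approach is more elementary in that it avoids invoking BK and makes the measurability structure explicit; the paper's is terser but leans on a correlation inequality. Either way the content is the same, and your careful verification that $y=z_{j+1}$ lands in $S\setminus S^\circ$ (rather than merely outside $\mathcal C$) is precisely the point the paper leaves implicit.
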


\begin{lemma}\label{l74}Let $G=(V,E)$ be an infinite, connected, locally finite graph. For each $p>p_c^{site}(G)$ and $\epsilon>0$, there exist infinitely many vertices in $V$ satisfying (\ref{lbc}).
\end{lemma}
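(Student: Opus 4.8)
The plan is to upgrade the single-vertex conclusion of Lemma \ref{l71}(1) to an infinite set of vertices, by exploiting the freedom in the choice of the starting point $p_1$ in the differential-inequality argument, together with a ``finite support'' observation for the sets $S_v$. Recall from Lemma \ref{l71} that for $p > p_c^{site}(G) = \tilde p_c$ and any $\epsilon > 0$ there is \emph{at least one} vertex $w$ with $\mathbb{P}_p(w \leftrightarrow \infty) \geq 1 - \left(\frac{1-p}{1-\tilde p_c}\right)^{1-\epsilon}$; the vertex $w$ there arose because, by Lemma \ref{l72}, at least one vertex has $\inf_{S} \varphi_p^v(S)$ bounded below uniformly for $p$ in a neighbourhood of $\tilde p_c$ from above. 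I would first observe that the set $W$ of vertices satisfying (\ref{lbc}) cannot be finite: if $W$ were finite, remove from $G$ a large finite ball containing $W$, and apply Lemma \ref{l71}(1) to each infinite component $G'$ of the remainder. Here one needs that $p_c^{site}(G') \le \tilde p_c$ for every such component, which follows since an infinite $1$-cluster in $G'$ is an infinite $1$-cluster in $G$, hence $p_c^{site}(G') \geq p_c^{site}(G) = \tilde p_c$; and conversely for $p > \tilde p_c$ the set $S_v$ witnessing $\varphi_p^v(S_v) \le 1-\epsilon_0$, being finite, eventually lies inside $G'$ for all $v \in G'$ far from the boundary, so the characterization of $\tilde p_c$ in Lemma \ref{l71} applied to $G'$ gives $p_c^{site}(G') \le \tilde p_c$, i.e. $p_c^{site}(G') = \tilde p_c$.

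Next, applying Lemma \ref{l71}(1) to each infinite component $G'$ of $G$ minus the large ball would produce, for every $p > \tilde p_c$ and every $\epsilon>0$, a vertex $w' \in G'$ with $\mathbb{P}^{G'}_p(w' \leftrightarrow \infty) \geq 1 - \left(\frac{1-p}{1-\tilde p_c}\right)^{1-\epsilon}$, and since connectivity within $G'$ implies connectivity within $G$, the same bound holds for $\mathbb{P}_p$ on $G$; thus $w'$ satisfies (\ref{lbc}) and lies outside the ball, hence outside $W$. Since $G$ is infinite, connected and locally finite, it has at least one end, so at least one such infinite component $G'$ exists for every radius, and this contradicts the finiteness of $W$. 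Iterating (each time excising a ball large enough to contain the finitely many vertices found so far) produces infinitely many vertices satisfying (\ref{lbc}).

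The main obstacle I anticipate is the bookkeeping needed to transfer the characterization of $\tilde p_c$ from $G$ to a subgraph $G'$ obtained by deleting a finite set: one must check both inequalities $p_c^{site}(G') \le \tilde p_c$ and $p_c^{site}(G') \ge \tilde p_c$ so that Lemma \ref{l71}(1) can legitimately be invoked on $G'$ with the \emph{same} threshold value. The $\ge$ direction is immediate by monotonicity of the clusters under adding vertices; the $\le$ direction requires noting that for each $p \in (\tilde p_c, 1)$ the uniform witness sets $\{S_v\}$ from the definition of $\tilde p_c$ on $G$ are finite, hence for $v$ at distance greater than $\max_v \operatorname{diam}(S_v)$ from the deleted ball the set $S_v$ is unchanged in $G'$ and still satisfies $\varphi_p^{v}(S_v) \le 1 - \epsilon_0$ computed in $G'$ (the relevant connectivity events $v \xleftrightarrow{S_v^{\circ}} \partial_V y$ only involve vertices of $S_v$). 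A cleaner alternative, avoiding any discussion of subgraphs, is to run the Duminil--Copin--Tassion differential inequality directly: fix $p > \tilde p_c$ and $\epsilon > 0$, and note that for \emph{every} finite set $F \subset V$ there must be a vertex $v \notin F$ with $\inf_{S} \varphi_q^v(S) \ge 1-\epsilon$ for all $q$ in some right-neighbourhood of $\tilde p_c$ — otherwise every vertex outside $F$ would, for some $q$ arbitrarily close to $\tilde p_c$, admit a finite set $S_v$ with $\varphi_q^v(S_v) < 1-\epsilon$, and combining these with the finitely many vertices in $F$ would (by a compactness/diagonal argument over a sequence $q_n \downarrow \tilde p_c$) contradict the definition of $\tilde p_c$ as the supremum. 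For such a $v$ the integration in the proof of Lemma \ref{l71}(1) yields (\ref{lbc}) with that $v$, and since $F$ was arbitrary we get infinitely many such vertices. I would present this second route as the main argument, as it keeps everything inside $G$.
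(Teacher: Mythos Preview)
Your second (preferred) route has a genuine gap. You assert that if no $v\notin F$ satisfies $\inf_S\varphi_q^v(S)\ge 1-\epsilon$ on a right-neighbourhood of $\tilde p_c$, then a ``compactness/diagonal argument over a sequence $q_n\downarrow\tilde p_c$'' produces a single $q>\tilde p_c$ at which the defining condition of $\tilde p_c$ holds, contradicting the supremum. But the negation only gives, for each $v\notin F$, a threshold $q_v>\tilde p_c$ with $\inf_S\varphi_q^v(S)<1-\epsilon$ for all $q\in(\tilde p_c,q_v]$ (using monotonicity of $\varphi$ in $q$). Nothing forces $\inf_{v\notin F}q_v>\tilde p_c$; the thresholds may accumulate at $\tilde p_c$, and no diagonal extraction yields a uniform $q$. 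Equivalently, the sets $V_{q,\epsilon}=\{v:\inf_S\varphi_q^v(S)\ge1-\epsilon\}$ are nested decreasing as $q\downarrow\tilde p_c$ and each is nonempty, but nothing in the definition of $\tilde p_c$ prevents $\bigcap_{q>\tilde p_c}V_{q,\epsilon}$ from being finite or empty.

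Your first route is much closer in spirit to the paper, but the step showing $p_c^{site}(G')\le\tilde p_c$ is confused: you invoke ``the set $S_v$ witnessing $\varphi_p^v(S_v)\le 1-\epsilon_0$'' for $p>\tilde p_c$, yet for $p>\tilde p_c$ the defining condition \emph{fails}, so such witnesses need not exist. The paper avoids computing $p_c^{site}$ of any subgraph. It fixes $\tilde p=\tfrac{p+p_c}{2}$, assumes $V_{\tilde p,\epsilon}$ lies inside a ball $B(v_0,N)$, sets $W=V\setminus B(v_0,N)$, and then iterates Lemma~\ref{l73} \emph{directly} inside $W$: every $x\in W$ lies outside $V_{\tilde p,\epsilon}$, hence admits a finite $S_x\ni x$ with $\varphi_{\tilde p}^x(S_x)<1-\epsilon$, and chaining these via Lemma~\ref{l73} gives $\mathbb P_{\tilde p}(x\xleftrightarrow{W}\infty)\le(1-\epsilon)^n\to0$. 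Thus $G\setminus B(v_0,N)$ has no infinite open cluster at $\tilde p$, contradicting $\tilde p>p_c$ since removing a finite set cannot destroy percolation. This shows $|V_{\tilde p,\epsilon}|=\infty$ at the \emph{fixed} intermediate level $\tilde p$, and then monotonicity of $\varphi$ in $p$ together with the integration from Lemma~\ref{l72} on $[\tilde p,p]$ gives a uniform positive lower bound on $\mathbb P_p(v\leftrightarrow\infty)$ for all $v\in V_{\tilde p,\epsilon}$ --- which is exactly what the downstream application in Lemma~\ref{l83} requires.
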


\begin{proof}
Fix $p>p_c^{site}(G)$ and $\epsilon>0$.
Choose $p_1\in(p_c^{site}(G),p)$ and $\epsilon_1\in(0,\epsilon)$ as in Lemma~\ref{l71}(1),
so that \eqref{pepe} holds. Set $\delta:=\epsilon_1/2$. Define
\begin{align}
V_{p_1,\delta}:=\Bigl\{v\in V:\ \forall\, S\subset V\ \text{finite with }v\in S^{\circ},\ 
\varphi_{p_1}^{v}(S)\ge 1-\delta\Bigr\}.\label{dve}
\end{align}

\smallskip
\noindent\textbf{Step 1: $|V_{p_1,\delta}|=\infty$.}
Recall $V_{p_1,\delta}$ defined in \eqref{dve}.
Assume for contradiction that $|V_{p_1,\delta}|<\infty$.
Then there exist $v_0\in V$ and $N\in\mathbb{N}$ such that
$V_{p_1,\delta}\subseteq B(v_0,N)$.  Let
\[
W:=V\setminus B(v_0,N),
\qquad
M:=\sup_{x\in W}\mathbb{P}_{p_1}\bigl(x\xleftrightarrow{W}\infty\bigr).
\]

For each $x\in W$, since $x\notin V_{p_1,\delta}$, there exists a finite set $S_x\subseteq W$
with $x\in S_x^{\circ}$ and $\varphi_{p_1}^{x}(S_x)\le 1-\delta$.
Applying Lemma~\ref{l73} with $A=W$, $B=\infty$ and $S=S_x$, we obtain
\begin{align*}
\mathbb{P}_{p_1}(x\xleftrightarrow{W}\infty)
&\le \sum_{y\in S_x:\,\partial_V y\cap S_x^{c}\neq\emptyset}
\mathbb{P}_{p_1}\bigl(x\xleftrightarrow{S_x^{\circ}}\partial_V y\bigr)\,
\mathbb{P}_{p_1}\bigl(y\xleftrightarrow{W}\infty\bigr)\\
&\le \varphi_{p_1}^{x}(S_x)\,M
\le (1-\delta)M.
\end{align*}
Taking the supremum over $x\in W$ yields $M\le (1-\delta)M$, hence $M=0$.
Therefore $\mathbb{P}_{p_1}(x\xleftrightarrow{W}\infty)=0$ for all $x\in W$.

On the other hand, since $p_1>p_c^{site}(G)$, Lemma~\ref{l71}(1) implies that
$\mathbb{P}_{p_1}$-a.s.\ there exists an infinite $1$-cluster in $G$.
As $B(v_0,N)$ is finite and $G$ is locally finite, $G\setminus B(v_0,N)$ has only finitely
many components, hence the intersection of that infinite open cluster with $W$ must contain
an infinite component. In particular, with positive probability there exists $x\in W$ such that
$x\xleftrightarrow{W}\infty$, contradicting $M=0$.
This proves that $|V_{p_1,\delta}|=\infty$.

\smallskip
\noindent\textbf{Step 2: infinitely many vertices satisfy \eqref{lbc}.}
Pick infinitely many vertices $w\in V_{p_1,\delta}$.
Then for every such $w$, every finite $S_w\subset V$ with $w\in S_w^{\circ}$, and every $q\ge p_1$,
monotonicity in $p$ gives
\[
\varphi_q^{w}(S_w)\ \ge\ \varphi_{p_1}^{w}(S_w)\ \ge\ 1-\delta\ >\ 1-\epsilon_1,
\]
so $w$ satisfies \eqref{wc1}.  By Lemma~\ref{l71}(1), any vertex satisfying \eqref{wc1}
also satisfies \eqref{lbc} at parameter $p$.  Since there are infinitely many such $w$,
the lemma follows.
\end{proof}

\medskip

\noindent\textbf{Proof of Proposition \ref{la68}.}
For each $v\in V$, let $C(v)$ be the 1-cluster including $v$. If $v$ is closed, then $C(v)=\emptyset$. We have
\begin{align*}
\mathbb{P}_p(u\leftrightarrow v)
\geq \mathbb{P}_p(u\leftrightarrow \infty,v\leftrightarrow\infty,\mathcal{A}_1)
=\mathbb{P}_p(v\leftrightarrow\infty,\mathcal{A}_1)-\mathbb{P}_p(v\leftrightarrow\infty,\mathcal{A}_1,u\nleftrightarrow\infty)
\end{align*}
Note that
\begin{align*}
\mathbb{P}_p(v\leftrightarrow\infty,\mathcal{A}_1,u\nleftrightarrow\infty)
&=\sum_{S:[u\in S,|S|<\infty]\ \mathrm{or}\ S=\emptyset}\mathbb{P}_p(v\leftrightarrow\infty,\mathcal{A}_1|C(u)=S)\mathbb{P}_p(C(u)=S)\\
&\leq \mathbb{P}_p(v\leftrightarrow\infty,\mathcal{A}_1)\sum_{S:[u\in S,|S|<\infty]\ \mathrm{or}\ S=\emptyset}\mathbb{P}_p(C(u)=S)\\
&=\mathbb{P}_p(v\leftrightarrow\infty,\mathcal{A}_1)\mathbb{P}_p(u\nleftrightarrow\infty)
\end{align*}
Hence we have
\begin{align*}
\mathbb{P}_p(u\leftrightarrow \infty,v\leftrightarrow\infty,\mathcal{A}_1)\geq \mathbb{P}_p(v\leftrightarrow\infty,\mathcal{A}_1)\mathbb{P}_p(u\leftrightarrow\infty)
\end{align*}
Similarly we have 
\begin{align*}
\mathbb{P}_p(v\leftrightarrow\infty,\mathcal{A}_1)\geq 
\mathbb{P}_p(v\leftrightarrow\infty)\mathbb{P}_p(\mathcal{A}_1)
\end{align*}
Then the lemma follows.
$\hfill\Box$

Proposition \ref{la68} may also be proved using the van den Berg–Kesten–Reiner (BKR) inequality (see \cite{BK85,RD00}; see also \cite{BCR98,RP00}).

\bigskip
\noindent\textbf{Proof of Corollary \ref{l83}.} Let $\mathcal{A}_f$ be the event that the number of infinite 1-clusters is finite and nonzero.  Let $p>p_c^{site}(G)$. It suffices to show that if $\mathbb{P}_p(\mathcal{A}_f)>0$, then $p\geq p_{conn}$.

 If $\mathbb{P}_p(\mathcal{A}_f)>0$, then $\mathbb{P}_p(\mathcal{A}_1)>0$. Let $u,v\in V_{p,\epsilon}$ for some $\epsilon>0$. By Lemma \ref{la68} we obtain
\begin{align*}
\mathbb{P}_p(u\leftrightarrow v)\geq \left[1-\left(\frac{1-p}{1-p_c}\right)^{1-\epsilon}\right]^{2}\mathbb{P}_p(\mathcal{A}_1)>0.
\end{align*}
By Lemma $\ref{l74}$, there are infinitely many vertices in $V_{p,\epsilon}$, hence we can make $d_{G}(u,v)\rightarrow\infty$ given that the graph $G$ is locally finite.
Then $p\geq p_{conn}$, and the lemma follows.
$\hfill\Box$

\section{Embedded Forests}\label{sect:planarg}

In this section we prove exponential decay of point-to-point connection probabilities in the matching graph $G_*$ (and hence also in $G$) whenever $p<1-p_c^{\mathrm{site}}(T)$. 
The key is to construct, along a fixed geodesic $l_{uw}$ between two vertices $u$ and $w$, an \emph{embedded forest} whose components separate $u$ from $w$ more and more often as $d(u,w)\to\infty$. 
This construction uses the planarity of $G$ and the assumption that the minimum \emph{vertex} degree is at least $7$.

\begin{definition}[Chandeliers and anti-chandeliers]\label{df41}
Let $G=(V,E)$ be an infinite, connected graph properly embedded in $\mathbb{R}^2$, with minimum vertex degree at least $7$.
Fix $v\in V$. A \emph{chandelier} $R_v$ rooted at $v$ is a tree embedded in $G$ together with distinguished vertices $v_1,v_2\in V(R_v)$ such that:
\begin{enumerate}[label=(C\arabic*)]
\item $v_1$ is the unique neighbor of $v$ in $R_v$, and $v_2$ is the unique neighbor of $v_1$ in $R_v$ distinct from $v$ (so the path from $v$ to $v_2$ in $R_v$ is $v,v_1,v_2$).
\item Traversing the path $v,v_1,v_2$ in this order, the vertex $v_1$ has \emph{exactly three} incident faces on the left.
\item The subtree of $R_v$ rooted at $v_2$, denoted by $T_{v_2}$, is isomorphic to the tree constructed in Lemma~\ref{l59}(1).
\item Let $L_1$ and $L_2$ be the two boundary rays of $T_{v_2}$ with $L_1\cap L_2=\{v_2\}$, and let $Q\subset\mathbb{R}^2$ be the (closed) region bounded by $L_1\cup L_2$ that contains the embedding of $T_{v_2}$. Then:
\begin{itemize}
\item traversing $v_1,v_2,L_1$ in this order, at $v_2$ there are at least $3$ incident faces on the left \emph{outside} $Q$, and exactly $1$ incident face on the right \emph{inside} $Q$;
\item traversing $v_1,v_2,L_2$ in this order, at $v_2$ there are at least $3$ incident faces on the right \emph{outside} $Q$, and exactly $1$ incident face on the left \emph{inside} $Q$.
\end{itemize}
\end{enumerate}

An \emph{anti-chandelier} is defined in the same way, except that in (C2) and (C4) we interchange “left” and “right”.

If $R_v$ is a chandelier, we call
\[
\langle v,v_1\rangle\cup\langle v_1,v_2\rangle\cup L_1
\quad\text{(resp.\ }\langle v,v_1\rangle\cup\langle v_1,v_2\rangle\cup L_2\text{)}
\]
the \emph{left boundary} (resp.\ \emph{right boundary}) of $R_v$.
If $R_v$ is an anti-chandelier, the roles of $L_1$ and $L_2$ are swapped, so the same sets are called the \emph{right} and \emph{left} boundaries, respectively.
\end{definition}

See Figure \ref{fig:acd} for chandeliers and anti-chandeliers.

\begin{figure}
    \centering
    \includegraphics{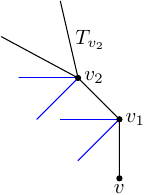}\qquad\qquad
    \includegraphics{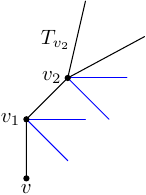}
    \caption{In the left graph, the black lines represent a chandelier, the blue lines represent incident edges on the left of a chandelier. In the right graph, the black lines represent an anti-chandelier, and blue lines represent incident edges on the right of an anti-chandelier.}
   \label{fig:acd}
\end{figure}

For each $v\in V$, define
\[
DF_v:=\max\bigl\{|f|:\ f \text{ is a finite face of } G \text{ incident to } v\bigr\}.
\]
Here $|f|$ denotes the degree of the face $f$, i.e., the number of edges along its boundary.
If all faces incident to $v$ are infinite, we set $DF_v=-\infty$.

Let $u,w\in V$ be distinct vertices, and fix a geodesic (shortest path) in $G$ from $u$ to $w$,
\[
l_{uw}=(z_0,z_1,\ldots,z_n),
\qquad z_0=u,\ z_n=w,
\]
where $z_{i-1}$ and $z_i$ are adjacent for all $1\le i\le n$.
For $x=z_j\in l_{uw}$ we call $j$ the \emph{$z$-index} of $x$.
We orient $l_{uw}$ from $u$ to $w$ (increasing $z$-index). 
All references to “left” and “right” are with respect to this orientation.

We now construct, along $l_{uw}$, a sequence of chandeliers (or empty sets)
\[
R_1,R_2,\ldots
\]
embedded to the left of $l_{uw}$, and a sequence of anti-chandeliers (or empty sets)
\[
U_1,U_2,\ldots
\]
embedded to the right of $l_{uw}$.

\medskip
\noindent\textbf{Left side.}
Let $\zeta_1$ be the first vertex on $l_{uw}$ (excluding $z_0$ and the last three vertices $z_{n-2},z_{n-1},z_n$) such that either
\begin{enumerate}[label=(L\arabic*)]
\item \label{L1fin}
$\zeta$ is incident to at least two faces on the left of $l_{uw}$
(with respect to the orientation $u\to w$), or
\item \label{L2inf}
$\zeta$ is incident to exactly one infinite face on the left of $l_{uw}$
(with respect to the orientation $u\to w$).
\end{enumerate}

If (L2) holds, we set $R_1=\emptyset$.
If (L1) holds, let $b_1$ be the predecessor of $\zeta_1$ on $l_{uw}$ and choose the incident edge $e_1=\langle \zeta_1,a_1\rangle$ on the left of $l_{uw}$ with $a_1\notin l_{uw}$ such that there are no other edges of $G$ in the angle (on the left side) between $e_1$ and $\langle b_1,\zeta_1\rangle$.
Define $R_1$ to be the chandelier at $\zeta_1$ starting from $e_1$.

\medskip
Inductively, suppose $\zeta_s$ and $R_s$ have been defined. Let $\zeta_{s+1}$ be the next vertex after $\zeta_s$ along $l_{uw}$ satisfying (L1) or (L2) with minimal $z$-index.
If $\zeta_{s+1}$ satisfies (L2), set $R_{s+1}=\emptyset$.
If $\zeta_{s+1}$ satisfies (L1), define $b_{s+1}$ and pick $e_{s+1}=\langle \zeta_{s+1},a_{s+1}\rangle$ analogously.
If $R_s\neq\emptyset$ and $a_{s+1}=a_s$, set $R_{s+1}=R_s$; otherwise let $R_{s+1}$ be the chandelier at $\zeta_{s+1}$ starting from $e_{s+1}$.

We stop once we have passed $z_{n-1}$.
Since $l_{uw}$ is finite, the construction terminates after finitely many steps.

\begin{lemma}\label{l42}
If $R_j\neq\emptyset$ and $R_{j+3}\neq\emptyset$, then $a_j\neq a_{j+3}$.
\end{lemma}

\begin{proof}From the construction of $l_{uw}$ we see that $l_{\zeta_j\zeta_{j+3}}$ is the shortest path in $G$ joining $\zeta_j$ and $\zeta_{j+3}$ which has length at least 3. If $a_j=a_{j+3}$, then $\zeta_j,a_j(=a_{j+3}),\zeta_{j+3}$ form a path of length 2 joining $\zeta_j$ and $\zeta_{j+3}$. The contradiction implies the lemma.
\end{proof}

\medskip
\noindent\textbf{Right side.}
Define the sequence $(\xi_i,U_i)_{i\ge1}$ on the right of $l_{uw}$ \emph{mutatis mutandis}
from the above left-side construction, with the following replacements:
\[
\text{left}\leftrightarrow \text{right},\qquad
(\zeta_i,b_i,e_i,a_i,R_i)\leftrightarrow(\xi_i,c_i,f_i,d_i,U_i),\qquad
\text{chandelier}\leftrightarrow \text{anti-chandelier}.
\]
In particular, in the inductive step the identification case is interpreted as
$U_{s+1}=U_s$ \emph{only if} $U_s\neq\emptyset$ and $d_{s+1}=d_s$; otherwise $U_{s+1}$
is defined as a new anti-chandelier (or $\emptyset$ if the infinite-face case occurs).

\begin{lemma}\label{ll42}
If $U_j\neq\emptyset$ and $U_{j+3}\neq\emptyset$, then $d_j\neq d_{j+3}$.
\end{lemma}

 Let
 \begin{align*}
 \mathcal{R}_{uw}&=\{R_1,R_2,\ldots\}\\
\mathcal{U}_{uw}&=\{U_1,U_2,\ldots\}
 \end{align*}
 More precisely, $\mathcal{R}_{uw}$ is the set of all the chandeliers and emptysets constructed on the left of $l_{uw}$ as above; and $\mathcal{U}_{uw}$ is the set of all the anti-chandeliers and emptysets constructed on the right of $l_{uw}$ as above.

\begin{figure}
    \centering
    \includegraphics{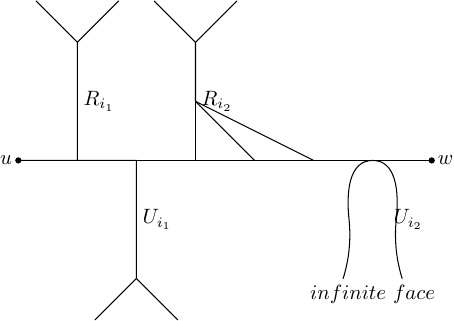}
    \caption{Embedded forest}.\label{fig:ef}
\end{figure}

Fix a shortest path $l_{uw}= (z_0=u,z_1,\dots,z_n=w)$, oriented from $u$ to $w$.
For $x,y\in l_{uw}$, write $l_{uw}[x,y]$ for the (unique) subpath of $l_{uw}$ joining $x$ and $y$
(including endpoints).  For a rooted chandelier/anti-chandelier $H$, write $\mathrm{rt}(H)$ for its root.

\begin{setup}\label{ap44}
Let $G,u,w,l_{uw},\mathcal R_{uw},\mathcal U_{uw}$ be as above, and let
$\{R_m\}_{m\ge1}$ (resp.\ $\{U_m\}_{m\ge1}$) be the left (resp.\ right) sequence
constructed along $l_{uw}$.

\smallskip
\noindent\textbf{(Indexing of distinct nonempty objects).}
Let $i_1<i_2<\cdots<i_k$ be the indices of the \emph{first occurrences} of the distinct
nonempty chandeliers in $\{R_m\}$, i.e.
\begin{enumerate}[label=(\roman*)]
\item $R_{i_s}\neq\emptyset$ for all $s$ and $R_{i_s}\neq R_{i_{s'}}$ for $s\neq s'$;
\item for every $m$ with $R_m\neq\emptyset$, there exists a unique $s$ such that
$R_m=R_{i_s}$, and $i_s=\min\{m':\,R_{m'}=R_{i_s}\}$.
\end{enumerate}
Define $j_1<j_2<\cdots<j_r$ analogously for the distinct nonempty \emph{anti-chandeliers}
in $\{U_m\}$.

\smallskip
\noindent\textbf{(Boundary rays and first intersection).}
For each $s$ (resp.\ $t$), let $\partial_L R_{i_s},\partial_R R_{i_s}$
(resp.\ $\partial_L U_{j_t},\partial_R U_{j_t}$) be the left/right boundary ray,
oriented away from its root $\zeta_{i_s}$ (resp.\ $\xi_{j_t}$).
For an oriented ray $\gamma$ and a set $A$ with $\gamma\cap A\neq\emptyset$, let
$\mathrm{first}(\gamma,A)$ denote the first intersection vertex of $\gamma$ with $A$
encountered when traversing $\gamma$ from its root.
\end{setup}

\smallskip

\begin{remark}\label{r45}
Whenever we refer to “an edge incident to $x$ on the left of a ray”, we implicitly
assume $x$ is not the root of that ray.
\end{remark}

\begin{lemma}\label{lle44}
Assume Setup~\ref{ap44}.
\begin{enumerate}
\item[\textup{(1)}] (\textup{Left-side chandeliers}) For the sequence $R_{i_1},\dots,R_{i_k}$:
\begin{enumerate}[label=\textup{(\alph*)}]
\item If $2\le s\le k$, then the edge immediately preceding $\mathrm{first}(\partial_L R_{i_s},l_{uw}[\zeta_{i_{s-1}},\zeta_{i_s}))$ along $\partial_L R_{i_s}$ cannot be on the left of
$\,l_{uw}[\zeta_{i_{s-1}},\zeta_{i_s})$.
\item If $1\le s\le k-1$, then the edge immediately preceding $\mathrm{first}(\partial_R R_{i_s},l_{uw}(\zeta_{i_s},\zeta_{i_{s+1}}])$ along $\partial_R R_{i_s}$ cannot be on the left of
$\,l_{uw}(\zeta_{i_s},\zeta_{i_{s+1}}]$.
\end{enumerate}

\item[\textup{(2)}] (\textup{Right-side anti-chandeliers}) For the sequence $U_{j_1},\dots,U_{j_r}$:
\begin{enumerate}[label=\textup{(\alph*)}]
\item If $2\le t\le r$, then the edge immediately preceding $\mathrm{first}(\partial_R U_{j_t},l_{uw}[\xi_{j_{t-1}},\xi_{j_t}))$ along $\partial_R U_{j_t}$ cannot be on the right of 
$\,l_{uw}[\xi_{j_{t-1}},\xi_{j_t})$.
\item If $1\le t\le r-1$, then the edge immediately preceding $\mathrm{first}(\partial_L U_{j_t},l_{uw}(\xi_{j_t},\xi_{j_{t+1}}])$ along $\partial_L U_{j_t}$ cannot be on the right of
$\,l_{uw}(\xi_{j_t},\xi_{j_{t+1}}]$.
\end{enumerate}
\end{enumerate}
\end{lemma}

\begin{proof}
We prove \textup{(1a)} and \textup{(1b)}; all other parts follow from the same argument after
 interchanging
left and right (and replacing $R$ by $U$ when needed).

\medskip
\noindent\textbf{Proof of (1a).} Let 
\[
a:=\mathrm{first}( \partial_L R_{i_s}, l_{uw}[\zeta_{i_{s-1}},\zeta_{i_s}).
\]
Assume for contradiction that the edge along $\partial_L R_{i_s}$ immediately preceding $a$ is on the left of
$\,l_{uw}[\zeta_{i_{s-1}},\zeta_{i_s})$.

Set
\[
P:=l_{uw}[a,\zeta_{i_s}],
\]
so $P$ is a geodesic segment of $l_{uw}$.

\medskip\noindent
\textbf{Step 1: build a simple cycle.}
Let $\Gamma$ be the subpath of the boundary ray $\partial_L R_{i_s}$ joining
$\zeta_{i_s}$ to $a$ (traversed from $\zeta_{i_s}$ towards $a$).
By the choice of $a$ and the fact that boundary rays are self-avoiding,
the union
\[
C:=P\cup \Gamma
\]
is a simple cycle. Let $F_C$ be the set of faces in the bounded component of $\mathbb R^2\setminus C$.

\medskip\noindent
\textbf{Step 2: a unique interior face along $P$.}
By the construction of the points $\zeta_{i_s}$ on $l_{uw}$, every internal vertex of
$P$ (i.e.\ of $P\setminus\{\zeta_{i_s},a\}$) is incident to exactly one face on the left of $l_{uw}$.
Since the interior of $C$ lies on the left side of $P$, it follows that the interior of $C$
is incident to every edge of $P$ through a single face, denoted $f_0\in F_C$,
and $\partial f_0$ contains $P$ as a subwalk.

Write $m:=|E(P)|\ge 1$. Since $P$ is a geodesic from $a$ to $\zeta_{i_s}$,
any other $a$--$\zeta_{i_s}$ walk has length at least $m$; in particular the complementary
$a$--$\zeta_{i_s}$ arc of $\partial f_0$ has length at least $m$.
Hence
\[
|f_0|\ge 2m,
\]
and since every finite face has degree at least $3$, we may record
\[
|f_0|\ge \max\{2m,3\}.
\]

\medskip\noindent
\textbf{Step 3: Gauss--Bonnet contradiction.}
Apply Lemma~\ref{lem24} \eqref{eq:GB-ineq} to the simple cycle $C$:
\[
\sum_{v\in C\cap V}\ \sum_{f\in F_C:\,f\sim v}\frac{|f|-2}{|f|}\pi \ \le\ (|C|-2)\pi.
\]

Let $w(f):=\frac{|f|-2}{|f|}\pi$.

\emph{(i) Contribution from $P$.}
The face $f_0$ is incident to every vertex of $P$, hence for each $v\in P\cap V$,
\[
\sum_{f\in F_C:\,f\sim v} w(f)\ \ge\ w(f_0).
\]
Therefore the total contribution from vertices of $P$ is at least $(m+1)w(f_0)$.

\emph{(ii) Contribution from $\Gamma\setminus\{\zeta_{i_s},a\}$.}
Each internal vertex of $\Gamma$ lies on a boundary ray of a chandelier, so by the
defining turning rule of chandeliers the interior of $C$ occupies at least three face-sectors
around such a vertex. All those faces belong to $F_C$ and each has degree at least $3$,
hence each such vertex contributes at least $3\cdot(\pi/3)=\pi$.
There are $|\Gamma|-1$ internal vertices on $\Gamma$, so their total contribution is at least
$(|\Gamma|-1)\pi$.

Combining (i) and (ii) yields
\[
\sum_{v\in C\cap V}\ \sum_{f\in F_C:\,f\sim v} w(f)
\ \ge\ (|\Gamma|-1)\pi + (m+1)w(f_0).
\]
Since $|C|=m+|\Gamma|$, it remains to compare the right-hand side with $(|C|-2)\pi$.

If $m\ge 2$, then $|f_0|\ge 2m$ gives
\[
w(f_0)=\Bigl(1-\frac{2}{|f_0|}\Bigr)\pi\ \ge\ \Bigl(1-\frac1m\Bigr)\pi,
\]
so
\[
(|\Gamma|-1)\pi + (m+1)w(f_0)
\ \ge\ (m+|\Gamma|-2)\pi + \Bigl(1-\frac1m\Bigr)\pi
\ >\ (|C|-2)\pi,
\]
contradicting \eqref{eq:GB-ineq}.

If $m=1$, then $|f_0|\ge 3$ gives $w(f_0)\ge \pi/3$, hence
\[
(|\Gamma|-1)\pi + (m+1)w(f_0)
\ \ge\ (|\Gamma|-1)\pi + 2\cdot \frac{\pi}{3}
\ >\ (|\Gamma|-1)\pi
\ =\ (|C|-2)\pi,
\]
again contradicting \eqref{eq:GB-ineq}.

This contradiction proves \textup{(1a)}.

\medskip
\noindent\textbf{Proof of (1b).}
Let 
\[
v:=\mathrm{first}( \partial_R R_{i_s}, l_{uw}(\zeta_{i_{s}},\zeta_{i_{s+1}}]).
\]
Assume for contradiction that the edge along $\partial_R R_{i_s}$ immediately preceding $v$ is on the left of 
$\,l_{uw}(\zeta_{i_{s}},\zeta_{i_{s+1}}]$. 

Let $\langle \zeta_{i_s},x_1\rangle$  be the distinguished incident edge on the
left of $l_{uw}$ used to define $R_{i_s}$. Write the subpath of $l_{uw}$ from $\zeta_{i_s}$ to $v$ as
\[
\zeta_{i_s}(=z_c),z_{c+1},\dots,z_b(=v).
\]
Let $g\in\{c,\dots,b-1\}$ be maximal such that the distinguished left edge at $z_g$ is still
$\langle z_g,x_1\rangle$ (equivalently, $R$ is the last nonempty chandelier before $y$ whose
distinguished neighbor off $l_{uw}$ is $x_1$).
By construction of $\mathcal R_{uw}$, every vertex $z_{g+1},\dots,z_{b-1}$ is incident to no edge
on the left of $l_{uw}$. Hence the whole segment
\[
P:=l_{uw}[z_g,v]
\]
is incident (on its left side) to a \emph{single} face $f_0$, and $y_1\in \partial f_0$.
Set $m:=|E(P)|=b-g\ge 0$. Then always
\begin{equation*}
|f_0|\ge \max\{m+2,2m,3\},
\end{equation*}
\medskip\noindent
\textbf{Step 1: build a simple cycle.}
Let $\Gamma$ be the subpath of the boundary ray $\partial_L R_{i_s}$ joining
$x_1$ to $v$ .
By the choice of $v$ and the fact that boundary rays are self-avoiding,
the union
\[
C:=P\cup \Gamma\cup\langle \zeta_{i_s},x_1\rangle 
\]
is a simple cycle. Let $F_C$ be the set of faces in the bounded component of $\mathbb R^2\setminus C$.

\medskip\noindent
\textbf{Step 2: a unique interior face along $P$.}
By the construction of the points $\zeta_{i_s}$ on $l_{uw}$, every internal vertex of
$P$ (i.e.\ of $P\setminus\{z_g,v\}$) is incident to exactly one face on the left of $l_{uw}$.
Since the interior of $C$ lies on the left side of $P$, it follows that the interior of $C$
is incident to every edge of $P$ through a single face, denoted $f_0\in F_C$,
and $\partial f_0$ contains $P$ as a subwalk.

\medskip\noindent
\textbf{Step 3: Gauss--Bonnet contradiction.}
Apply Lemma~\ref{lem24} \eqref{eq:GB-ineq} to the simple cycle $C$:
\[
\sum_{v\in C\cap V}\ \sum_{f\in F_C:\,f\sim v}\frac{|f|-2}{|f|}\pi \ \le\ (|C|-2)\pi.
\]

Let $w(f):=\frac{|f|-2}{|f|}\pi$.

\emph{(i) Contribution from $P$.}
The face $f_0$ is incident to every vertex of $P$, hence for each $v\in P\cap V$,
\[
\sum_{f\in F_C:\,f\sim v} w(f)\ \ge\ w(f_0).
\]
Therefore the total contribution from vertices of $P$ is at least $(m+1)w(f_0)$.

\emph{(ii) Contribution from $\Gamma\setminus\{x_1,v\}$.}
Each internal vertex of $\Gamma$ lies on a boundary ray of a chandelier, so by the
defining turning rule of chandeliers the interior of $C$ occupies at least three face-sectors
around such a vertex. All those faces belong to $F_C$ and each has degree at least $3$,
hence each such vertex contributes at least $3\cdot(\pi/3)=\pi$.
There are $|\Gamma|-1$ internal vertices on $\Gamma$, so their total contribution is at least
$(|\Gamma|-1)\pi$.

\emph{(iii) Contribution from $x_1$.} The vertex $x_1$ has degree at least 7; from definition \ref{df41} we see that $x_1$ has a unique neighbor $x_2$ in $R_{i_s}$ other than $\zeta_{i_s}$, and moving along $\zeta_{i_s}\rightarrow x_1\rightarrow x_2$, exactly 3 faces are on the left at $x_1$. Moreover by Lemma $\ref{l42}$, $g\leq c+2$; by Lemma \ref{lem28}, every 3-cycle is a face; therefore moving along $z_g\rightarrow x_1\rightarrow x_2$, there are at least 2 faces on the right at $x_1$. Since each face has degree at least 3, the contribution of $x_1$ is at least $\frac{2\pi}{3}$.

Combining (i), (ii) and (iii) yields
\[
\sum_{v\in C\cap V}\ \sum_{f\in F_C:\,f\sim v} w(f)
\ \ge\ (|\Gamma|-1)\pi + (m+1)w(f_0)+\frac{2\pi}{3}.
\]
Since $|C|=m+|\Gamma|+1$, it remains to compare the right-hand side with $(|C|-2)\pi$.

If $m\ge 2$, then $|f_0|\ge 2m$ gives
\[
w(f_0)=\Bigl(1-\frac{2}{|f_0|}\Bigr)\pi\ \ge\ \Bigl(1-\frac1m\Bigr)\pi,
\]
so
\[
(|\Gamma|-1)\pi + (m+1)w(f_0)+\frac{2\pi}{3}
\ \ge\ (m+|\Gamma|-2)\pi + \Bigl(1-\frac1m\Bigr)\pi+\frac{2\pi}{3}
\ >\ (|C|-2)\pi,
\]
contradicting \eqref{eq:GB-ineq}.

If $m=1$, then $|f_0|\ge 3$ gives $w(f_0)\ge \pi/3$, hence
\[
(|\Gamma|-1)\pi + (m+1)w(f_0)+\frac{2\pi}{3}
\ \ge\ (|\Gamma|-1)\pi + 2\cdot \frac{\pi}{3}+\frac{2\pi}{3}
\ >\ |\Gamma|\pi
\ =\ (|C|-2)\pi,
\]
again contradicting \eqref{eq:GB-ineq}.

If $m=0$, $|C|=|\Gamma|+1$. Then $|f_0|\ge 3$ gives $w(f_0)\ge \pi/3$, hence
\[
(|\Gamma|-1)\pi + (m+1)w(f_0)+\frac{2\pi}{3}
\ \ge\  |\Gamma|\pi
\ >\ (|C|-2)\pi,
\]
again contradicting \eqref{eq:GB-ineq}.

This contradiction proves \textup{(1b)}.

The remaining statements follow by the same cycle construction applied to the corresponding
geodesic subsegment, with the extremal intersection chosen in the relevant direction,
and with left/right interchanged for the anti-chandeliers.
\end{proof}

\begin{lemma}\label{l43}
Suppose Setup~\ref{ap44} holds. Then
\begin{enumerate}
\item for every $1\le s\le k-1$, the edge along $\partial_R R_{i_s}$ immediately preceding  $\mathrm{first}(\partial_R R_{i_s},l_{uw}(\zeta_{i_s},\zeta_{i_{s+1}}]\cup\partial_L R_{i_{s+1}})$ cannot be on the left of 
$\partial_L R_{i_{s+1}}$;
\item for every $2\le s\le k$, the edge along $\partial_L R_{i_s}$ immediately preceding  $\mathrm{first}(\partial_L R_{i_s},l_{uw}[\zeta_{i_{s-1}},\zeta_{i_{s}})\cup\partial_R R_{i_{s-1}})$ cannot be on the right of 
$\partial_R R_{i_{s-1}}$;
\item for every $2\le t\le r$, the edge along $\partial_R U_{j_t}$ immediately preceding $\mathrm{first}(\partial_R U_{j_t},l_{uw}[\xi_{j_{t-1}},\xi_{j_t})\cup\partial_L U_{j_{t-1}})$ cannot be on the left of
$\partial_L U_{j_{t-1}}$.
\item for every $1\le t\le r-1$, the edge along $\partial_L U_{j_t}$ immediately preceding $\mathrm{first}(\partial_L U_{j_t},l_{uw}(\xi_{j_{t}},\xi_{j_{t+1}})\cup\partial_R U_{j_{t+1}})$ cannot be on the right of
$\partial_R U_{j_{t+1}}$.
\end{enumerate}
\end{lemma}

\begin{proof}
We prove the statement (1) here. The proof for statement (2)-(4) is identical after exchanging left/right throughout.

\medskip
\noindent\textbf{Step 1 (two consecutive chandeliers and the key face).}
Fix $\ell\in\{1,\dots,k-1\}$ and set
\[
R:=R_{i_\ell},\qquad R':=R_{i_{\ell+1}}.
\]
Let $x:=\rt(R)$ and $y:=\rt(R')$ be their roots on $l_{uw}$, ordered so that the $z$-index of $x$
is strictly smaller than that of $y$.
Let $\langle x,x_1\rangle$ and $\langle y,y_1\rangle$ be the distinguished incident edges on the
left of $l_{uw}$ used to define $R$ and $R'$ (so $x_1,y_1\notin l_{uw}$). Since $R\neq R'$,
we have $x_1\neq y_1$.

Write the subpath of $l_{uw}$ from $x$ to $y$ as
\[
x(=z_c),z_{c+1},\dots,z_b(=y).
\]
Let $g\in\{c,\dots,b-1\}$ be maximal such that the distinguished left edge at $z_g$ is still
$\langle z_g,x_1\rangle$ (equivalently, $R$ is the last nonempty chandelier before $y$ whose
distinguished neighbor off $l_{uw}$ is $x_1$).
By construction of $\mathcal R_{uw}$, every vertex $z_{g+1},\dots,z_{b-1}$ is incident to no edge
on the left of $l_{uw}$. Hence the whole segment
\[
P:=l_{uw}[z_g,y]
\]
is incident (on its left side) to a \emph{single} face $f_0$, and $y_1\in \partial f_0$.
Set $m:=|E(P)|=b-g\ge 1$. Then always
\begin{equation}\label{eq:l43-f0-m+2}
|f_0|\ge m+2,
\end{equation}
since $\partial f_0$ contains the distinct vertices $z_g,z_{g+1},\dots,z_b(=y),y_1$.
Moreover, if $x_1\in\partial f_0$, then
\begin{equation}\label{eq:l43-f0-m+3}
|f_0|\ge m+3,
\end{equation}
since $\partial f_0$ then also contains $x_1$.

\medskip
\noindent\textbf{Step 2 (assume a positive boundary intersection and form a simple cycle).}
Let
\begin{align*}
a:=\mathrm{first}(\partial_R R,l_{uw}(\zeta_{i_s},\zeta_{i_{s+1}}]\cup\partial_L R').
\end{align*}
Assume for contradiction that the edge along $\partial_R R$ immediately preceding  $\mathrm{first}(\partial_R R,l_{uw}(\zeta_{i_s},\zeta_{i_{s+1}}]\cup\partial_L R')$ is on the left of 
$\partial_L R'$.
Let $L$ be the ray of $\partial_R R$ starting from $x_1$ and going to infinity without passing
through $x$, and let $L'$ be the ray of $\partial_L R'$ starting from $y_1$ and going to infinity
without passing through $y$.
Define the extended rays
\[
\widetilde L:=\langle z_g,x_1\rangle\cup L,
\qquad
\widetilde L':=\langle y,y_1\rangle\cup L'.
\]

Since $\widetilde L[z_g,a]$ and
$\widetilde L'[y,a]$ do not hit $P$ except at their initial vertices $z_g$ and $y$, and boundary rays are self-avoiding, by the choice of $a$
the concatenation
\[
C:= P \ \cup\ \widetilde L[z_g,a]\ \cup\ \widetilde L'[y,a]
\]
is a simple cycle in the plane.
Let $F_C$ be the set of (finite) faces in the bounded component enclosed by $C$.

Since $\deg(v)\ge 7$ and every finite face has degree at least $3$, we have $\kappa\le 0$
(Lemma~\ref{lem:kappa-nonpos}), hence Gauss--Bonnet yields
\begin{equation}\label{eq:l43-GB}
\sum_{v\in C\cap V}\ \sum_{f\in F_C:\, f\sim v}\frac{|f|-2}{|f|}\pi \ \le\ (|C|-2)\pi.
\end{equation}

\medskip
\noindent\textbf{Step 3 (lower bound contradicting \eqref{eq:l43-GB}).}
For each finite face $f$, write $w(f):=\frac{|f|-2}{|f|}\pi$.
Since every $f\in F_C$ has $|f|\ge 3$, we have $w(f)\ge \pi/3$.

\smallskip
\noindent\emph{(i) Contribution from $P$.}
The face $f_0$ lies inside $C$, hence $f_0\in F_C$, and $f_0$ is incident to every vertex of $P$.
Therefore
\begin{equation}\label{eq:l43-P-lb}
\sum_{v\in P\cap V}\ \sum_{f\in F_C:\, f\sim v} w(f)\ \ge\ (m+1)\,w(f_0).
\end{equation}

\smallskip
\noindent\emph{(ii) Contribution from the boundary arcs.}
Every vertex of $C\setminus P$ lies on $\widetilde L[z_g,a]$ or on $\widetilde L'[y,a]$.
By the “extremal wedge” condition in Definition~\ref{df41}, the bounded region enclosed by $C$
occupies at least three face-sectors around each such vertex, except possibly at the first
intersection point $a$ and (when present on $C$ as a distinct vertex) at $x_1$.
Consequently, each non-exceptional vertex of $C\setminus P$ contributes at least $\pi$, while
$a$ contributes at least $\pi/3$.

By assumption, $a\neq z_g$.
We distinguish two possibilities for the first intersection point $a$.

\medskip
\noindent\textbf{Case 1: $a\notin\{z_g,y_1\}$.}
Then $y_1$ is a distinct vertex of $C$, and at $y_1$ the interior of $C$ contains at least two
faces of $F_C$ besides $f_0$, hence
\[
\sum_{f\in F_C:\, f\sim y_1} w(f)\ \ge\ w(f_0)+\frac{2\pi}{3}.
\]

\smallskip
\noindent\emph{Case 1a: $x_1\in\partial f_0$.} The contribution of $x_1$ is at least $w(f_0)+\frac{\pi}{3}$.
Using \eqref{eq:l43-f0-m+3} and \eqref{eq:l43-P-lb}, and counting contributions as above,
\[
\sum_{v\in C\cap V}\ \sum_{f\in F_C:\, f\sim v} w(f)
\ \ge\ (|C|-m-4)\pi \;+\; (m+3)w(f_0)\;+\;\frac{4\pi}{3}.
\]
Subtracting $(|C|-2)\pi$ gives
\[
\sum_{v\in C\cap V}\ \sum_{f\in F_C:\, f\sim v} w(f) - (|C|-2)\pi
\ \ge\ (m+3)w(f_0)-m\pi-\frac{2\pi}{3}.
\]
Since $|f_0|\ge m+3$, we have $w(f_0)=\left(1-\frac{2}{|f_0|}\right)\pi\ge\left(1-\frac{2}{m+3}\right)\pi$,
so
\[
(m+3)w(f_0)-m\pi-\frac{2\pi}{3}
\ \ge\ (m+3)\left(1-\frac{2}{m+3}\right)\pi - m\pi-\frac{2\pi}{3}
\ =\ \frac{1}{3}\pi \;>\;0,
\]
contradicting \eqref{eq:l43-GB}.

\smallskip
\noindent\emph{Case 1b: $x_1\notin\partial f_0$.}
Then at $z_g$ the two edges $\langle z_g,z_{g+1}\rangle$ and $\langle z_g,x_1\rangle$ force an
additional enclosed face of degree at least $3$, hence $z_g$ contributes at least $w(f_0)+\pi/3$.
Thus
\[
\sum_{v\in C\cap V}\ \sum_{f\in F_C:\, f\sim v} w(f)
\ \ge\ (|C|-m-3)\pi \;+\; (m+2)w(f_0)\;+\;\frac{4\pi}{3},
\]
and therefore
\[
\sum_{v\in C\cap V}\ \sum_{f\in F_C:\, f\sim v} w(f) - (|C|-2)\pi
\ \ge\ (m+2)w(f_0)-m\pi+\frac{\pi}{3}.
\]
By \eqref{eq:l43-f0-m+2}, $(m+2)w(f_0)\ge m\pi$, so the right-hand side is at least $\pi/3>0$,
again contradicting \eqref{eq:l43-GB}.

\medskip
\noindent\textbf{Case 2: $a=y_1$.}
Then $y_1$ is the unique intersection point used to close the cycle.

\smallskip
\noindent\emph{Case 2a: $x_1\in\partial f_0$.} The contribution of $x_1$ is at least $w(f_0)+\frac{\pi}{3}$.
Using \eqref{eq:l43-f0-m+3} and the same counting as in Case~1a (but without a separate
contribution from $a$), we get
\[
\sum_{v\in C\cap V}\ \sum_{f\in F_C:\, f\sim v} w(f) 
\ \ge\ 
(|C|-m-3)\pi+(m+3)w(f_0)+\frac{\pi}{3}
\;>\;(|C|-2)\pi,
\]
contradicting \eqref{eq:l43-GB}.

\smallskip
\noindent\emph{Case 2b: $x_1\notin\partial f_0$.}
As in Case~1b, the vertex $z_g$ contributes at least $w(f_0)+\pi/3$. Hence
\[
\sum_{v\in C\cap V}\ \sum_{f\in F_C:\, f\sim v} w(f) -(|C|-2)\pi
\ \ge\ (m+2)w(f_0)-m\pi+\frac{\pi}{3}
\ \ge\ \frac{\pi}{3}\;>\;0,
\]
again contradicting \eqref{eq:l43-GB}.

\medskip
All cases lead to a contradiction. Then (1) follows.
\end{proof}

\begin{lemma}\label{le46}
Suppose Setup~\ref{ap44} holds.
Let $x:=\zeta_{i_1}$ and $y:=\xi_{j_1}$.
Assume without loss of generality that $x$ precedes $y$ along $l_{uw}$.
Then the edge along $\partial_L R_{i_1}$
immediately preceding $\mathrm{first}(\partial_L R_{i_1},l_{uw}(x,y])$
cannot be on the right of $l_{uw}(x,y]$, and the edge along 
$\partial_R U_{j_1}$ immediately preceding $\mathrm{first}(\partial_R U_{j_1},l_{uw}[x,y))$ cannot be on the right of $l_{uw}[x,y))$.
(The opposite order $y\prec x$ follows by reversing the orientation of $l_{uw}$ and exchanging left/right.)

Similarly, let $x':=\zeta_{i_k}$ and $y':=\xi_{j_r}$ and assume WLOG that $x'$ precedes $y'$.
Then the edge along $\partial_R R_{i_k}$
immediately preceding $\mathrm{first}(\partial_R R_{i_k},l_{uw}(x',y'])$ cannot be on the left of 
 $l_{uw}(x',y']$, and the edge along
$\partial_R U_{j_r}$ immediately preceding $\mathrm{first}(\partial_R U_{j_r},l_{uw}[x',y'))$
cannot be on the left of $l_{uw}[x',y')$.
(The opposite order follows by symmetry as above.)
\end{lemma}

\begin{proof}
This is proved by the same Gauss--Bonnet cycle argument as in Lemma~\ref{lle44}:
choose the first/last forbidden boundary--geodesic intersection (with respect to the
orientation of $l_{uw}$), build a simple cycle using the corresponding geodesic subsegment
of $l_{uw}$ and the relevant boundary ray, identify the unique ``key'' face along that
geodesic subsegment using the extremality built into the choices of
$i_1,j_1,i_k,j_r$ in Setup~\ref{ap44}, and then apply \eqref{eq:GB-ineq} to obtain a
contradiction. We omit the repetitive details.
\end{proof}

\begin{lemma}\label{l44}
Suppose Setup \ref{ap44} holds.
Then
\begin{enumerate}
\item Let $x:=\operatorname{rt}(R_{i_1})\in l_{uw}$ and $y:=\operatorname{rt}(U_{j_1})\in l_{uw}$. WLOG, assume the $z$-index of $x$ is less than or equal to that of $y$. Then
\begin{enumerate}
\item the edge along $\partial_R U_{j_1}$ immediately preceding $\mathrm{first}(\partial_R U_{j_1},l_{uw}[x,y)\cup \partial_L R_{i_1})$ cannot be on the left of $\partial_L R_{i_1}$; and 
\item the edge along $\partial_L R_{i_1}$ immediately preceding $\mathrm{first}(\partial_L R_{i_1},l_{uw}(x,y]\cup \partial_R U_{j_1})$ cannot be on the right of $\partial_L U_{j_1}$; and 
\end{enumerate}
If the $z$-index of $y$ is strictly smaller than that of $x$, the same conclusions hold
after exchanging the roles of $x$ and $y$.
\item Let $x':=\operatorname{rt}(R_{i_k})\in l_{uw}$ and $y':=\operatorname{rt}(U_{j_r})\in l_{uw}$. WLOG, assume the $z$-index of $x'$ is less than or equal to that of $y'$. Then 
\begin{enumerate}
\item the edge along $\partial_R R_{i_k}$ immediately preceding $\mathrm{first}(\partial_R R_{i_k},l_{uw}(x',y']\cup\partial_L U_{j_r})$ cannot be on the left of $\partial_L U_{j_r}$.
\item the edge along $\partial_L U_{j_r}$ immediately preceding $\mathrm{first}(\partial_L U_{j_r},l_{uw}[x',y')\cup\partial_R R_{i_k})$ cannot be on the right of $\partial_R R_{i_k}$.
\end{enumerate}
If the $z$-index of $y'$ is strictly smaller than that of $x'$, the same conclusions hold
after exchanging the roles of $x'$ and $y'$.
\end{enumerate}
\end{lemma}

\begin{proof}We only prove (1a) and (2a) here; (1b) and (2b) can be proved similarly.

\medskip
\noindent\textbf{Proof of (1a).}
Write $U:=U_{j_1}$, $R:=R_{i_1}$,
$x:=\operatorname{rt}(R)\in l_{uw}$ and $y:=\operatorname{rt}(U)\in l_{uw}$. 

Let
\begin{align*}
b:=\mathrm{first}(\partial_R U,l_{uw}[x,y)\cup\partial_L R)
\end{align*}
Assume for contradiction that the edge along $\partial_R U$ immediately preceding $b$ is on the left of $\partial_L R$. So $b\neq y$; and if $x\neq y$
then $b\neq x$ by Remark \ref{r45}.

Let
\[
m:=|E(l_{uw}[x,y])|=d_G(x,y)\geq 0
\]

Let $P:=l_{uw}[x,y]$, and let $\partial_R R[x,b]$ (resp.\ $\partial_L U[y,b]$) denote the
initial segment of $\partial_R R$ from $x$ to $b$ (resp.\ of $\partial_L U$ from $y$ to $b$).
By the choice of $b$ as the first boundary intersection and by Lemma~\ref{le46}, Lemma \ref{l43}(4) and Lemma \ref{lle44} (2b), the three paths
\[
P,\qquad \partial_R R[x,b],\qquad \partial_L U[y,b]
\]
are internally vertex-disjoint. Hence their union
\[
C:=P\ \cup\ \partial_R R[x,b]\ \cup\ \partial_L U[y,b]
\]
is a simple cycle. Let $F_C$ be the set of faces in the bounded component enclosed by $C$.

\smallskip
\noindent\emph{Key face along the geodesic segment.}
From the construction of $\mathcal{U}_{uw}$ no vertex of $P\setminus\{x,y\}$ can have an edge entering the interior
of $C$.
Therefore the interior of $C$ is incident to the whole path $P$ through a single face
$f_0\in F_C$ whose boundary contains $P$.
Since $P$ is a geodesic from $x$ to $y$, the complementary $x$--$y$ arc of $\partial f_0$
has length at least $m$, hence
\begin{equation}\label{eq:l44-face-degree}
|f_0|\ge \max\{2m,3\}\qquad (m\ge 0).
\end{equation}

\smallskip
\noindent\emph{Gauss--Bonnet lower bound.}
For each finite face $f$, write $w(f):=\frac{|f|-2}{|f|}\pi$. Since every finite face has degree
at least $3$, we have $w(f)\ge \pi/3$.
Each vertex of the boundary arcs $\partial_R R[x,b]\cup \partial_L U[y,b]$ other than the special
vertices $x,y,b$ is incident to at least three faces of $F_C$ on the interior side
(by the boundary-sector condition in Definition~\ref{df41}), hence contributes at least $\pi$.
The intersection vertex $b$ contributes at least $\pi/3$.

If $m\ge 1$, every vertex of $P$ is incident to $f_0$, hence contributes at least $w(f_0)$, and at
 endpoint $x$ there is in addition at least one more enclosed face of degree $\ge 3$
coming from the turn into the boundary arcs, so $x$ contributes at least $w(f_0)+\pi/3$.
Let $|C|$ be the number of vertices of the cycle. Counting contributions gives
\begin{align*}
\sum_{v\in C\cap V}\ \sum_{f\in F_C:\, f\sim v} w(f)
&\ge (|C|-(m+2))\pi \;+\; (m+1)w(f_0)\;+\;\frac{2\pi}{3}\\
&= (|C|-2)\pi \;+\; \Bigl[(m+1)w(f_0)-\left(m-\frac{2}{3}\right)\pi\Bigr].
\end{align*}
If $m=1$, then $|f_0|\ge 3$ and so $w(f_0)\ge\pi/3$, hence the bracket equals $2w(f_0)-\frac{\pi}{3}>0$.
If $m\ge 2$, then \eqref{eq:l44-face-degree} implies
\[
w(f_0)=\left(1-\frac{2}{|f_0|}\right)\pi \ge \left(1-\frac{1}{m}\right)\pi,
\]
so
\[
(m+1)w(f_0)-\left(m-\frac{2\pi}{3}\right)\pi \ge (m+1)\left(1-\frac{1}{m}\right)\pi-\left(m-\frac{2}{3}\right)\pi=\left(\frac{2}{3}-\frac{1}{m}\right)\pi>0.
\]
In all cases $m\ge 1$ we obtain
\[
\sum_{v\in C\cap V}\ \sum_{f\in F_C:\, f\sim v} w(f) > (|C|-2)\pi,
\]
contradicting \eqref{eq:GB-ineq}.

If $m=0$ (i.e.\ $x=y$), then $C$ is formed by the two boundary arcs from $x$ to $b$.
All vertices of $C$ other than $x$ and $b$ contribute at least $\pi$ as above, while both $x$ and $b$
are incident to at least one face in $F_C$ of degree $\ge 3$, so each contributes at least $\pi/3$.
Hence
\[
\sum_{v\in C\cap V}\ \sum_{f\in F_C:\, f\sim v} w(f)
\ \ge\ (|C|-2)\pi+\frac{2\pi}{3}>(|C|-2)\pi,
\]
again contradicting \eqref{eq:GB-ineq}.

Then (1a) follows.

\medskip
\noindent\textbf{Proof of (2a).}
Write $U':=U_{j_r}$, $R':=R_{i_k}$.
$x':=\operatorname{rt}(R')\in l_{uw}$ and $y':=\operatorname{rt}(U')\in l_{uw}$. Let $\langle x',x_1\rangle$ (resp.~$\langle y',y_1\rangle$) be the distinguished incident edges on the
left (resp.~right) of $l_{uw}$ used to define $R'$ (resp.~$U'$); so $x_1,y_1\notin l_{uw}$.
Write the subpath of $l_{uw}$ from $x'$ to $y'$ as
\[
x'(=z_c),z_{c+1},\dots,z_b(=y').
\]
Let $g_1\in\{c,\dots,b-1\}$ be maximal such that the distinguished left edge at $z_{g_1}$ is still
$\langle z_{g_1},x_1\rangle$. Without loss of generality, assume $g_1\leq b$.
By construction of $\mathcal R_{uw}$, every vertex $z_{g_1+1},\dots,z_{b-1}$ is incident to no edge
on the left of $l_{uw}$. Hence the whole segment
\[
P:=l_{uw}[z_{g_1},z_{b}]
\]
is incident (on its left side) to a \emph{single} face $f_0$.
Set $m:=|E(P)|=b-g_1\ge 0$. Then always
\begin{align*}
|f_0|\ge \max\{2m,3\},
\end{align*}

Let
\begin{align*}
h:=\mathrm{first}(\partial_R R',l_{uw}(x',y']\cup\partial_L U')
\end{align*}
Assume for contradiction that the edge along $\partial_R R'$ immediately preceding $h$ is on the left of $\partial_L U'$. So $h\neq x'$; and if $x'\neq y'$
then $h\neq y'$ by Remark \ref{r45}.

By the choice of $h$ and by Lemma~\ref{le46}, Lemma \ref{l43}(3) and Lemma \ref{lle44} (2a), the union
\[
C:=P\ \cup\ \partial_R R[x_1,h]\ \cup\ \partial_L U[y',h]\cup\langle z_{g_1},x_1\rangle
\]
is a simple cycle. Let $F_C$ be the set of faces in the bounded component enclosed by $C$.

\smallskip
\noindent\emph{Gauss--Bonnet lower bound.}
For each finite face $f$, write $w(f):=\frac{|f|-2}{|f|}\pi$. Since every finite face has degree
at least $3$, we have $w(f)\ge \pi/3$.
Each vertex of the boundary arcs $\partial_R R[x_1,h]\cup \partial_L U[y',h]$ other than the special
vertices $x_1,y',h$ is incident to at least three faces of $F_C$ on the interior side
(by the boundary-sector condition in Definition~\ref{df41}), hence contributes at least $\pi$.
The intersection vertex $h$ contributes at least $\pi/3$.

If $m\ge 1$, every vertex of $P$ is incident to $f_0$, hence contributes at least $w(f_0)$. The contribution at $y'$ is at least $w(f_0)+\frac{5\pi}{3}$

If $x_1\notin \partial f_0$
at
 endpoint $x$ there is in addition at least one more enclosed face of degree $\ge 3$
coming from the turn into the boundary arcs, so $x$ contributes at least $w(f_0)+\pi/3$. From the Proof of Lemma \ref{lle44}(1b) Step 4(iii), we see the contribution at $x_1$ is at least $\frac{2\pi}{3}$.
Let $|C|$ be the number of vertices of the cycle. Counting contributions gives
\begin{align*}
\sum_{v\in C\cap V}\ \sum_{f\in F_C:\, f\sim v} w(f)
&\ge (|C|-(m+3))\pi \;+\; (m+1)w(f_0)\;+\;3\pi\\
&= (|C|-2)\pi \;+\; \Bigl[(m+1)w(f_0)-\left(m-2\right)\pi\Bigr].
\end{align*}

If $x_1\in \partial f_0$,
 $x$ contributes at least $w(f_0)$;  the contribution at $x_1$ is at least $\frac{\pi}{3}+w(f_0)$.
Then
\begin{align*}
\sum_{v\in C\cap V}\ \sum_{f\in F_C:\, f\sim v} w(f)
&\ge (|C|-(m+3))\pi \;+\; (m+2)w(f_0)\;+\;
\frac{7\pi}{3}\\
&= (|C|-2)\pi \;+\; \Bigl[(m+2)w(f_0)-\left(m-\frac{4}{3}\right)\pi\Bigr].
\end{align*}

If $m=1$, then $|f_0|\ge 3$ and so $w(f_0)\ge\pi/3$, hence the bracket is positive.
If $m\ge 2$, then \eqref{eq:l44-face-degree} implies
\[
w(f_0)=\left(1-\frac{2}{|f_0|}\right)\pi \ge \left(1-\frac{1}{m}\right)\pi,
\]
so
\[
(m+1)w(f_0)-\left(m-2\right)\pi \ge (m+1)\left(1-\frac{1}{m}\right)\pi-\left(m-2\right)\pi=\left(2-\frac{1}{m}\right)\pi>0.
\]
and
\[
(m+2)w(f_0)-\left(m-\frac{4}{3}\right)\pi \ge (m+2)\left(1-\frac{1}{m}\right)\pi-\left(m-
\frac{4}{3}\right)\pi=\left(\frac{7}{3}-\frac{2}{m}\right)\pi>0.
\]
In all cases $m\ge 1$ we obtain
\[
\sum_{v\in C\cap V}\ \sum_{f\in F_C:\, f\sim v} w(f) > (|C|-2)\pi,
\]
contradicting \eqref{eq:GB-ineq}.

If $m=0$ (i.e.\ $x=y$), then $C$ 
\begin{align*}
C=\partial_R R[x_1,h]\ \cup\ \partial_L U[y',h]\cup\langle z_{g_1},x_1\rangle
\end{align*}
All vertices of $C$ other than $z_{g_1}(=y'),x_1,h$ contribute at least $\pi$ as above, while both $z_{g_1}=y'$ contributes at least $\frac{5\pi}{3}$, $h$ contributes at leastt $\pi/3$, and $x_1$ contributes at least $\frac{2\pi}{3}$
Hence
\[
\sum_{v\in C\cap V}\ \sum_{f\in F_C:\, f\sim v} w(f)
\ \ge\ (|C|-3)\pi+\frac{8\pi}{3}>(|C|-2)\pi,
\]
again contradicting \eqref{eq:GB-ineq}.

Then (2a) follows.

\end{proof}

Recall that $G_*$ is the matching graph of $G$ as defined in Definition \ref{df64}.

\begin{lemma}\label{l96}Let $G=(V,E)$ be a graph satisfying the assumptions of Definition \ref{df41}. Then for each $p< 1-p_c^{site}(T)$ (Recall that $1-\pcs(T)>\frac{1}{2}$), there exists $c_p>0$, such that for any $u,v\in V$, 
\begin{align}
  \mathbb{P}_p(u\leftrightarrow v) \leq \mathbb{P}_p(u\xleftrightarrow{*} v)\leq e^{-c_pd_{G_*}(u,v)}.\label{cl1}
\end{align}
where $u\leftrightarrow v$ means that $u$ and $v$ are in the same 1-cluster, while $u\xleftrightarrow{*} v$ means that $u$ and $v$ are in the same 1-*-cluster (1-cluster in the graph $G_*$). 

Moreover, 
\begin{align}
\mathbb{P}_p(\partial_V^*u\xleftrightarrow{*}\partial_V^*v)\leq e^{-c_pd_{G_*}(u,v)}\label{cl2}
\end{align}
where $\partial_V^*u$ consists of all the vertices adjacent to $u$ in $G_*$.
\end{lemma}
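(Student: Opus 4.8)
The plan is to show that along a geodesic from $u$ to $v$ there are $\gtrsim d_{G_*}(u,v)$ pairwise disjoint \emph{barriers}, each of which --- with probability bounded away from $0$, uniformly in the fixed $p$ --- becomes an all-closed bi-infinite simple curve in $\RR^2$ separating $u$ from $v$, and then to conclude by independence. Fix distinct $u,v\in V$, let $l_{uv}=u(=z_0),z_1,\dots,z_n(=v)$ be a shortest path in $G$, and run the chandelier/anti-chandelier construction preceding the lemma: this produces chandeliers $R_1,\dots,R_k$ on the left of $l_{uv}$ and anti-chandeliers $U_1,\dots,U_r$ on the right, which by the disjointness lemma are pairwise disjoint on each side, and a chandelier meets an anti-chandelier only in a common root (their first edges point to opposite sides of $l_{uv}$). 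Since every vertex has degree at least $7$, at each internal vertex $z_i$ the two incident edges of $l_{uv}$ leave at least two faces on one of the two sides, so $z_i$ carries a chandelier or an anti-chandelier unless it is incident to an infinite face on the relevant side; together with the lemma $a_j\ne a_{j+3}$ this gives $\gtrsim n$ distinct members of the family.

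To count barriers against $d_{G_*}$ rather than $d_G$ (they differ when finite faces are large), I would observe that a maximal stretch of $l_{uv}$ running along the boundary of a single finite face $f$ --- the only place where the construction may yield few barriers --- consists of vertices that are pairwise adjacent in $G_*$, so it contributes only $O(1)$ to $d_{G_*}(u,v)$; a stretch along an infinite face is harmless since an infinite face is a region of $\RR^2\setminus G$ through which a barrier may be routed for free. Altogether one extracts $N\ge c_1 d_{G_*}(u,v)$ pairwise disjoint ``barrier data'' $D_1,\dots,D_N$, where each $D_i$ consists of a chandelier $R_{s(i)}$ and a matching anti-chandelier $U_{t(i)}$ rooted within $O(1)$ of the root of $R_{s(i)}$ along $l_{uv}$ (or an infinite face in place of one of them), together with the short connecting arc of $l_{uv}$, which one may take of length $\le 1$ at the cost of a further constant factor. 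For each $i$ let $F_i$ be the event that inside $R_{s(i)}$ there is a ray of closed vertices from its root to infinity, the same inside $U_{t(i)}$, and every vertex of the connecting arc is closed. Each chandelier contains an embedded copy of the tree $T$ of Lemma \ref{l59}(1); since $p<1-\pcs(T)$, Bernoulli$(1-p)$ site percolation on $T$ is supercritical, so a fixed vertex of $T$ lies in an infinite closed cluster with probability $\beta=\beta(p)>0$, whence $\mathbb{P}_p(F_i)\ge \varepsilon$ for some $\varepsilon=\varepsilon(p)>0$. The $F_i$ depend only on vertex-states in the pairwise disjoint sets $D_i$, hence are independent.

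It remains to show $\{u\leftrightarrow v\}\subseteq\bigcap_i F_i^{\,c}$, and the analogous inclusions for $\xleftrightarrow{*}$ and for $\{\partial_V^*u\xleftrightarrow{*}\partial_V^*v\}$; this is the topological heart of the matter and the step I expect to require essentially all the work. On $F_i$ the closed vertices forming the two rays together with the connecting arc trace, using the proper embedding, a bi-infinite simple curve $\gamma_i\subset\RR^2$ meeting $l_{uv}$ only in the connecting arc, hence separating the arc-component of $l_{uv}$ containing $u$ from the one containing $v$; since edges of a planar graph cross only at vertices, no open path of $G$ can cross $\gamma_i$ without passing through one of its (closed) vertices, so $F_i$ forbids $u\leftrightarrow v$, and taking the barriers disjoint from $\partial_V^*u\cup\partial_V^*v$ (discarding the $O(1)$ members near the endpoints) it forbids $\partial_V^*u\leftrightarrow\partial_V^*v$ as well. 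For the $\xleftrightarrow{*}$ statements one invokes that $(G,G_*)$ is a matching pair in the sense of Sykes--Essam and Kesten: a bi-infinite closed path built from $E$-edges blocks $*$-connectivity in $G_*$, provided a single $E_*\setminus E$ edge cannot ``jump'' the barrier --- which is exactly what uniformly bounded finite-face degree ensures, since such an edge joins two vertices of a common finite face of bounded size, so the barrier may be thickened by a bounded amount to absorb all such shortcuts. Combining, since the $F_i$ are independent with $\mathbb{P}_p(F_i)\ge\varepsilon$ and $N\ge c_1 d_{G_*}(u,v)$,
\[
\mathbb{P}_p(u\leftrightarrow v)\ \le\ \prod_{i=1}^N \mathbb{P}_p(F_i^{\,c})\ \le\ (1-\varepsilon)^{N}\ \le\ e^{-c_p d_{G_*}(u,v)},
\]
and the same estimate holds for $\mathbb{P}_p(u\xleftrightarrow{*}v)$ and $\mathbb{P}_p(\partial_V^*u\xleftrightarrow{*}\partial_V^*v)$ with the bounded-finite-face-degree hypothesis in force.
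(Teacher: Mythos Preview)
Your argument follows the paper's approach almost exactly: build disjoint chandelier/anti-chandelier pairs along a geodesic, observe that stretches with no branching on one side lie along a single face and hence contribute $O(1)$ to $d_{G_*}$, and conclude by independence of the ``barrier'' events. The paper organises the same ingredients into an alternating sequence $R_{i_1},U_{i_1},R_{i_2},\dots$ with $d_{G_*}(R_{i_s},U_{i_s})\le 1$ and $d_{G_*}(U_{i_s},R_{i_{s+1}})\le 3$, giving $\ge\lfloor d_{G_*}(u,v)/3\rfloor-1$ disjoint barriers; your counting is the same in spirit.

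There is one genuine confusion, however, in how you explain the role of the bounded--finite--face--degree hypothesis. You write that a closed bi-infinite path built from $E$-edges blocks $*$-connectivity ``provided a single $E_*\setminus E$ edge cannot jump the barrier'', and that bounded face degree is what prevents this. That is not the mechanism. A simple bi-infinite $G$-path $\gamma$ of closed vertices \emph{always} blocks $1$-$*$-paths, with no hypothesis on face degrees: any face $f$ of $G$ lies entirely in one complementary component of $\gamma$, so a diagonal of $f$ with both endpoints open has both endpoints on the same side of $\gamma$. No thickening is needed. What actually distinguishes the two statements is the \emph{connector} between the chandelier root $x_j$ and the anti-chandelier root $y_j$. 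For \eqref{cl1} the paper uses a $0$-$*$-path joining $x_j$ to $y_j$; since a $0$-$*$-curve blocks $1$-paths in $G$, and since $d_{G_*}(x_j,y_j)=O(1)$ regardless of face degrees, the event $E_j$ has probability $\ge\theta_p>0$ uniformly. For \eqref{cl2} one needs the barrier to block $1$-$*$-paths, so the connector must be a $0$-path in $G$ (two diagonals of the same face can cross, so a $*$-connector would leak); here bounded finite-face degree is what guarantees $d_G(x_j,y_j)=O(1)$, hence $\PP_p(F_j)\ge\phi_p>0$ uniformly. Your own connector is ``the short connecting arc of $l_{uv}$'', which is already a $G$-path, so if your claim that it can be taken of length $\le 1$ were fully justified you would in fact not need the hypothesis at all; but that claim deserves more care than you give it, and the paper sidesteps the issue by using the $*$-connector for \eqref{cl1} and invoking bounded face degree only for \eqref{cl2}.
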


\begin{proof}Let $l_{uv}$ be the shortest path of $G$ joining $u$ and $v$, with $u=z_0$ and $v=z_n$. Construct a sequence of chandeliers and empty sets $R_1,R_2,\ldots$ on the left of $l_{uv}$ and a sequence of anti-chandeliers and empty sets $U_1,U_2,\ldots$ on the right of $l_{uv}$.

Then one can find an alternating sequence
\begin{align*}
R_{i_1}(=R_1),U_{j_1},R_{i_2},U_{j_2},\ldots
\end{align*}
of chandeliers, anti-chandeliers and empty sets whose roots have increasing $z$-indices, such that
\begin{enumerate}[label=(\Alph*)]
\item The distance between the root $R_{i_l}$ and the root of $U_{j_l}$ in $G$ is at most 3; and 
\item the distance between the root of $U_{j_l}$ and the root of $R_{i_{l+1}}$ in $G_*$ is at most 6 and at least 1;
\item Distinct items in the sequence are pairwise disjoint.
\end{enumerate}
More precisely the sequence can be constructed as follows:
\begin{enumerate}[label=(\alph*)]
\item Assume $R_{i_l}$ is rooted at $\zeta_{i_l}$ and $U_{j_l}$ is rooted at $\xi_{j_l}$ along $l_{u,v}$. Then the $z$-index of $\zeta_{i_l}$ is less than or equal to the $z$-index of $\xi_{j_l}$. Moreover, there are neither elements in $\mathcal{R}_{uv}\setminus \{R_{i_l}\}$ nor elements in $\mathcal{U}_{uv}\setminus \{U_{j_l}\}$  with root along  of $l_{uv}[\zeta_{i_l},\xi_{j_l}]$;
\item Let $R_{i_1},U_{j_1}$ be the pair satisfying condition (a) whose $\zeta_{i_1}$ (the root of $R_{i_1}$) has the minimal $z$-index; for each $l\geq 1$, let $R_{i_{l+1}},U_{j_{l+1}}$ be the pair satisfying condition (a) whose $\zeta_{i_{l+1}}$ has the minimal $z$-index strictly greater than the $z$-index of $\xi_{j_l}$.
\end{enumerate}
Then (A) follows from Lemma \ref{l42}, and the fact that the graph $G$ has minimal vertex degree at least 7; hence at each interior point $w$ along $l_{uv}$; either $w$ has at least two incident faces on the left of $l_{u,v}$, or $w$ has at least two incident faces on the right of $l_{uv}$.

To see why (B) is true, from (A) and (a) we see that the $z$-index of $\zeta_{i_l}$ is less than or equal to that of $\xi_{j_l}$, and moreover, $\zeta_{i_l}$ has maximal $z$-index among all the roots of chandeliers in $\mathcal{R}_{uv}$ whose $z$-indices do not exceed that of $\xi_{j_l}$; similarly, $\xi_{j_l}$ has minimal $z$-index among all the roots of anti-chandeliers in $\mathcal{U}_{uv}$ whose $z$-indices is greater than or equal to that of $\zeta_{i_l}$. By Lemma \ref{l42}, starting from $\zeta_{i_l}$, moving along $l_{uv}$ to the right by at most 3 faces (with $G_*$ distance at most 3), there exists a chandelier $R'$ (with root $\zeta'$) in $\mathcal{R}_{uv}\setminus \{R_{i_l}\}$. Then the $z$-index of $\xi_{j_l}$ is strictly less than that of $\zeta'$. Find $U'$ in $\mathcal{U}_{uv}$ whose root $\xi'$
has maximal $z$-index among all the roots of anti-chandeliers in $\mathcal{U}_{uv}$ whose roots have $z$-indices strictly less than that of $\zeta'$.
Again by Lemma \ref{l42},  starting from $\xi'$, moving along $l_{uv}$ to the right by at most 3 faces (with $G_*$ distance at most 3), there exists an anti-chandelier $U''$ (with root $\xi''$) in $\mathcal{U}_{uv}\setminus \{U_{i_l}\}$; from the choice of $U'$ and $U''$ we see that the $z$-index of $\xi''$ is greater than or equal to that of $\zeta'$. It follows that there exists a path from $\xi_{j_l}$ to $\zeta'$ in $G_*$ with length at most 3, and a path form $\zeta'$ to $\xi''$ in $G_*$ with length at most $3$, and the $z$-index of $\zeta_{i_{l+1}}$ is less than or equal to that of $\xi''$.  Then
\begin{align*}
d_{G_*}(\xi_{j_l},\zeta_{i_{l+1}})\leq  6
\end{align*}

(C) follows from Lemmas \ref{l43} and \ref{l44}.
See Figure \ref{fig:ef}.

Let $\{Q_r:=R_{i_r}\cup U_{j_r}\}_{r}$;
 then we can find at least $k:=\left\lfloor \frac{d_{G_*}(u,v)}{9} \right\rfloor-1$ $Q_r$'s along $l_{u,v}$.  If $p<1- \pcs(T) $, we have $1-p>\pcs(T)$. Let $x_r$ be the root of $R_{i_r}$ and $y_r$ be the root of $U_{j_r}$.   Then there is a positive probability $\theta_p>0$ such that all the following occur
 \begin{enumerate}
 \item $x_r$ is in an infinite 0-cluster of $R_{i_r}$; and
 \item $y_r$ is in an infinite 0-cluster of $U_{j_r}$; and
 \item $x_r$ and $y_r$ are joined by a 0-path along $l_{uv}$.
 \end{enumerate}

 Let $E_r$ be the event that all of (1)--(3) occur.
By (C) and the construction of the alternating sequence, the vertex-sets
\[
Q_r\ \cup\ \bigl(l_{uv}[x_r,y_r]\cap V\bigr),\qquad r=1,2,\dots,k,
\]
are pairwise disjoint; hence $\{E_r\}_{r=1}^k$ are independent.
Moreover, on $E_r$ the closed vertices contain a left--right obstruction across $l_{uv}$
(with two infinite $0$-clusters attached on the left/right and a $0$-path along $l_{uv}$
joining $x_r$ to $y_r$), so no open $*$-path can pass from $u$ to $v$. In particular,
\[
\{u\xleftrightarrow{*} v\}\subseteq \bigcap_{r=1}^k E_r^{\mathrm c}.
\]
Therefore,
\begin{align}
\mathbb{P}_p(u\xleftrightarrow{*} v)
&\le \mathbb{P}_p\Bigl(\bigcap_{r=1}^k E_r^{\mathrm c}\Bigr)
 = \prod_{r=1}^k \mathbb{P}_p(E_r^{\mathrm c})
 \le (1-\theta_p)^k
 \le (1-\theta_p)^{\frac{d_{G_*}(u,v)}{10}},
\label{cs1}
\end{align}
where we used $k=\left\lfloor \frac{d_{G_*}(u,v)}{9} \right\rfloor-1\ge \frac{d_{G_*}(u,v)}{10}$
once $d_{G_*}(u,v)$ is large enough.
Set $c_p:=-\frac{\log(1-\theta_p)}{10}>0$. Since the desired bound is trivial for the
finitely many pairs with $d_{G_*}(u,v)$ bounded, by decreasing $c_p$ if necessary we obtain
\eqref{cl1} for all $u,v\in V$.

For \eqref{cl2}, apply the same construction to a shortest $G_*$-path joining
$\partial_V^*u$ to $\partial_V^*v$. This yields a constant $\varphi_p>0$ and at least
$\frac{d_{G_*}(u,v)}{9}-3$ disjoint blocks producing separating events, so
\begin{align}
\mathbb{P}_p(\partial_V^*u\xleftrightarrow{*}\partial_V^*v)
\le (1-\varphi_p)^{\frac{d_{G_*}(u,v)}{9}-3}
\le e^{-c_p d_{G_*}(u,v)},
\label{cs2}
\end{align}
after possibly decreasing $c_p$ once more. This proves \eqref{cl2}.
\end{proof}

 \begin{remark}\label{r67}
The proof of Lemma \ref{l96} shows that when $G$ is a graph satisfying the assumptions
of Definition \ref{df41},
\begin{align*}
\frac{1}{2}<1-p_c^{\mathrm{site}}(T)\leq p_{\mathrm{exp}}(G_*)\leq p_{\mathrm{conn}}(G_*),
\end{align*}
where
\begin{align*}
p_{\exp}(G)
:=\sup\Bigl\{p\in(0,1):\ \exists\,C,\gamma\in(0,\infty)\ \text{s.t.}\ 
\mathbb{P}_p(x\leftrightarrow y)\leq Ce^{-\gamma d_G(x,y)},\ \forall x,y\in V\Bigr\},
\end{align*}
and $p_{\mathrm{conn}}(\cdot)$ is defined in (\ref{df87}).
\end{remark}

\begin{lemma}\label{lem68}
Let $G=(V,E)$ be a graph satisfying the assumptions of Definition \ref{df41}.
Then for any $n\in\NN$, there exists $M_n\in\NN$, such that for any $v,w\in V$ with
$d_G(v,w)\ge M_n$, there exist three trees $T_a,T_b,T_c$ rooted at $w$, each isomorphic
to the tree $T$ constructed in the proof of Proposition \ref{l59}, such that all the following
conditions hold:
\begin{itemize}
\item $T_a$ (resp.\ $T_b$) has boundaries $l_{w,1}$ and $l_{w,2}$ (resp.\ $l_{w,3}$ and $l_{w,4}$),
where each $l_{w,i}$ is a singly infinite path starting at $w$;
\item $l_{w,2}$ and $l_{w,3}$ share an edge $\langle w,x\rangle$ for some $x\in V$;
\item the (open) region $R_w\subset\RR^2$ including
$[T_a\cup T_b]\setminus\{l_{w,1}\cup l_{w,4}\}$ and bounded by $l_{w,1}$ and $l_{w,4}$
satisfies
\begin{align*}
B(v,n)\subset R_w,
\end{align*}
where
\begin{align*}
B(v,n):=\{u\in V:\ d_G(v,u)\le n\};
\end{align*}
\item $T_c\cap R_w=\emptyset$.
\end{itemize}
\end{lemma}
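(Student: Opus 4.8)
The plan is to use the negative-curvature / Gauss--Bonnet machinery already developed (Lemma \ref{l48}, Corollary \ref{l49}, and the inequality (\ref{fc3})) together with the tree-construction of Lemma \ref{l59}(1) and the disjointness arguments used in its proof, applied now at the base vertex $w$ rather than at $v$. First, apply Lemma \ref{l59}(1) three times, ``spreading out'' three copies of the standard tree around $w$: pick five edges incident to $w$ in cyclic order $e_1,\dots,e_5$, so that consecutive ones bound at least one common face, and use $e_1,e_2$ (together with the intervening face sectors having $\geq 3$ faces on the appropriate side) to seed the boundaries $l_{w,1},l_{w,2}$ of $T_a$; $e_2,e_3$ to seed $l_{w,3},l_{w,4}$ of $T_b$ (so $l_{w,2},l_{w,3}$ can be taken to share the single edge $(w,x)$, where $x$ is the endpoint of the edge lying strictly between the two sectors); and the remaining edges to seed a third tree $T_c$ on the far side of $w$. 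Because each vertex of $G$ has degree at least $7$, there is enough room to choose these five edges with the required ``three faces on each side'' condition, exactly as in the construction of $\pi_0,\pi_1$ and of the chandelier in Definition \ref{df41}.

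Next, establish the disjointness and embedding properties. That $T_a,T_b,T_c$ are themselves trees (each branch self-avoiding, distinct branches meeting only at the starting vertex) is exactly Lemma \ref{l59}(1) and the polygon-contradiction arguments in its proof: any accidental self-intersection or cross-intersection would produce a cycle $C$ bounding a region $R_C$ on whose boundary the curvature estimate forces $\sum_{z\in C}\sum_{f\in R_C:f\sim z}\frac{|f|-2}{|f|}\pi>(|C|-2)\pi$, contradicting (\ref{fc3}); the only bookkeeping one must redo is that $w$ may be incident to only $1$ or $2$ faces inside such a polygon (rather than $3$), which is precisely the slack already absorbed in the proofs of Lemma \ref{l59}(1) (the ``$v$ and $v_1$ contribute at least $\pi$'' step) and Lemma \ref{lem68}'s analogue for chandeliers. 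The key new point is that $T_a\cup T_b$, bounded by its two outer boundaries $l_{w,1}$ and $l_{w,4}$, cuts $\RR^2$ into two pieces; let $R_w$ be the open piece containing the interior of $T_a\cup T_b$, and take $T_c$ seeded by edges of $w$ pointing into the complementary piece, so $T_c\cap R_w=\emptyset$ automatically (again using that an intersection would create a forbidden cycle).

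Finally, arrange $B(v,n)\subset R_w$ by choosing $M_n$ large. The region $R_w$ contains $T_a\cup T_b$, an infinite subtree of $G$ with vertices of degree $3$ or $4$; in particular $R_w$ contains arbitrarily large balls of $G$. Concretely, since $G$ is connected and locally finite, only finitely many vertices of $G$ lie outside $R_w$ within any bounded distance of $w$ along $l_{w,1}$ and $l_{w,4}$; more to the point, the complement $\RR^2\setminus R_w$ meets only the boundary paths $l_{w,1},l_{w,4}$ and the side where $T_c$ lives, so a vertex $u$ fails to lie in $R_w$ only if $u$ is ``close to'' one of these boundaries. Using that the boundary paths $l_{w,1},l_{w,4}$ are infinite self-avoiding paths and that $G$ is locally finite, for each $n$ there is a finite set $F_n\subset V$ (a neighbourhood of $w$ of radius $M_n$, to be defined) such that every vertex outside $B(w,M_n)$ that is within $G$-distance $n$ of the boundary $\partial R_w$ actually lies in $R_w$; equivalently, choose $M_n$ so that $d_G(v,w)\geq M_n$ forces $B(v,n)$ to avoid a neighbourhood of $\partial R_w$ and hence lie entirely in $R_w$. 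This last paragraph is where the only real work is, and it is the main obstacle: one must verify that ``deep inside'' the wedge bounded by $l_{w,1}$ and $l_{w,4}$ the $G$-metric ball of radius $n$ cannot escape $R_w$, which follows because any path from such a ball to $\RR^2\setminus R_w$ must cross $l_{w,1}$ or $l_{w,4}$, and the $G$-distance from $v$ to these boundary paths tends to infinity as $d_G(v,w)\to\infty$ (a vertex at distance $<M_n-n$, say, from $w$ cannot be within distance $n$ of $l_{w,1}\cup l_{w,4}$ once $v$ is chosen in the interior far from $w$ — this uses that $l_{w,1},l_{w,4}$ emanate from $w$ and the wedge opens up). Taking $M_n$ to be this bound completes the proof.
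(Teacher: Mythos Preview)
Your construction has a structural gap: you fix $T_a,T_b,T_c$ (and hence the region $R_w$) \emph{before} looking at $v$, and then try to show $B(v,n)\subset R_w$ for every $v$ far from $w$. But $l_{w,1}\cup l_{w,4}$ splits $\RR^2$ into two unbounded pieces, and the complementary piece---where your $T_c$ sits---contains vertices arbitrarily far from $w$. For such $v$ you have $v\notin R_w$ outright, so $B(v,n)\subset R_w$ is hopeless. Your sentence ``a vertex $u$ fails to lie in $R_w$ only if $u$ is close to one of these boundaries'' is simply false: the entire $T_c$-side is outside $R_w$ and is not close to $\partial R_w$.

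The paper repairs this by letting the trees depend on $v$. It plants $\deg(w)\ge 7$ trees $T_1,\dots,T_{\deg(w)}$ at $w$ in cyclic order, with wedges $S_i$ satisfying $\bigcup_i S_i=\RR^2$ and $S_i\cap S_{i+2}=\{w\}$. Given $v$, it locates the one or two adjacent wedges containing $v$, takes $T_a,T_b$ to be those two trees (so $v\in R_w$ by construction), and takes $T_c$ two indices further around (so $T_c\cap R_w=\emptyset$).

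Even after this fix, the step you yourself flag as ``the main obstacle''---that $d_G(v,\partial R_w)\to\infty$ as $d_G(v,w)\to\infty$---needs a real argument; ``the wedge opens up'' is not one, and in a general non-quasi-transitive graph there is no a priori reason the two boundary rays could not come within bounded distance of each other infinitely often. The paper's argument is indirect and uses percolation: if, say, $d_{[S_i\cap G]\setminus B(w,k)}(l_{w,1},l_{w,2})$ stayed bounded along a subsequence of $k$'s, the region $S_i\cap G$ would have infinitely many bounded cutsets and hence $p_c^{site}(S_i\cap G)=1$; but $S_i\cap G$ contains the embedded tree $T_i$ of exponential growth, forcing $p_c^{site}<1$, a contradiction. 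This percolation obstruction is the substantive content of the lemma and is absent from your outline.
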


\begin{proof}
We first construct $T_a,T_b,T_c$.

Let $T_1,T_2,\ldots,T_{\deg(w)}$ be the $\deg(w)$ trees rooted at $w$, each of which is
isomorphic to $T$, arranged in cyclic order around $w$.
For $1\le i\le \deg(w)$, let $S_i\subset\RR^2$ be the closed region bounded by the two
boundary rays of $T_i$ and containing $T_i$.
Note that $\RR^2=\bigcup_{1\le i\le \deg(w)} S_i$, and
$S_i\cap S_j=\{w\}$ if $j\notin\{i-1,i+1\}$ (with indices understood cyclically).
Hence $v$ belongs to at most two of the $S_i$'s.

The following cases might occur.
\begin{itemize}
\item $v\in S_i\cap S_{i+1}$. In this case, let $T_a:=T_i$, $T_b:=T_{i+1}$, and $T_c:=T_{i+3}$
(indices taken cyclically).
\item $v\in S_i$ and $v\notin S_j$ for all $j\neq i$.
Let $l_1,l_2$ be the two boundary rays of $T_i$, each a singly infinite path starting at $w$,
and $l_1\cap l_2=\{w\}$.
Without loss of generality, assume
\begin{align*}
d_{\,G\setminus B\!\left(w,\left\lfloor\frac{d_G(v,w)}{2}\right\rfloor\right)}(v,l_1)
\le
d_{\,G\setminus B\!\left(w,\left\lfloor\frac{d_G(v,w)}{2}\right\rfloor\right)}(v,l_2).
\end{align*}
The following cases might occur.
\begin{itemize}
\item the edge $\langle w,t\rangle$ along $l_1$ is also on the boundary of $T_{i-1}$.
In this case, let $T_a:=T_{i-1}$, $T_b:=T_i$, and $T_c:=T_{i+2}$.
\item the edge $\langle w,t\rangle$ along $l_1$ is also on the boundary of $T_{i+1}$.
In this case, let $T_a:=T_i$, $T_b:=T_{i+1}$, and $T_c:=T_{i+3}$.
\end{itemize}
\end{itemize}
Define $R_w$ to be the (open) region bounded by the outer boundary rays $l_{w,1}$ and
$l_{w,4}$ coming from the chosen pair $(T_a,T_b)$, and containing
$[T_a\cup T_b]\setminus\{l_{w,1}\cup l_{w,4}\}$.

Now we show that
\begin{align}
\lim_{d_G(v,w)\to\infty} d_G\bigl(v,\partial \overline{R_w}\bigr)=\infty,
\label{dgi}
\end{align}
and the convergence is uniform in $w$.
It is straightforward that the lemma follows from \eqref{dgi} (take $M_n$ so that
$d_G(v,\partial\overline{R_w})>n$ whenever $d_G(v,w)\ge M_n$).

For each $k\ge 1$, let
\begin{align*}
\partial B(w,k):=\{u\in V:\ d_G(w,u)=k\}.
\end{align*}

Without loss of generality, assume
\begin{align*}
R_w=\bigl[S_i\cup S_{i+1}\bigr]^{\circ},
\end{align*}
(the case $R_w=[S_{i-1}\cup S_i]^{\circ}$ is analogous).
Set
\begin{align}
D_1(k)&:=d_{\,([S_i\cap G]\setminus B(w,k))}\bigl(l_{w,1},l_{w,2}\bigr)\ \ge\ 2^k,\label{lmf1}\\
D_2(k)&:=d_{\,([S_{i+1}\cap G]\setminus B(w,k))}\bigl(l_{w,3},l_{w,4}\bigr)\ \ge\ 2^k,\label{lmf2}\\
D_3(k)&:=d_{\,(((R_w\setminus S_i)\cap G)\setminus B(w,k))}\bigl(l_{w,2},l_{w,4}\bigr)\ \ge\ 2^{k-1},\label{lmf3}\\
D_4(k)&:=d_{\,(((R_w\setminus S_{i+1})\cap G)\setminus B(w,k))}\bigl(l_{w,1},l_{w,3}\bigr)\ \ge\ 2^{k-1}.\label{lmf4}
\end{align}
All four inequalities can be proved similarly. For example, to see why \eqref{lmf4} holds,
let $w_0$ be the neighbor of $w$ on the boundary of $S_i$ that does not lie in $S_{i+1}$.
Then there is a tree $T_{w_0}$ rooted at $w_0$ isomorphic to $T_i$ with one boundary ray
given by $l_{w,1}\setminus \langle w,w_0\rangle$ and satisfying
\begin{align*}
T_{w_0}\subset \bigl(\overline{R_w}\setminus S_{i+1}\bigr)\cap G.
\end{align*}
All vertices of $T_i$ whose distance (in $T_i$) to $w_0$ is at most $k-1$ are removed in
$\bigl[((\overline{R_w}\setminus S_{i+1})\cap G)\setminus B(w,k)\bigr]$.
Therefore any path joining $l_{w,1}$ and $l_{w,3}$ inside
$\bigl[((\overline{R_w}\setminus S_{i+1})\cap G)\setminus B(w,k)\bigr]$
must pass through at least $2^{k-1}$ vertices, and \eqref{lmf4} follows.

For each $k\ge 1$, define
\begin{align*}
\mathcal{L}\bigl(v,\partial\overline{R_w},k\bigr)
:=\Bigl\{\,l:\ l\ \text{is an SAW joining $v$ and $\partial\overline{R_w}$, and } d_G(l,w)=k\,\Bigr\},
\end{align*}
and set
\begin{align*}
q\bigl(v,\partial\overline{R_w},k\bigr):=\min\bigl\{|l|:\ l\in\mathcal{L}(v,\partial\overline{R_w},k)\bigr\},
\end{align*}
with the convention $\min\emptyset=\infty$. Then
\begin{align*}
d_G\bigl(v,\partial\overline{R_w}\bigr)=\min_{1\le k\le d_G(v,w)} q\bigl(v,\partial\overline{R_w},k\bigr).
\end{align*}

If $1\le k\le \frac{d_G(v,w)}{2}$, then for any such SAW $l$ we must have
$|l|\ge d_G(v,w)-k\ge \frac{d_G(v,w)}{2}$, hence
\begin{align*}
q\bigl(v,\partial\overline{R_w},k\bigr)\ge \frac{d_G(v,w)}{2}.
\end{align*}

If $\frac{d_G(v,w)}{2}\le k\le d_G(v,w)$, then
\begin{align*}
q\bigl(v,\partial\overline{R_w},k\bigr)
&\ge d_{\,G\setminus B(w,k)}\bigl(v,\partial\overline{R_w}\bigr)\\
&\ge d_{\,G\setminus B\left(w,\left\lfloor \frac{d_G(v,w)}{2}\right\rfloor\right)}\bigl(v,\partial\overline{R_w}\bigr)\\
&\ge \frac{1}{2}\min\left\{
D_1\!\left(\left\lfloor \frac{d_G(v,w)}{2}\right\rfloor\right),
D_2\!\left(\left\lfloor \frac{d_G(v,w)}{2}\right\rfloor\right),
D_3\!\left(\left\lfloor \frac{d_G(v,w)}{2}\right\rfloor\right),
D_4\!\left(\left\lfloor \frac{d_G(v,w)}{2}\right\rfloor\right)
\right\}\\
&\ge 2^{\left\lfloor \frac{d_G(v,w)}{2}\right\rfloor-2}.
\end{align*}

Combining the two cases yields \eqref{dgi} (uniformly in $w$). In particular, choosing
$M_n:=2n$ suffices, and the lemma follows.
\end{proof}

\begin{lemma}[Thickening a finite separator]
\label{le513}
Let \(G=(V,E)\) be an infinite connected locally finite graph properly embedded
in \(\mathbb R^2\), and let \(G^*\) be the matching graph associated with this
embedding, as in Definition~\ref{df64}. Suppose that \(G\) has
finitely many ends. Then there exists a finite set \(K\subset V\) such that

\begin{enumerate}
\item every component of \(G\setminus K\) is infinite and one-ended;
\item no edge of \(G^*\) joins two distinct components of \(G\setminus K\).
\end{enumerate}
\end{lemma}

\begin{proof}
Let \(m\) be the number of ends of \(G\). By the definition of ends, choose a
finite set \(K_0\subset V\) such that \(G\setminus K_0\) has exactly \(m\)
infinite components. Each of these infinite components is one-ended; otherwise,
by separating one of them further with a finite vertex set, one would obtain more
than \(m\) infinite components after deleting a finite set from \(G\), contrary
to the definition of \(m\).

Since \(G\) is connected and locally finite, \(G\setminus K_0\) has only
finitely many components. Enlarging \(K_0\) by the vertices of all finite
components of \(G\setminus K_0\), we may write
\[
G\setminus K_0=H_1\cup\cdots\cup H_m,
\]
where \(H_1,\ldots,H_m\) are infinite one-ended components.

Let
\[
\mathcal F(K_0)
:=
\{f:\ f\text{ is a finite face of }G
\text{ and }V(\partial f)\cap K_0\neq\varnothing\}.
\]
The set \(\mathcal F(K_0)\) is finite. Indeed, \(K_0\) is finite and each vertex
of \(K_0\) is incident to only finitely many face-sectors, since \(G\) is locally
finite. Since each \(f\in\mathcal F(K_0)\) has finite boundary, the set
\[
S:=\bigcup_{f\in\mathcal F(K_0)} V(\partial f)
\]
is finite.

We claim that every \(G^*\)-edge joining two distinct components of
\(G\setminus K_0\) has both endpoints in \(S\). Let \(x\in H_i\) and \(y\in H_j\)
with \(i\neq j\), and suppose that \(x\sim_{G^*}y\). This adjacency cannot be an
edge of \(G\), since then \(x\) and \(y\) would be connected in \(G\setminus K_0\).
Hence, by the definition of the matching graph, \(x\) and \(y\) lie on the
boundary of a common finite face \(f\). If
\[
V(\partial f)\cap K_0=\varnothing,
\]
then the boundary walk of \(f\) contains a path from \(x\) to \(y\) entirely in
\(G\setminus K_0\), again contradicting that \(x\) and \(y\) lie in distinct
components. Therefore \(V(\partial f)\cap K_0\neq\varnothing\), so
\(f\in\mathcal F(K_0)\), and hence \(x,y\in S\). This proves the claim.

Set \(K_1:=K_0\cup S\). Since each \(H_i\) is one-ended, the graph
\(H_i\setminus K_1\) has a unique infinite component; denote it by
\(H_i^\infty\). All other components of \(H_i\setminus K_1\) are finite. Define
\[
K:=V\setminus\bigcup_{i=1}^m V(H_i^\infty).
\]
Then \(K\) is finite, and the components of \(G\setminus K\) are precisely
\[
H_1^\infty,\ldots,H_m^\infty .
\]
Each of these components is infinite and one-ended.

It remains to prove the \(G^*\)-separation property. Suppose, for contradiction,
that a \(G^*\)-edge joins \(H_i^\infty\) to \(H_j^\infty\) for some \(i\neq j\).
Since \(H_i^\infty\subset H_i\) and \(H_j^\infty\subset H_j\), this is a
\(G^*\)-edge joining two distinct components of \(G\setminus K_0\). By the claim,
both of its endpoints belong to \(S\subset K\), contradicting
\(H_i^\infty,H_j^\infty\subset G\setminus K\). Therefore no edge of \(G^*\) joins
two distinct components of \(G\setminus K\).
\end{proof}

\begin{lemma}[Planar separation by closed \(*\)-clusters]
\label{lem514}
Let \(G\) be an infinite connected locally finite graph properly embedded in
\(\mathbb R^2\), and let \(G^*\) be its matching graph. Assume that \(G\) has
finitely many ends. For any site configuration \(\omega\), if the closed
vertices contain infinitely many infinite \(*\)-clusters, then
\[
\sum_{C\in\mathcal C_\infty^1(\omega)} \operatorname{Ends}(C)=\infty,
\]
where \(\mathcal C_\infty^1(\omega)\) denotes the collection of infinite open
clusters of \(G\).
\end{lemma}

\begin{proof}
Choose a finite set \(K_0\subset V(G)\) such that every component of
\(G\setminus K_0\) is infinite and one-ended, and no edge of \(G^*\) joins two
distinct components of \(G\setminus K_0\); this is possible by
Lemma~\ref{le513}. Since \(G\setminus K_0\) has only finitely many
components, one of them, say \(H\), intersects infinitely many infinite closed
\(*\)-clusters in infinite sets.

We claim that, for every \(N\ge1\), there is a finite set \(K_N\supset K_0\)
such that the union of infinite open clusters has at least \(N\) infinite
components after deleting \(K_N\). This claim implies the lemma, since for every
finite \(K\),
\[
\#\{\text{infinite components of }
(\bigcup_{C\in\mathcal C_\infty^1} C)\setminus K\}
\le
\sum_{C\in\mathcal C_\infty^1(\omega)} \operatorname{Ends}(C).
\]

Fix \(N\). Choose \(N+1\) distinct infinite closed \(*\)-clusters
\(\Xi_0,\ldots,\Xi_N\) whose intersections with \(H\) are infinite. For each
\(i\), choose a closed \(*\)-ray in \(\Xi_i\cap H\). Realize every \(*\)-edge
of this ray by an arc contained in the corresponding finite face. Since distinct
closed \(*\)-clusters do not share a finite face, these arcs may be chosen so
that the resulting proper curves are pairwise disjoint.

Take a disk \(D\) containing \(K_0\) and initial finite pieces of these curves,
so that their tails are disjoint outside \(D\). Order the tails cyclically on
\(\partial D\). Between two consecutive tails, consider the unbounded component
of the complement outside \(D\). In any finite truncation of this region, there
cannot be a closed \(*\)-crossing joining the two side boundaries, because the
two side boundaries belong to distinct closed \(*\)-clusters. Hence, by the
standard finite planar site-duality crossing argument, there is an open crossing
from the inner boundary to the outer boundary. Letting the truncation tend to
infinity and using local finiteness gives an infinite open path in that region.

Applying this to \(N\) consecutive pairs of tails gives \(N\) infinite open paths
lying in pairwise disjoint regions. After enlarging \(K_0\) to a finite set
\(K_N\) containing all vertices in \(D\) and the finitely many vertices incident
to faces meeting \(D\), these paths lie in distinct components of
\[
(\bigcup_{C\in\mathcal C_\infty^1} C)\setminus K_N,
\]
because any connection between two adjacent regions would have to cross one of
the closed \(*\)-rays. Thus the union of infinite open clusters has at least
\(N\) infinite components after deleting \(K_N\). Since \(N\) was arbitrary, the
desired sum of ends is infinite.
\end{proof}

\begin{theorem}\label{t97}
Let $G=(V,E)$ be a graph satisfying the assumptions of Definition \ref{df41}. The following statements hold.
\begin{enumerate}
\item For each $p\in \bigl(\pcs(G),\,1-\pcs(T)\bigr)$, there are a.s.\ infinitely many infinite $1$-clusters.
\item If $G$ has finitely many ends, for each $p\in \bigl[\,1-p_c(T),\,1-\pcs(G_*)\,\bigr)$, a.s.\ the \emph{total number of ends} of all infinite $1$-clusters is infinite, i.e.
\begin{align}
\sum_{C\in \mathcal{C}_\infty(\omega)} \mathrm{Ends}(C)=\infty,\label{ne}
\end{align}
where $\mathcal{C}_\infty(\omega)$ denotes the collection of infinite $1$-clusters in the percolation configuration $\omega$, and $\mathrm{Ends}(C)$ is the number of ends of the (connected) graph induced by $C$.
\item Assume that $G$ has infinitely many ends. For each $p\in \bigl[\,1-p_c(T),\,1\,\bigr]$, a.s.~(\ref{ne}) holds.
\end{enumerate}
\end{theorem}

\begin{proof}
When $p\in \bigl(\pcs(G),\,1-p_c(T)\bigr)$, the existence of infinitely many infinite $1$-clusters follows from Lemmas \ref{l83} and \ref{l96}.

\medskip
We next prove Part~\((2)\). Assume that \(G\) has finitely many ends and let
\[
p\in \bigl[\,1-p_c(T),\,1-p_c^{\mathrm{site}}(G^*)\,\bigr).
\]
Set \(q:=1-p\). Then
\[
q\in \bigl(p_c^{\mathrm{site}}(G^*),\,p_c(T)\bigr].
\]
Since \(p_c(T)<\tfrac12\), while Remark~\ref{r67} gives
\[
\tfrac12<1-p_c(T)\le p_{\mathrm{conn}}(G^*),
\]
we have
\[
q\le p_c(T)<\tfrac12<p_{\mathrm{conn}}(G^*).
\]
Therefore
\[
q\in \bigl(p_c^{\mathrm{site}}(G^*),\,p_{\mathrm{conn}}(G^*)\bigr).
\]
Applying Corollary~\ref{l83} to Bernoulli site percolation on \(G^*\) at parameter
\(q\), and observing that under \(\mathbb P_p\) the closed vertices have law
Bernoulli\((q)\), we obtain that \(\mathbb P_p\)-a.s. the closed vertices contain
infinitely many infinite \(0\)-\(*\)-clusters.

By Lemma~\ref{lem514}, these infinitely many infinite
\(0\)-\(*\)-clusters force
\[
\sum_{C\in\mathcal C_\infty(\omega)} \operatorname{Ends}(C)=\infty
\qquad \mathbb P_p\text{-a.s.}
\]
This proves Part~\((2)\).

\medskip
Now we prove Part~(3). Assume that $G$ has infinitely many ends. Fix a vertex $v_0\in V$.
Recall that $B(v_0,n)$ is the ball consisting of all vertices within graph distance $n$ of $v_0$ in $G$.
Let $G\setminus B(v_0,n)$ be the subgraph of $G$ obtained by removing all vertices in $B(v_0,n)$ and all edges incident to at least one vertex in $B(v_0,n)$.
Then the number of infinite components of $G\setminus B(v_0,n)$ tends to infinity as $n\to\infty$.

We first record the following claim.

\begin{claim}\label{cl69}
Let $H$ be an arbitrary infinite component of $G\setminus B(v_0,n)$. Then we can find a tree $T_H$ embedded in $H$ which is isomorphic to the tree $T$ constructed in the proof of Lemma \ref{l59}.
\end{claim}

\begin{proof}[Proof of Claim \ref{cl69}]
Let $u$ be a vertex in $H$ adjacent to a vertex in $B(v_0,n)$. Then $d_G(u,v_0)=n+1$.
Since $H$ is infinite and connected, we can find a directed singly infinite path in $H$ starting at $u$, denoted by
\[
z_0(:=u),z_1,z_2,\ldots.
\]
Since $G$ is locally finite, we have $\lim_{m\to\infty} d_G(v_0,z_m)=\infty$.
Choose $k$ so that $d_G(z_k,v_0)\ge M_n$, where $M_n$ is given by Lemma \ref{lem68}.
By Lemma \ref{lem68}, there exists a tree $T_H$ isomorphic to the one constructed in the proof of Proposition \ref{l59}$,$ such that $T_H\cap B(v_0,n)=\emptyset$.
Since $T_H$ is connected and contains $z_k\in H$, we have $T_H\subseteq H$.
\end{proof}

\medskip
Fix $n\ge 1$, and let $\mathcal{H}_n$ denote the set of infinite components of $G\setminus B(v_0,n)$.
For each $H\in \mathcal{H}_n$, pick one such embedded copy $T_H\subseteq H$ given by Claim \ref{cl69}.
Since $p\in[\,1-p_c(T),1\,]$ and $1-p_c(T)>\frac12>\pcs(T)$, we have $p>\pcs(T)$.
Therefore, for each fixed $H\in\mathcal{H}_n$, the i.i.d.\ Bernoulli($p$) site percolation restricted to $T_H$
a.s.\ contains an infinite $1$-cluster (in $T_H$, hence also in $G$).
In particular, a.s.\ for every $H\in\mathcal{H}_n$ there exists an infinite $1$-cluster of $G$ whose intersection with $H$ contains an infinite connected subgraph.

Consequently, a.s.\ the subgraph induced by the union of all infinite $1$-clusters has, after deleting $B(v_0,n)$, at least $|\mathcal{H}_n|$ infinite connected components (at least one inside each $H\in\mathcal{H}_n$).
On the other hand, if we denote by $\mathcal{C}_\infty(\omega)$ the set of infinite $1$-clusters in $\omega$, then for every finite set $K$ and every $C\in\mathcal{C}_\infty(\omega)$,
the graph $C\setminus K$ has at most $\mathrm{Ends}(C)$ infinite components by the definition of ends. Hence
\[
\#\{\text{infinite components of }(\cup_{C\in\mathcal{C}_\infty(\omega)} C)\setminus K\}
\le \sum_{C\in\mathcal{C}_\infty(\omega)} \mathrm{Ends}(C).
\]
Applying this with $K=B(v_0,n)$ and using the lower bound $\ge |\mathcal{H}_n|$, we get
\[
\sum_{C\in\mathcal{C}_\infty(\omega)} \mathrm{Ends}(C)\ \ge\ |\mathcal{H}_n|.
\]
Finally, since $G$ has infinitely many ends, $|\mathcal{H}_n|\to\infty$ as $n\to\infty$, and thus
\[
\sum_{C\in\mathcal{C}_\infty(\omega)} \mathrm{Ends}(C)=\infty
\qquad \text{a.s.}
\]
This proves Part~(3) and completes the proof of the theorem.
\end{proof}

\section{From infinitely many ends to infinitely many infinite clusters}\label{sect:5}

From Theorem~\ref{t97}, we know that when $\frac12<p<1-p_c^{site}(G_*)$,
$\mathbb{P}_p$-a.s.\ the total number of ends of infinite 1-clusters is infinite.
In this section we show that, under a mild hypothesis on long non-self-touching
polygons in $G_*$ (Assumption~\ref{ap53}), there exists $\delta>0$ such that for all
\[
1-p_c^{site}(G_*)-\delta<p<1-p_c^{site}(G_*),
\]
$\mathbb{P}_p$-a.s.\ there are infinitely many infinite $1$-clusters.
Note that by Lemmas~\ref{l83} and~\ref{l96}, when $p\in(\tfrac12,1-p_c^{site}(G_*))$
there are $\mathbb{P}_p$-a.s.\ infinitely many infinite $0$-$*$-clusters.

\begin{definition}\label{d01}
Let $G=(V,E)$ be an infinite, connected, planar, simple graph properly embedded into
$\RR^2$ with minimal vertex degree at least $7$. Fix once and for all a total order on $V$
(e.g.\ by enumerating vertices).

Let $\omega\in\{0,1\}^V$ be a site-percolation configuration on $G$ and
assume that $\xi$ is a $1$-ended infinite $0$-$*$-cluster in $\omega$.

Define $\mathcal{I}_{\xi}$ to be the collection of doubly infinite walks
\begin{equation}\label{dix}
I_{\xi}:=\ldots,w_{-n},\ldots,w_{-1},w_0,w_1,\ldots,w_n,\ldots
\end{equation}
in the matching graph $G_*$ satisfying:
\begin{itemize}
\item $w_i\in \xi$ for all $i\in\ZZ$;
\item $w_i$ and $w_{i+1}$ are $*$-adjacent for all $i\in\ZZ$;
\item orient $I_\xi$ by increasing indices. Then every vertex that is $*$-adjacent to
$I_\xi$ and lies on the \emph{right} side of the oriented walk (with respect to the fixed
planar embedding) has state $1$ in $\omega$;
\item the total number of distinct vertices visited by $I_\xi$ is infinite.
\end{itemize}
\end{definition}

\begin{definition}\label{df52}
Let $H$ be a graph and let $v_0,v_1,\ldots,v_n$ be a walk in $H$.
A \emph{touching pair} of this walk is a pair $(v_i,v_j)$ with
$0\le i<j\le n$, $j-i\ge 2$, and $d_H(v_i,v_j)=1$.

An $n$-step \emph{non-self-touching walk} on $H$ is an $n$-step self-avoiding walk with
no touching pairs.

A \emph{non-self-touching polygon} of length $n$ is a cycle
$v_0,v_1,\ldots,v_n(=v_0)$ such that for any $0\le i<j\le n$ with $n-2\ge j-i\ge2$ we have $d_H(v_i,v_j)\ge 2$.
\end{definition}

\begin{assumption}\label{ap53}
There exists $\delta>0$ such that for every
$p\in\bigl(p_c^{site}(G_*),\,p_c^{site}(G_*)+\delta\bigr)$ and every pair of vertices
$u,v\in V$ with $d_{G_*}(u,v)=1$,
\[
\lim_{m\to\infty}\mathbb{P}_p\bigl(u\xleftrightarrow{nstp,\ \ge m,\ 1*} v\bigr)=0,
\]
where $u\xleftrightarrow{nstp,\ \ge m,\ 1*} v$ denotes the event that the edge
$\langle u,v\rangle\in E_*$ belongs to a non-self-touching $1$-$*$-polygon in $G_*$ of
length at least $m$.
\end{assumption}

\begin{lemma}\label{l03}
Let $G$ satisfy the assumptions of Definition~\ref{df41} and assume
Assumption~\ref{ap53} holds with some $\delta>0$.
Then for every
$p\in\bigl(1-p_c^{site}(G_*)-\delta,\ 1-p_c^{site}(G_*)\bigr)$, $\mathbb{P}_p$-a.s.~not every end of infinite 0-*-clusters is isolated. In particular, this implies that
there exists an infinite 0-*-cluster with infinitely many ends.
\end{lemma}

\begin{proof}First of all, by the same arguments as on page 76 of \cite{bs96}, not every end of infinite 0-*-clusters is isolated implies that there exists an infinite 0-*-cluster with $2^{\aleph_0}$ ends; here $\aleph_0$ is the cardinality of any countably infinite set. Hence it suffices to show that not every end of infinite 0-*-clusters is isolated. Since this is a tail-event, the Kolmogorov 0-1 law implies that it has probability of either 0 or 1.

Fix $p\in\bigl(1-p_c^{site}(G_*)-\delta,\ 1-p_c^{site}(G_*)\bigr)$ and set $q:=1-p$.
Then $q\in(p_c^{site}(G_*),\,p_c^{site}(G_*)+\delta)$.
By Remark~\ref{r67} we have $q\in(p_c^{site}(G_*),p_{conn}(G_*))$, hence by Lemma~\ref{l83}
there are $\mathbb{P}_p$-a.s.\ infinitely many infinite $0$-$*$-clusters.

Assume for contradiction that a.s.,
\begin{equation}\label{eq:l03-contr}
\text{every end of infinite $0$-$*$-clusters is isolated.}
\end{equation}

Let $\overline{G}=(\overline{V},\overline{E})$ be a graph constructed from $G$ as follows: $\overline{V}$ is obtained from $V$ by adding an extra vertex in each finite face, $\overline{E}$ is obtained from $E$ by adding an extra edge between each vertex in $\overline{V}\setminus{V}$ (i.e.~an added vertex in a finite face $F$) and a vertex on the boundary of the face $F$. Then $\overline{G}$ is an infinite, connected, locally finite graph that can be properly embedded into $\RR^2$. For site configuration $\omega\in \{0,1\}^{V}$, define a site configuration $\overline{\omega}\in\{0,1\}^{\overline{V}}$ by letting $\ol{\omega}(v)=\omega(v)$ if $v\in V$ and $\omega(v)=0$ if $v\in \overline{V}\setminus V$. Let $\overline{G}_{\omega}$ be the induced subgraph of $\overline{G}$ by state-0 vertices in $\overline{\omega}$. Then  infinite 0-*-clusters in $\omega$ are in 1-1 correspondence with infinite components in $\overline{G}_{\omega}$. When (\ref{eq:l03-contr}) holds for $\omega$, every end of $\overline{G}_{\omega}$ is isolated. 

Since $\overline{G}_{\omega}$ is a locally finite planar graph, it admits a Freudenthal embedding into the sphere $\mathbb{S}^2$ (see Section 2 of \cite{ZL26}; see also Section 8.6 of \cite{Diestel2017}, Proposition 1.22 in \cite{JPM23}); and one can define a topology on the union $|\overline{G}_{\omega}|$ of $\overline{G}_{\omega}$ and its ends, such that the Fredenthal embedding of $|\overline{G}_{\omega}|$ into $\mathbb{S}^2$ is a homeomorphism from $|\overline{G}_{\omega}|$ onto its image. Under this embedding, each end of $|\overline{G}_{\omega}|$ corresponds an accumulation point of the embedding.
Since $\mathbb{S}^2$ is first countable and each end of $|\overline{G}_{\omega}|$ is isolated, in $\omega$ infinite 0-*-clusters have countably many ends if (\ref{eq:l03-contr}) holds.

Moreover, on \eqref{eq:l03-contr} by finite energy, with strictly positive probability there exists at least one $1$-ended infinite
$0$-$*$-cluster. For $v\in V$, let $\mathcal{E}_v$ be the event that $v$ belongs to a
$1$-ended infinite $0$-$*$-cluster. Then
$\mathbb{P}_p(\bigcup_{v\in V}\mathcal{E}_v)>0$, so by countability there exists $v_0\in V$
such that
\begin{equation}\label{eq:l03-ev0}
\mathbb{P}_p(\mathcal{E}_{v_0})=c_0>0.
\end{equation}
On $\mathcal{E}_{v_0}$, write $\xi:=C_{0*}(v_0)$; i.e.~$\xi$ is the 1-ended infinite 0-*-cluster at $v_0$.

\medskip\noindent
\textbf{Step 1: A forbidden configuration forces arbitrarily long $0$-$*$-polygons.}
Fix $n\ge1$. For $I_\xi\in\mathcal{I}_\xi$, define
\begin{equation}\label{dph}
\Phi(I_{\xi,n})
:=\{(u,v)\in[I_\xi\cap V]\times[I_\xi\cap V] : u=w_i,\ i<-n,\ v=w_j,\ j>n\}.
\end{equation}

\begin{claim}\label{cl55}
Let $n\ge1$. On $\mathcal{E}_{v_0}$, let $\xi$ is the 1-ended infinite 0-*-cluster at $v_0$. On the event
\[
\mathcal{E}_{v_0}\ \cap\
\Bigl\{\exists I_\xi\in\mathcal{I}_\xi\text{ such that }\forall (u,v)\in\Phi(I_{\xi,n}),
\ d_{G_*}(u,v)>2\Bigr\},
\]
there exist adjacent vertices $x,y\in V$ (equivalently an edge $\langle x,y\rangle\in E_*$)
such that for every $M\ge1$,
\[
x\xleftrightarrow{nstp,\ \ge M,\ 0*} y
\quad\text{occurs.}
\]
\end{claim}

We postpone the proof of Claim~\ref{cl55}.

\medskip\noindent
\textbf{Step 2: Use Assumption~\ref{ap53} to rule out the forbidden configuration.}
Since $q=1-p\in(p_c^{site}(G_*),p_c^{site}(G_*)+\delta)$, Assumption~\ref{ap53} applies at
parameter $q$. Moreover, for any adjacent $x,y$,
\[
\mathbb{P}_p\bigl(x\xleftrightarrow{nstp,\ \ge M,\ 0*} y\bigr)
=\mathbb{P}_q\bigl(x\xleftrightarrow{nstp,\ \ge M,\ 1*} y\bigr),
\]
because under $\mathbb{P}_p$ the $0$-vertices have the same law as the $1$-vertices under
$\mathbb{P}_{1-p}=\mathbb{P}_q$.

By Claim~\ref{cl55} and a union bound over edges of $G_*$,
\begin{align*}
&\mathbb{P}_p\Bigl(
\mathcal{E}_{v_0}\cap
\{\exists I_\xi\in\mathcal{I}_\xi:\forall (u,v)\in\Phi(I_{\xi,n}),\ d_{G_*}(u,v)>2\}
\Bigr)\\
&\qquad\le \sum_{\langle x,y\rangle\in E_*}
\mathbb{P}_p\Bigl(\bigcap_{M\ge1}\{x\xleftrightarrow{nstp,\ \ge M,\ 0*} y\}\Bigr)\\
&\qquad=\sum_{\langle x,y\rangle\in E_*}\lim_{M\to\infty}
\mathbb{P}_p\bigl(x\xleftrightarrow{nstp,\ \ge M,\ 0*} y\bigr)
=\sum_{\langle x,y\rangle\in E_*}\lim_{M\to\infty}
\mathbb{P}_q\bigl(x\xleftrightarrow{nstp,\ \ge M,\ 1*} y\bigr)=0,
\end{align*}
where the last equality is Assumption~\ref{ap53}.

Since $n$ was arbitrary, we deduce that on $\mathcal{E}_{v_0}$, for every $n\ge1$ and every
$I_\xi\in\mathcal{I}_\xi$, there exists $(u,v)\in\Phi(I_{\xi,n})$ with $d_{G_*}(u,v)\le2$.

\medskip\noindent
\textbf{Step 3: Close pairs yield bounded cutsets and $p_c^{site}(\xi)=1$.}
Fix a $1$-ended infinite $0$-$*$-cluster $\xi$ and $I_\xi\in\mathcal I_\xi$.
From Steps~1--2, for infinitely many $n$ there exists $(u,v)\in\Phi(I_{\xi,n})$
with $d_{G_*}(u,v)\le2$. Choose such pairs $(u_m,v_m)$ so that all $u_m,v_m$ are
pairwise distinct (possible since the indices tend to $\pm\infty$).

For each $m$, let $P_m$ be a shortest $G_*$-path from $u_m$ to $v_m$ (so $|E(P_m)|\le2$),
chosen to lie on the right side of the oriented walk $I_\xi$.
By the defining property of $\mathcal I_\xi$, the interior vertices of $P_m$ (if any)
have state $1$, hence $P_m$ intersects $\xi$ only at its endpoints.
The union of $P_m$ with the subwalk of $I_\xi$ from $u_m$ to $v_m$ contains a simple
cycle $C_m$ in the plane, whose bounded component contains only finitely many vertices
of $G$ (proper embedding). Consequently, any infinite $*$-path in $\xi$ from $v_0$ to
infinity must cross $C_m$, and since $\xi\cap C_m=\{u_m,v_m\}$, the set $\{u_m,v_m\}$
is a vertex cutset in $\xi$ separating $v_0$ from infinity.

Thus $\xi$ contains infinitely many pairwise disjoint vertex cutsets of size at most $2$.
It follows that $p_c^{site}(\xi)=1$: for any $s<1$, in Bernoulli($s$) site percolation on
$\xi$, the events “the $m$-th cutset contains an open vertex” are independent and have
probability at most $1-(1-s)^2<1$, hence the probability of crossing all cutsets is $0$.

\medskip\noindent
\textbf{Step 4: Sprinkling and contradiction.}
Assume \eqref{eq:l03-contr} holds $\PP_p$ a.s. 

Choose $p'>p$ so that $q':=1-p'>p_c^{site}(G_*)$, and couple $\omega_p,\omega_{p'}$ by i.i.d.\
uniforms $(U_v)_{v\in V}$. Conditional on $\omega_p$, passing from $p$ to $p'$ performs
independent thinning of the $0$-vertices with retention probability $q'/q<1$.
If $\PP_{p'}$-a.s.~there exists an infinite 0-*-cluster, then with strictly positive probability there exists a finite set of vertices $K$ such that removing $K$, there is an 1-ended infinite 0-*-cluster at level $p$ including an infinite 1-*-cluster at level $p'$. But this has probability 0 by Step 3 and the fact that the total number of ends (all isolated) of infinite 0-*-clusters is countable.

This contradicts $q'>p_c^{site}(G_*)$.
More precisely, by the definition of $p_c^{site}$ the
Bernoulli($q'$) percolation on $G_*$ has an infinite $1$-$*$-cluster almost surely (equivalently,
$\omega_{p'}$ has an infinite $0$-$*$-cluster almost surely). Hence \eqref{eq:l03-contr} has
probability $0$.
\end{proof}

\noindent\textbf{Proof of Claim~5.5.}
Fix $n\ge 1$ and work on the event
\[
\mathcal{E}_{v_0}\ \cap\ \Big\{\exists\, I_\xi\in\mathcal I_\xi\ \text{s.t.}\ 
d_{G^*}(u,v)>2\ \ \forall (u,v)\in \Phi(I_{\xi,n})\Big\}.
\]
Let $\xi=C_{0^*}(v_0)$ be the (infinite) $0$-$*$-cluster of $v_0$ and choose
\[
I_\xi=\big(\ldots,w_{-1},w_0,w_1,\ldots\big)\in\mathcal I_\xi
\]
satisfying $d_{G^*}(u,v)>2$ for all $(u,v)\in\Phi(I_{\xi,n})$.

\smallskip
\noindent\textbf{Step 1: A shortcut path and the “shielded” subpath.}
Since $\xi$ is connected in $G^*$, there exists a $0$-$*$-walk in $\xi$ joining
$w_{-n-1}$ to $w_{n+1}$. By applying loop-erasure and then repeatedly shortcutting
touching pairs (Definition~\ref{df52}) to this finite walk, we may assume it is
non-self-touching; denote it by
\[
\theta_n=(z_0,z_1,\ldots,z_k),\qquad z_0=w_{-n-1},\ z_k=w_{n+1}.
\]

We now locate the first/last places where the tails of $I_\xi$ come within $*$-distance $1$
from $\theta_n$.

Define
\[
\Gamma_1:=\Big\{m<-n-1:\ \exists\, i\in\{0,\ldots,k\}\text{ with }d_{G_*}(w_m,z_i)\le 1\Big\},
\qquad
Q_1:=\begin{cases}
\min \Gamma_1,& \Gamma_1\neq\emptyset,\\
-n-1,& \Gamma_1=\emptyset.
\end{cases}
\]
Then for every $r<Q_1$, the vertex $w_r$ is \emph{not} $*$-adjacent to (and not equal to)
any vertex of $\theta_n$.

Let $a$ be the maximal index such that $d_{G_*}(w_{Q_1},z_a)\le 1$.
(If $Q_1=-n-1$, set $a=0$ so $z_a=z_0$.)
Thus $w_{Q_1}$ is $*$-adjacent to $z_a$ but is not $*$-adjacent to any $z_j$ with $j>a$.

Next, restrict to the tail $\theta_{n,[a,k]}=(z_a,z_{a+1},\ldots,z_k)$ and define
\[
\Gamma_2:=\Big\{s>n+1:\ \exists\, j\in\{a,\ldots,k\}\text{ with }d_{G_*}(w_s,z_j)\le 1\Big\},
\qquad
Q_2:=\begin{cases}
\max \Gamma_2,& \Gamma_2\neq\emptyset,\\
n+1,& \Gamma_2=\emptyset.
\end{cases}
\]
Let $b$ be the minimal index in $\{a,\ldots,k\}$ such that $d_{G_*}(w_{Q_2},z_b)\le 1$.
(If $Q_2=n+1$, set $b=k$ so $z_b=z_k$.)

\smallskip
We claim that $b\ge a+1$. Indeed, if $b=a$, then both $w_{Q_1}$ and $w_{Q_2}$ are
within $*$-distance $1$ from the \emph{same} vertex $z_a$, hence
$d_{G_*}(w_{Q_1},w_{Q_2})\le 2$.
But $Q_1\le -n-1$ and $Q_2\ge n+1$ imply $(w_{Q_1},w_{Q_2})\in\Phi(I_{\xi,n})$,
contradicting the standing assumption that $d_{G_*}(u,v)>2$ for all $(u,v)\in\Phi(I_{\xi,n})$.
Therefore $b\ge a+1$, so the subpath $\theta_{n,[a,b]}$ contains at least one edge.

\smallskip
\noindent\textbf{Step 2: Splicing and producing a bi-infinite non-self-touching walk.}
Form a bi-infinite $0$-$*$-walk $W$ by following $I_\xi$ from $-\infty$ to $w_{Q_1}$,
then (if needed) one $*$-edge from $w_{Q_1}$ to $z_a$, then $\theta_{n,[a,b]}$,
then (if needed) one $*$-edge from $z_b$ to $w_{Q_2}$, and finally $I_\xi$ from $w_{Q_2}$ to $+\infty$.
All vertices of $W$ lie in $\xi$, hence are $0$ in the $*$-graph.

Now simplify $W$ as follows:
(i) erase loops (whenever the walk visits the same vertex twice, delete the enclosed cycle);
(ii) iteratively shortcut touching pairs (Definition~\ref{df52}) until none remain.
This yields a doubly infinite non-self-touching $0$-$*$-walk, denote it by $\widetilde I_\xi$.

By the construction of $Q_1,Q_2,a,b$, no vertex of $\theta_{n,[a,b]}$ is $*$-adjacent to
the part of $I_\xi$ strictly before $w_{Q_1}$ or strictly after $w_{Q_2}$, except through the
two junctions at $w_{Q_1}$ and $w_{Q_2}$.
Consequently, the only way the simplification could completely remove the shortcut
$\theta_{n,[a,b]}$ would be to create a shortcut of length at most $2$ in $G_*$ between
some $w_i$ with $i<-n$ and some $w_j$ with $j>n$, contradicting again
$d_{G_*}(u,v)>2$ for all $(u,v)\in\Phi(I_{\xi,n})$.
Therefore,
\[
\theta_{n,[a,b]}\ \subset\ \widetilde I_\xi
\quad\text{as a subwalk.}
\]

Pick an edge $e=\langle x,y\rangle\in E_*\cap \theta_{n,[a,b]}$.

\smallskip
\noindent\textbf{Step 3: From a doubly infinite walk to arbitrarily long non-self-touching polygons.}
For each $M\ge 1$, let $I_{\xi,e,M}$ be the finite subwalk of $\widetilde I_\xi$
obtained by taking $M$ steps in each direction starting from the edge $e$; thus
$I_{\xi,e,M}$ is a non-self-touching $0$-$*$-walk containing $e$ and having length $\ge 2M$.

Let $F_M$ be the finite vertex set consisting of
(i) all vertices of $I_{\xi,e,M}$ and
(ii) all vertices that are $*$-adjacent to an interior vertex of $I_{\xi,e,M}$.
Since $\xi$ is 1-ended, removing the finite set $F_M$ leaves a unique infinite component;
let $\widetilde \xi_M$ be this component.

Let $u$ and $v$ be the two endpoints of $I_{\xi,e,M}$, and let $u_1$ (resp.\ $v_1$) be the
neighbors of $u$ (resp.\ $v$) along $\widetilde I_\xi$ that lie outside $I_{\xi,e,M}$.
Because $\widetilde I_\xi$ is non-self-touching, the tails $\widetilde I_\xi\setminus I_{\xi,e,M}$
lie in $\widetilde \xi_M$, hence $u_1,v_1\in \widetilde \xi_M$.
Choose a (finite) $0$-$*$-path $t_M$ in $\widetilde \xi_M$ connecting $u_1$ to $v_1$.
By the definition of $F_M$, no vertex of $t_M$ is $*$-adjacent to any interior vertex of $I_{\xi,e,M}$.

Concatenate the walk
\[
u \to \cdots \to v \to v_1 \xrightarrow{\,t_M\,} u_1 \to u,
\]
which is a closed $0$-$*$-walk containing the edge $e$.
Finally, apply loop-erasure and touching-pair deletion to this closed walk.
Since $t_M$ stays at $*$-distance at least $2$ from the interior of $I_{\xi,e,M}$,
the shortcutting procedure cannot remove the portion of $I_{\xi,e,M}$ around $e$,
and we obtain a non-self-touching $0$-$*$-polygon $P_M$ that still contains $e$ and
has length at least $2M$.

Therefore, for the pair of adjacent vertices $(x,y)$ (the endpoints of $e$),
for every $M$ there exists a non-self-touching $0$-$*$-polygon of length $\ge 2M$
containing $\langle x,y\rangle$. Replacing $2M$ by $M$ concludes the claim.
$\hfill\Box$

\begin{lemma}\label{t53}
Let $G=(V,E)$ be an infinite, connected, planar graph properly embedded into $\RR^2$
with minimal vertex degree at least $7$. Suppose Assumption~\ref{ap53} holds.
Then there exists $\delta>0$ such that for each
$p\in\bigl(1-p_c^{site}(G_*)-\delta,\ 1-p_c^{site}(G_*)\bigr)$, $\mathbb{P}_p$-a.s.\
there are infinitely many infinite $1$-clusters.
\end{lemma}

\begin{proof}
Fix $p$ in the stated interval.

\medskip\noindent
\textbf{Case 1: $G$ has finitely many ends.}
Choose a finite subgraph $K$ such that $G\setminus K$ has finitely many infinite components $H_1,\ldots,H_n$
and each infinite component is $1$-ended. Then
\begin{align*}
p_c^{site}(G_*)=\min_{1\leq i\leq n}\{p_c^{site}([H_i]_*)\}
\end{align*}
Let $H\in\{H_1,\ldots,H_n\}$, such that $p_c^{site}(G_*)=p_c^{site}(H_*)$. Then $H$ is an infinite, connected, one-ended graph properly embedded into $\RR^2$.
We apply Lemma \ref{l03} to $H$, since $H$ is one-ended, $\PP_p$-a.s.~there are infinitely many infinite 1-clusters in $H$. Since $H$ is a component obtained by removing a finite set of vertices $K$ from $G$, we infer that $\PP_p$-a.s.~there are infinitely many infinite 1-clusters in $G$.

\medskip\noindent
\textbf{Case 2: $G$ has infinitely many ends.}
Fix $v_0\in V$ and set $K_i:=B(v_0,i)$.
Then the number $N_i$ of infinite components of $G\setminus K_i$ satisfies $N_i\to\infty$.

By Claim~\ref{cl69} (shrinking $\delta$ if needed so that $p>p_c^{site}(T)$),
for every $i$ and every infinite component $H$ of $G\setminus K_i$,
the restriction of $\omega$ to $H$ contains an infinite $1$-cluster almost surely.
Let $E_i$ be the event that \emph{every} infinite component of $G\setminus K_i$
contains an infinite $1$-cluster. Then
\begin{equation}\label{eq:Ei-a.s.}
\PP_p(E_i)=1\qquad\forall i\ge1.
\end{equation}

Assume for contradiction that $\omega$ has only finitely many infinite $1$-clusters.
As in the proof of Lemma~\ref{l03}, let \(\overline G=(\overline V,\overline E)\)
be the planar graph obtained from \(G\) by adding, for each finite face \(f\)
of \(G\), one new vertex \(z_f\) in the interior of \(f\), and joining \(z_f\)
to every vertex on \(\partial f\). Thus
\[
\overline V = V\cup\{z_f:\ f\text{ is a finite face of }G\}.
\]
Given a site configuration \(\omega\in\{0,1\}^V\), extend it deterministically
to a configuration \(\overline\omega\in\{0,1\}^{\overline V}\) by setting
\[
\overline\omega(v)=\omega(v)\quad (v\in V),
\qquad
\overline\omega(z_f)=0\quad\text{for every added face-vertex }z_f .
\]
Let
\[
\overline G_0(\omega)
:=
\overline G\bigl[\{x\in\overline V:\overline\omega(x)=0\}\bigr]
\]
be the subgraph of \(\overline G\) induced by the closed vertices.
Then infinite \(0\)-\(*\)-clusters of \(G\) are in one-to-one correspondence
with infinite connected components of \(\overline G_0(\omega)\).

For a subgraph \(H\subseteq \overline G\), let
\[
\operatorname{Emb}(H)
:=
\bigcup_{x\in V(H)}\{x\}
\;\cup\;
\bigcup_{e\in E(H)} e
\subset \mathbb R^2
\]
denote its embedded realization, where vertices and edges are identified with
their fixed embedded images in the plane.

Let \(\mathscr C_0^\infty(\overline\omega)\) be the collection of infinite
connected components of the closed subgraph
\[
\overline G_0(\omega)
:=
\overline G\bigl[\{x\in\overline V:\overline\omega(x)=0\}\bigr].
\]
Define
\[
\Theta
:=
\bigcup_{\mathcal K\in\mathscr C_0^\infty(\overline\omega)}
\operatorname{Emb}(\mathcal K)
\subset \mathbb R^2 .
\]
By Lemma~\ref{l03},  $\RR^2\setminus\Theta$ has infinitely many unbounded connected components.

Let $\mathcal U_\infty$ be the set of unbounded connected components of $\RR^2\setminus\Theta$,
and let
\[
\mathcal U_1:=\{U\in\mathcal U_\infty:\ U\text{ contains (equivalently, intersects) an infinite $1$-cluster}\}.
\]
Since we assumed that there are only finitely many infinite $1$-clusters and each infinite
$1$-cluster is contained in a single component of $\RR^2\setminus\Theta$,
the set $\mathcal U_1$ is almost surely finite.

\begin{claim}\label{cl:bridge} Under the assumptions of Lemma \ref{t53}, assume that a.s.~there are finitely many infinite 1-clusters.
On the event that $\mathcal U_\infty$ is infinite while $\mathcal U_1$ is finite,
there exists $i$ and an infinite component $H$ of $G\setminus K_i$ such that
 $H$ contains no infinite $1$-cluster.
\end{claim}

Assuming Claim~\ref{cl:bridge}, we get the desired contradiction:
on $\bigcap_i E_i$ (which has probability $1$ by \eqref{eq:Ei-a.s.}),
every infinite component of $G\setminus K_i$ contains an infinite $1$-cluster for every $i$,
whereas Claim~\ref{cl:bridge} produces an $i$ and an infinite component $H$ of $G\setminus K_i$
containing no infinite $1$-cluster. Therefore there must be infinitely many infinite $1$-clusters.
\end{proof}

\begin{lemma}[A complementary component is incident to at most two ends]
\label{lem68}
Let \(X\) be a connected locally finite graph properly embedded in
\(\mathbb R^2\), and let \(U\) be a connected component of
\(\mathbb R^2\setminus \operatorname{Emb}(X)\). For an end \(\varphi\) of
\(X\), and a finite set \(K\subset V(X)\), let \(\varphi(K)\) be the component
of \(X\setminus K\) representing \(\varphi\); see Definition \ref{dfn22}.

We say that \(U\) is incident to \(\varphi\) if
\[
\overline U\cap \operatorname{Emb}\bigl(\varphi(K)\bigr)\neq\varnothing
\]
for every finite \(K\subset V(X)\). Then \(U\) is incident to at most two ends
of \(X\).
\end{lemma}

\begin{proof}
Suppose, for contradiction, that \(U\) is incident to three distinct ends
\(\varphi_1,\varphi_2,\varphi_3\) of \(X\). Choose a finite set
\(K_0\subset V(X)\) separating these three ends, and choose pairwise disjoint
rays
\[
R_i\subset \varphi_i(K_0),\qquad i=1,2,3,
\]
representing them. Since \(X\) is connected, there is a finite connected
subgraph \(S\subset X\) meeting all three rays. Replacing the \(R_i\)'s by tails,
we may assume that
\[
Y:=S\cup R_1\cup R_2\cup R_3
\]
is a properly embedded tripod: outside a compact set it consists of three
pairwise disjoint proper rays.

Choose a closed Jordan set \(D\subset\mathbb R^2\) containing
\(\operatorname{Emb}(S)\) and finite initial segments of the three rays, such
that each remaining tail of \(R_i\) meets \(\partial D\) exactly once and then
stays in \(\mathbb R^2\setminus D\). Denote these three tails again by
\(R_1,R_2,R_3\). Their first intersection points with \(\partial D\) have a
cyclic order on \(\partial D\). The set
\[
D\cup \operatorname{Emb}(R_1)\cup
\operatorname{Emb}(R_2)\cup \operatorname{Emb}(R_3)
\]
divides \(\mathbb R^2\setminus D\) into three unbounded regions, each lying
between two consecutive tails.

Since \(Y\subset X\), the component \(U\) is contained in one connected
component of \(\mathbb R^2\setminus\operatorname{Emb}(Y)\). Hence, outside
\(D\), the set \(U\) lies in one of the three regions above. Such a region has
boundary contained in \(\partial D\) together with at most two of the tails
\(\operatorname{Emb}(R_1),\operatorname{Emb}(R_2),\operatorname{Emb}(R_3)\).

Let \(R_j\) be the remaining tail, not lying on the boundary of this region.
Enlarge \(K_0\) to a finite set \(K\subset V(X)\) containing all vertices of
\(X\) in \(D\). Then the component \(\varphi_j(K)\) contains a tail of
\(R_j\), while
\[
\overline U\cap \operatorname{Emb}\bigl(\varphi_j(K)\bigr)=\varnothing .
\]
This contradicts the assumption that \(U\) is incident to \(\varphi_j\).
Therefore \(U\) is incident to at most two ends of \(X\).
\end{proof}

\begin{definition}[Incidence of an infinite open cluster to a closed end]
\label{def:cluster-end-incidence}
Let \(\xi\) be an infinite connected component of
\(\overline G_0(\omega)\), and let \(C\) be an infinite \(1\)-cluster of \(G\).
Since \(\operatorname{Emb}(C)\) is connected and disjoint from
\(\operatorname{Emb}(\xi)\), it is contained in a unique connected component of
\[
\mathbb R^2\setminus \operatorname{Emb}(\xi).
\]
We denote this component by \(F_\xi(C)\).

If \(\varphi\) is an end of \(\xi\), we say that \(C\) is incident to
\(\varphi\) if the component \(F_\xi(C)\) is incident to \(\varphi\) in the
sense of Lemma~\ref{lem68}.
\end{definition}

\begin{remark}
\label{rem:finite-open-clusters-see-finitely-many-ends}
By Lemma~\ref{lem68}, each infinite \(1\)-cluster is incident to at most two
ends of \(\xi\). Hence, if there are only finitely many infinite \(1\)-clusters,
then these clusters are incident to only finitely many ends of \(\xi\).
\end{remark}

\begin{lemma}[A separating infinite face in a gap]
\label{lem:gap-dichotomy}
Let \(\xi\) be an infinite connected component of the closed subgraph
\(\overline G_0(\omega)\), and let \(F\) be an unbounded connected component of
\[
\mathbb R^2\setminus \operatorname{Emb}(\xi).
\]
Let \(\varphi\neq\psi\) be two ends of \(\xi\). Suppose that, for some finite
vertex set \(K_0\subset V(\xi)\) with $\varphi(K_0)\cap\psi(K_0)=\emptyset$, there are disjoint rays
\[
R_\varphi\subset \varphi(K_0),
\qquad
R_\psi\subset \psi(K_0),
\]
representing \(\varphi\) and \(\psi\), such that
\[
\operatorname{Emb}(R_\varphi)\cup\operatorname{Emb}(R_\psi)
\subset \partial F .
\]
Assume that \(F\) contains no infinite \(1\)-cluster of \(G\).

Then there exists an infinite face \(f\subset F\) of \(G\) ($f$ can be considered as a simply connected, unbounded, open subset of $\RR^2$) with the following
property. For every sufficiently large finite vertex set \(K\subset V(G)\), there
exists a closed Jordan disk \(D\subset\mathbb R^2\) containing the embedded
vertices of \(K\) such that, after replacing \(R_\varphi\) and \(R_\psi\) by
tails outside \(D\), every simple curve in \(\mathbb R^2\setminus D\) joining a
point of \(\operatorname{Emb}(R_\varphi)\) to a point of
\(\operatorname{Emb}(R_\psi)\), with all interior points in \(F\setminus D\),
must intersect \(f\).
\end{lemma}

\begin{proof}
Replacing \(R_\varphi\) and \(R_\psi\) by tails and enlarging \(K_0\), we may
assume that the two rays are disjoint and lie outside \(K_0\).

Choose a closed Jordan disk \(D_0\subset\mathbb R^2\) containing the embedded
vertices of \(K_0\) and finite initial segments of the two rays, such that the
remaining tails of \(R_\varphi\) and \(R_\psi\) meet \(\partial D_0\) exactly
once. Let
\[
D_0\subset D_1\subset D_2\subset\cdots
\]
be an exhaustion of \(\mathbb R^2\) by closed Jordan disks, chosen so that both
rays meet each \(\partial D_n\) exactly once.

For each \(n\), let \(Q_n\) be the topological quadrilateral contained in
\[
F\cap(D_n\setminus D_0)
\]
whose lateral sides are the portions of \(R_\varphi\) and \(R_\psi\) between
\(\partial D_0\) and \(\partial D_n\), and whose inner and outer sides are the
corresponding arcs of \(\partial D_0\) and \(\partial D_n\). Since
\(\overline Q_n\) is compact and \(G\) is properly embedded, only finitely many
vertices and edges of \(G\) meet \(\overline Q_n\).

We use the finite planar site-duality alternative in \(Q_n\), with finite faces
already absorbed into the \(*\)-graph. At least one of the following three
alternatives occurs:
\begin{enumerate}
\item there is an open \(G\)-crossing of \(Q_n\) from the inner side to the outer
side;
\item there is a closed \(*\)-crossing of \(Q_n\) joining the two lateral sides;
\item for some infinite face \(f\) of \(G\), a connected component of
\(f\cap Q_n\) joins the inner side of \(Q_n\) to the outer side of \(Q_n\).
\end{enumerate}
Indeed, after adding the finite faces inside \(Q_n\) as \(*\)-edges, the usual
finite site-duality alternative gives either an open crossing from the inner to
the outer side, or a closed \(*\)-crossing between the two lateral sides. If this
finite duality argument fails inside \(Q_n\), the obstruction must be a corridor
contained in an infinite face of \(G\), since finite faces have already been
absorbed into the \(*\)-graph.

The second alternative cannot occur. Indeed, a closed \(*\)-crossing joining the
two lateral sides would give a \(0\)-\(*\)-path with endpoints in \(\xi\). It would therefore connect
\(\varphi(K_0)\) to \(\psi(K_0)\) inside \(\xi\setminus K_0\), contradicting the
choice of \(K_0\).

If the first alternative occurred for infinitely many \(n\), then there would be
open paths in \(F\) starting from a fixed compact neighbourhood of the inner side
and reaching outside \(D_n\) for arbitrarily large \(n\). This neighbourhood
contains only finitely many vertices of \(G\), because \(G\) is properly
embedded. Hence one open cluster reaches arbitrarily far. Thus \(F\) would
contain an infinite \(1\)-cluster, contrary to the hypothesis.

Therefore, for all sufficiently large \(n\), the third alternative occurs. For
each such \(n\), choose an infinite face \(f_n\subset F\) such that a connected
component of \(f_n\cap Q_n\) joins the inner side of \(Q_n\) to the outer side.

Each \(f_n\) meets a fixed compact neighbourhood of the inner side. Since \(G\)
is properly embedded, only finitely many faces of \(G\) meet this fixed compact
neighbourhood. Hence, after passing to a subsequence, there is a single infinite
face \(f\subset F\) such that, for arbitrarily large \(n\), a connected component
of \(f\cap Q_n\) joins the inner side of \(Q_n\) to the outer side of \(Q_n\).

We now prove that this \(f\) has the asserted separation property. Let
\(K\subset V(G)\) be sufficiently large and finite. Choose a closed Jordan disk
\(D\) containing \(D_0\) and the embedded vertices of \(K\), and replace
\(R_\varphi\) and \(R_\psi\) by tails outside \(D\). Choose \(n\) from the above
subsequence so large that \(D\subset D_n\).

Suppose that
\[
\gamma\subset \mathbb R^2\setminus D
\]
is a simple curve joining a point of \(\operatorname{Emb}(R_\varphi)\) to a point of
\(\operatorname{Emb}(R_\psi)\), whose interior lies in \(F\setminus D\).
The curve \(\gamma\) is compact, so it is contained in some \(D_N\). Choose
\(n>N\) from the subsequence above. Inside \(Q_n\), the component of
\(f\cap Q_n\) joining the inner side to the outer side is a topological corridor
between the inner and outer sides. By the Jordan separation theorem for a
topological quadrilateral, every simple curve joining the two lateral sides of
\(Q_n\) must intersect $f$. This proves
the lemma.
\end{proof}

\begin{lemma}[Projecting auxiliary closed paths to \(G\)]
\label{lem:project-auxiliary-path}
Let \(\xi\) be a connected component of \(\overline G_0(\omega)\), and let
\(K\subset V(G)\) be finite. Define
\[
K^\xi:=
(K\cap V(\xi))
\cup
\{z_f\in V(\xi)\setminus V(G):\ V(\partial f)\cap K\neq\varnothing\},
\]
where \(z_f\) denotes the auxiliary vertex added in the finite face \(f\).
Let \(a,b\in V(G)\cap V(\xi)\). If \(a\) and \(b\) are connected in
\[
\xi\setminus K^\xi ,
\]
then \(a\) and \(b\) are connected in \(G\setminus K\).
\end{lemma}

\begin{proof}
Let
\[
a=x_0,x_1,\ldots,x_m=b
\]
be a path in \(\xi\setminus K^\xi\). This is a path in the auxiliary graph
\(\overline G\). Whenever two consecutive vertices \(x_i,x_{i+1}\) both belong
to \(V(G)\), the edge \(\langle x_i,x_{i+1}\rangle\) is an edge of \(G\), and we
keep it.

The only other possibility is that the path uses an auxiliary face-vertex. Then
a maximal such segment has the form
\[
u-z_f-w,
\]
where \(u,w\in V(G)\cap V(\partial f)\). Since \(z_f\notin K^\xi\), we have
\[
V(\partial f)\cap K=\varnothing .
\]
Thus \(u\) and \(w\) are connected in \(G\setminus K\) by a path along the
boundary of the finite face \(f\). Replacing every segment \(u-z_f-w\) in this
way gives a path in \(G\setminus K\) from \(a\) to \(b\).
\end{proof}

\begin{proof}[Proof of Claim~\ref{cl:bridge}]
Let \(\mathscr C_1^\infty\) be the collection of infinite \(1\)-clusters. By
assumption, \(\mathscr C_1^\infty\) is finite.

By Lemma~\ref{l03}, there is an infinite \(0\)-\(*\)-cluster of \(G\) whose
corresponding infinite connected component of \(\overline G_0(\omega)\) has
infinitely many ends. Denote this component by \(\xi\).

For each \(C\in\mathscr C_1^\infty\), the set \(\operatorname{Emb}(C)\) is
connected and disjoint from \(\operatorname{Emb}(\xi)\). Hence it is contained
in a unique connected component of
\[
\mathbb R^2\setminus\operatorname{Emb}(\xi),
\]
which we denote by \(F_\xi(C)\). By Lemma~\ref{lem68}, each \(F_\xi(C)\) is
incident to at most two ends of \(\xi\). Since \(\mathscr C_1^\infty\) is finite,
the infinite \(1\)-clusters are incident to only finitely many ends of \(\xi\).
Since \(\xi\) has infinitely many ends, choose an end \(\varphi\) of \(\xi\)
which is incident to no infinite \(1\)-cluster.

For a finite set \(K\subset V(G)\), define
\[
K^\xi:=
(K\cap V(\xi))
\cup
\{z_f\in V(\xi)\setminus V(G):\ V(\partial f)\cap K\neq\varnothing\}.
\]
Let \(\varphi(K^\xi)\) be the component of
\[
\xi\setminus K^\xi
\]
representing the end \(\varphi\). Let \(H_\varphi(K)\) be the component of
\(G\setminus K\) containing
\[
V(G)\cap V\bigl(\varphi(K^\xi)\bigr).
\]
This is well-defined by the projection of auxiliary face-vertices to boundary
paths in \(G\setminus K\).

We first separate the \(\varphi\)-tail from one fixed infinite \(1\)-cluster.

\begin{claim}
\label{cl:separate-one-cluster}
For every \(C\in\mathscr C_1^\infty\), there exists a finite set
\(B(C)\subset V(G)\) such that
\[
H_\varphi(B(C))\cap V(C)=\varnothing .
\]
\end{claim}

\begin{proof}
Fix \(C\in\mathscr C_1^\infty\). Suppose, to the contrary, that
\[
H_\varphi(K)\cap V(C)\neq\varnothing
\]
for every finite \(K\subset V(G)\).

Choose an increasing exhaustion
\[
L_1\subset L_2\subset\cdots,\qquad \bigcup_m L_m=V(G),
\]
by finite vertex sets such that each \(L_m\) is \(\xi\)-saturated, meaning that
every component of
\[
\xi\setminus L_m^\xi
\]
is infinite. This is obtained by absorbing into \(L_m\) the finitely many finite
components of \(\xi\setminus L_m^\xi\).

For each \(m\), choose a self-avoiding path
\[
\gamma_m\subset G\setminus L_m
\]
joining \(V(C)\) to
\[
V(G)\cap V\bigl(\varphi(L_m^\xi)\bigr),
\]
and stop \(\gamma_m\) at its first hit of
\[
V(G)\cap V\bigl(\varphi(L_m^\xi)\bigr).
\]
Let \(b_m\) be the terminal vertex of \(\gamma_m\).

Let \(F_m\) be the last component of
\[
\mathbb R^2\setminus\operatorname{Emb}(\xi)
\]
through which \(\gamma_m\) reaches the \(\varphi\)-tail. Let \(a_m\) be the last
vertex of \(V(G)\cap V(\xi)\) visited by \(\gamma_m\) before \(b_m\). Then
\(a_m\) and \(b_m\) lie in distinct components of
\[
\xi\setminus L_m^\xi .
\]
Indeed, if they lay in the same component, then, since
\(b_m\in V(G)\cap V(\varphi(L_m^\xi))\), we would also have
\(a_m\in V(G)\cap V(\varphi(L_m^\xi))\), contradicting the fact that
\(\gamma_m\) was stopped at its first hit of the \(\varphi\)-tail.

Since \(L_m\) is \(\xi\)-saturated, the components of
\(\xi\setminus L_m^\xi\) containing \(a_m\) and \(b_m\) are infinite and define
two distinct ends of \(\xi\). Thus \(F_m\) is an unbounded gap incident to the
end \(\varphi\) and to another end of \(\xi\).

If \(F_m\) contains an infinite \(1\)-cluster for infinitely many \(m\), then,
because \(\mathscr C_1^\infty\) is finite, a fixed infinite \(1\)-cluster \(C'\)
is contained in \(F_m\) for infinitely many \(m\). Then \(F_\xi(C')=F_m\) is
incident to \(\varphi\), contradicting the choice of \(\varphi\).

Thus, after passing to a further subsequence, \(F_m\) contains no infinite
\(1\)-cluster. Apply Lemma~\ref{lem:gap-dichotomy} to \(F_m\), the end \(\varphi\), and the
other end determined by the component containing \(a_m\). It gives an infinite
face \(f_m\subset F_m\) and a closed Jordan disk \(D_m\supset \operatorname{Emb}(L_m)\)
such that every simple curve in \(\mathbb R^2\setminus D_m\) joining the two
corresponding tail portions, with interior in \(F_m\setminus D_m\), must meet
\(f_m\).

The terminal subpath of \(\gamma_m\) from \(a_m\) to \(b_m\) is such a curve:
after erasing loops, its embedded image lies in \(\mathbb R^2\setminus D_m\), has
interior in \(F_m\setminus D_m\), and joins the two tail portions. Hence it must
meet \(f_m\). This is impossible, since this subpath is contained in
\(\operatorname{Emb}(G)\), whereas \(f_m\) is a face of \(G\), i.e. a component
of \(\mathbb R^2\setminus\operatorname{Emb}(G)\).
\end{proof}

Now apply Claim~\ref{cl:separate-one-cluster} to each
\(C\in\mathscr C_1^\infty\), and set
\[
B:=\bigcup_{C\in\mathscr C_1^\infty} B(C).
\]
This set is finite. By monotonicity of end-components,
\[
H_\varphi(B)\subseteq H_\varphi(B(C))
\qquad\text{for every }C\in\mathscr C_1^\infty .
\]
Hence
\[
H_\varphi(B)\cap
\Bigl(\bigcup_{C\in\mathscr C_1^\infty}V(C)\Bigr)
=\varnothing .
\]

Choose \(i\) such that \(B\subset K_i=B(v_0,i)\), and let \(H\) be the component
of \(G\setminus K_i\) containing
\[
V(G)\cap V\bigl(\varphi(K_i^\xi)\bigr).
\]
Then \(H\) is infinite, since it contains a tail of the end \(\varphi\). Moreover,
by monotonicity,
\[
H\subseteq H_\varphi(B).
\]
Therefore \(H\) contains no infinite \(1\)-cluster. This proves
Claim~\ref{cl:bridge}.
\end{proof}

\section{Non-self-touching polygons}\label{sect:6}

In this section we verify Assumption~\ref{ap53} for properly embedded planar graphs
whose every \emph{finite} face is a triangle.
The argument is a Peierls-type counting estimate for long non-self-touching polygons
in the matching graph, combined with an isoperimetric inequality controlling the size of
the interior neighborhood of a polygon. Related counting ideas for self-avoiding polygons
on planar graphs appear in \cite{CP19}.

\begin{definition}\label{df56}
Given a locally finite, simple graph $G=(V,E)$ and an edge $e=\langle x,y\rangle\in E$,
let $c_n(e)$ be the number of length-$n$ non-self-touching polygons on $G$ containing $e$.
Define the exponential growth rate
\[
\gamma_{G}(e):=\limsup_{n\to\infty}\bigl[c_{n}(e)\bigr]^{1/n}.
\]
\end{definition}

\begin{lemma}\label{l81}
Suppose that $G=(V,E)$ is an infinite, locally finite, connected graph and
$e=\langle u,v\rangle\in E$.
Then for any $p<1/\gamma_G(e)$,
\begin{equation}\label{lssz}
\lim_{m\to\infty}\mathbb{P}_p\bigl(u\xleftrightarrow{nstp,\ \ge m} v\bigr)=0,
\end{equation}
where $\mathbb{P}_p(u\xleftrightarrow{nstp,\ \ge m} v)$ denotes the probability that there
exists a non-self-touching $1$-polygon of length at least $m$ containing $e=\langle u,v\rangle$.
\end{lemma}

\begin{proof}
Let $\gamma:=\gamma_G(e)$.
If $\gamma=0$, then $c_n(e)=0$ for all sufficiently large $n$, and \eqref{lssz} is trivial.
Assume $\gamma\in(0,\infty)$ and fix $p<1/\gamma$.
Choose $\varepsilon>0$ such that $p(\gamma+\varepsilon)<1$.

By the definition of $\limsup$, there exists $n_0\in\mathbb{N}$ such that
$c_n(e)\le (\gamma+\varepsilon)^n$ for all $n\ge n_0$.
Hence for any $m\ge n_0$,
\[
\mathbb{P}_p\bigl(u\xleftrightarrow{nstp,\ \ge m} v\bigr)
\le \sum_{n\ge m} c_n(e)\,p^n
\le \sum_{n\ge m}\bigl[(\gamma+\varepsilon)p\bigr]^n
= \frac{\bigl(p(\gamma+\varepsilon)\bigr)^m}{1-p(\gamma+\varepsilon)}.
\]
Since $p(\gamma+\varepsilon)<1$, the right-hand side tends to $0$ as $m\to\infty$,
which proves \eqref{lssz}.
\end{proof}

\begin{lemma}\label{ll74}
Let $G=(V,E)$ be an infinite, connected graph properly embedded into $\mathbb{R}^2$.
Assume the minimal vertex degree is at least $d$ and each \emph{finite} face of $G$ is a triangle.
Let
\[
r=\frac{1}{d-5}.
\]
Then for any $e\in E$,
\[
\gamma_{G_*}(e)\le \frac{(1+r)^{1+r}}{r^r}.
\]
\end{lemma}

\begin{proof}
Since every finite face of $G$ is a triangle, the matching graph adds no extra edges
(Definition~\ref{df64}), hence $G_*=G$. It suffices to bound $\gamma_G(e)$.

Fix a non-self-touching polygon $P$ in $G$ and write $|P|=n$ for its length.
Let $\mathrm{Int}(P)$ be the bounded component of $\mathbb{R}^2\setminus P$, and define
\[
\partial^{V,\circ}P
:=\bigl\{x\in V\cap \mathrm{Int}(P):\ x\sim v\ \text{for some }v\in V(P)\bigr\}.
\]
(When all finite faces are triangles, this is equivalently the set of vertices in $\mathrm{Int}(P)$
that share a triangular face with a vertex of $P$.)

A linear isoperimetric estimate for triangulations with minimum degree at least $d$
(see, e.g., the argument of \cite[Theorem~6]{CP19}) yields
\begin{equation}\label{eq:iso-layer}
(d-5)\,|\partial^{V,\circ}P|\le |P|=n.
\end{equation}

Now consider Bernoulli($p$) site percolation on $V$.
We say that $P$ \emph{occurs} if all vertices of $P$ are open and all vertices in
$\partial^{V,\circ}P$ are closed. Then
\[
\mathbb{P}_p(P\ \text{occurs})
= p^{|P|}(1-p)^{|\partial^{V,\circ}P|}
\ge p^{n}(1-p)^{n/(d-5)}
=\bigl[p(1-p)^r\bigr]^n.
\]

Fix an edge $e\in E$ and let $N_n(e)$ be the number of occurring length-$n$
non-self-touching polygons containing $e$.
We claim that
\begin{equation}\label{eq:Nne-2}
N_n(e)\le 2\qquad\text{for every percolation configuration.}
\end{equation}
Indeed, an occurring polygon containing $e$ has a well-defined interior side of $e$
(left or right in the embedding). We show there is \emph{at most one} occurring polygon
containing $e$ with a prescribed choice of the interior side, and hence at most two in total.
Suppose for contradiction that $P\neq P'$ are two distinct occurring polygons containing
$e$ whose interiors lie on the same side of $e$. Traverse both cycles starting from $e$
in the direction for which the interior stays, say, on the left. Let $x$ be the first vertex
where the two traversals diverge. Let $y$ (resp.\ $y'$) be the next vertex after $x$ along
$P$ (resp.\ $P'$), so $y\neq y'$.
By planarity and the fact that both interiors lie on the same side, one of $y,y'$ must lie
strictly inside the other polygon while being adjacent to $x$ on its boundary. Hence that
vertex belongs to $\partial^{V,\circ}$ of the other polygon and therefore must be closed on the
event that the other polygon occurs. But it also lies on an occurring polygon, so it must be open.
This contradiction proves uniqueness for each choice of interior side and thus \eqref{eq:Nne-2}.

Taking expectations and using the lower bound on occurrence probabilities,
\[
2\ \ge\ \mathbb{E}N_n(e)
= \sum_{\substack{P\ni e\\ |P|=n}}\mathbb{P}_p(P\ \text{occurs})
\ \ge\ c_n(e)\,\bigl[p(1-p)^r\bigr]^n.
\]
Therefore
\[
c_n(e)\le 2\,[p(1-p)^r]^{-n},
\]
and taking $n$th roots and $\limsup_{n\to\infty}$ gives
\[
\gamma_G(e)\le \frac{1}{p(1-p)^r}\qquad\text{for all }p\in(0,1).
\]
Optimizing over $p$ yields
\[
\gamma_G(e)\le \min_{p\in[0,1]}\frac{1}{p(1-p)^r}
=\frac{(1+r)^{1+r}}{r^r},
\]
as claimed.
\end{proof}

\begin{lemma}\label{l65}
Let $G=(V,E)$ be an infinite, connected graph properly embedded into $\mathbb{R}^2$
with minimal degree at least $7$. Assume each finite face of $G$ is a triangle. Then
\begin{equation}\label{pgs}
p_c^{site}(G_*)\,\gamma_{G_*}(e)<1.
\end{equation}
\end{lemma}

\begin{proof}
When each finite face of $G$ is a triangle, we have $G_*=G$.
Applying Lemma~\ref{ll74} with $d=7$ gives $r=\frac12$ and hence
\begin{equation}\label{pgs1}
\gamma_{G_*}(e)\le \frac{(1+r)^{1+r}}{r^r}
=\frac{(3/2)^{3/2}}{(1/2)^{1/2}}
=\frac{3\sqrt{3}}{2}\approx 2.598.
\end{equation}
Let
\[
\alpha_7=\frac{1+\sqrt{5}}{2}.
\]
By \cite[Theorem~2]{HP19},
\begin{equation}\label{pgs2}
p_c^{site}(G_*)=p_c^{site}(G)\le \frac{2+\alpha_7}{4(1+\alpha_7)}\approx 0.3455.
\end{equation}
Combining \eqref{pgs1} and \eqref{pgs2} yields \eqref{pgs}.
\end{proof}

\begin{remark}
Combining Lemmas~\ref{l81} and \ref{l65}, we may choose $\delta>0$ such that
$p_c^{site}(G_*)+\delta<1/\gamma_{G_*}(e)$ for all $e\in E_*$.
In particular, Assumption~\ref{ap53} holds for graphs whose every finite face is a triangle
and whose minimum degree is at least $7$.
\end{remark}

\section{Uniform Percolation with respect to Binary Trees}\label{sect:up}

In this section we extend the near-$1$ non-uniqueness result from triangulations to
general properly embedded planar graphs by (i) triangulating finite faces via edge additions
and proving a stability statement under deleting the added edges, and (ii) invoking a
binary-tree version of uniform percolation (in the sense of \eqref{upbt}) which avoids any
bounded-degree hypothesis, in contrast to \cite{RS98}.

Throughout, $G=(V,E)$ is infinite, connected, properly embedded in $\mathbb R^2$, and has
minimal vertex degree at least $7$. For each $x\in V$, Section~\ref{sect:tree} provides an
embedded infinite rooted binary tree $T_{2,x}\subseteq G$.

\begin{lemma}\label{le71}
Let $G=(V,E)$ be as above. For each $x\in V$, let $T_{2,x}$ be an infinite rooted binary tree
embedded in $G$ with root $x$. Then for every $p\in(\tfrac12,1]$,
\begin{equation}\label{upbt}
\lim_{N\to\infty}\ \inf_{x\in V}\ \inf_{\substack{T_{2,x,N}\subseteq T_{2,x}:\\
\text{a rooted binary tree of depth }N\text{ at }x}}
\mathbb P_p\!\left(T_{2,x,N}\ \text{intersects an infinite open (1-)cluster of }G\right)=1.
\end{equation}
\end{lemma}

\begin{proof}From Section \ref{sect:tree}, we see that for any $x\in V$ we can find an infinite binary tree $T_{2,x}$ rooted at $x$ as a subgraph of $G$. Let $T_{2,x,N}\subset T_{2,x}$ be a depth-$N$ binary tree rooted at $x$.

Since the binary tree $T_2$ is self-similar and has $p_c^{site}(T_2)=\frac{1}{2}$, we infer that
for any $\epsilon>0$, there exists $N_0>0$, such that for any $N>N_0$
\begin{align*}
\mathbb{P}_p(T_{2,x,N}\ \mathrm{intersects\ an\ infinite\ cluster\ in\ }T_{2,x})>1-\epsilon;
\end{align*}
in which the integer $N_0$ is independent of the root $x$ and the specific embedding of $T_{2,x}$ into $G$.
Then the lemma follows since each infinite 1-cluster in $T_{2,x}$ must be a subset of an infinite 1-cluster on $G$.
\end{proof}

\medskip

\noindent\textbf{Triangulating finite faces.}
Define $\tilde G=(V,\tilde E)$ by adding non-crossing diagonals inside each finite face of
$G$ so that every \emph{finite} face of $\tilde G$ is a triangle. Clearly $E\subseteq\tilde E$,
$\tilde G$ is still properly embedded, and $\deg_{\tilde G}(v)\ge \deg_G(v)\ge 7$ for all
$v\in V$. Moreover, adding edges can only decrease the site percolation threshold:
\begin{equation}\label{pcmon}
p_c^{\mathrm{site}}(\tilde G)\le p_c^{\mathrm{site}}(G).
\end{equation}

\medskip\noindent\textbf{Exploration.}
Fix a total order on $V$.
Given a site configuration $\eta\in\{0,1\}^V$ on $G$ and a vertex $x\in V$,
we explore the $1$-cluster of $x$ in $G$ as follows.
If $\eta(x)=0$, set $C_{0,G}(x)=\varnothing$ and $\partial C_{0,G}(x)=\varnothing$.
If $\eta(x)=1$, set $C_{0,G}(x)=\{x\}$ and $\partial C_{0,G}(x)=\varnothing$.
Inductively, at step $i\ge1$:
if there exists a vertex at $G$-distance $1$ from $C_{i-1,G}(x)$ that is not in
$\partial C_{i-1,G}(x)$, let $X_i$ be the first such vertex in the fixed order and define
\[
C_{i,G}(x)=
\begin{cases}
C_{i-1,G}(x)\cup\{X_i\}, & \eta(X_i)=1,\\
C_{i-1,G}(x), & \eta(X_i)=0,
\end{cases}
\qquad
\partial C_{i,G}(x)=
\begin{cases}
\partial C_{i-1,G}(x), & \eta(X_i)=1,\\
\partial C_{i-1,G}(x)\cup\{X_i\}, & \eta(X_i)=0.
\end{cases}
\]
If no such vertex exists, stop and set
\[
C_G(x):=C_{i-1,G}(x),\qquad \partial C_G(x):=\partial C_{i-1,G}(x).
\]
Then $C_G(x)=\bigcup_{i\ge0}C_{i,G}(x)$ is exactly the $1$-cluster of $x$ in $G$ and
$\partial C_G(x)=\bigcup_{i\ge0}\partial C_{i,G}(x)$ is its (outer) vertex boundary.
The same exploration applies to $\widetilde G$.

\begin{lemma}\label{le73}
Let $G$ be as above and let $\tilde G$ be obtained by triangulating finite faces as above.
For every $p\in(0,1]$ and every integer $N\ge 1$, $\mathbb P_p$-a.s.\ every infinite open
cluster in $\tilde G$ contains a rooted binary tree of depth $N$ which uses only edges of
$G$.
\end{lemma}

\begin{proof}
The argument adapts the ``finite-energy'' multiscale proof of \cite[Lemma~1.1]{RS98},
replacing balls by embedded binary trees (thus avoiding bounded-degree assumptions); see also the proof of \cite[Theorem~2.3]{hps98}.

Fix $N\ge 3$ and $x\in V$. Let $C_{\tilde G}(x)$ denote the open cluster of $x$ in $\tilde G$.
Let $E_{N,x}$ be the event that $C_{\tilde G}(x)$ is infinite but contains no rooted binary
tree of depth $N$ in $G$. We show $\mathbb P_p(E_{N,x})=0$.

Write $B_{\tilde G}(x,r)$ for the ball of radius $r$ around $x$ in the graph metric of
$\tilde G$. For $K\ge 1$, let $E_{N,x}^K$ be the event that $C_{\tilde{G}}(x)$ include a vertex in $\partial B_{\tilde{G}}(x,4NK)$ and contains no depth-$N$ rooted
binary tree of $G$ in the ball $B_{\tilde{G}}(x,4NK)$.

Fix $K\ge 1$ and consider the outer annulus
\[
A:=B_{\tilde G}(x,4N(K+1))\setminus B_{\tilde G}(x,4NK+1).
\]
Let $\mathcal{T}_{N,K}$ be the set consisting of all the binary tree of depth $N$ with all the vertices in $A$. For each configuration in $E_{N,x}^K$ let $y$ be the minimal vertex in $C_{\tilde{G}}(x)\cap \partial B_{\tilde{G}}(x,4NK)$. 

Do an exploration on the graph $G_K:=\tilde{G}\setminus B_{\tilde{G}}(x,4NK)$ starting from the minimal neighboring vertex of $y$ on $G_K$.
 Let $\tau_K$ be the first step in the exploration process such that the next unexplored vertex is in a tree $T\in\mathcal{T}_{N,K}$.  It follows that 
\begin{align*}
\PP_p(C_{\tilde{G}}(x)\ \mathrm{contains\ a\ tree\ in\ } \mathcal{T}_{N,K}|E_{N,x}^K\cap \{\tau_K<\infty\})\geq p^{2^{N+1}-1}
\end{align*}
Note that 
\begin{align*}
E_{N,x}^{K+1}\subset [E_{N,x}^K\cap\{\tau_K<\infty\}]
\end{align*}
In particular, if $C_{\tilde{G}}(x)$ contains a vertex on $\partial B_{\tilde{G}}(x,4N(K+1))$, then it contains a vertex on $\partial B_{\tilde{G}}(x,4NK+2N)$, every neighbor of which is a root of a tree in $\mathcal{T}_{N,K}$, and therefore $\tau_K$ must be less than the first hitting time of $\partial B_{\tilde{G}}(x,4NK+2N)$; the latter is finite when
$C_{\tilde{G}}(x)$ contains a vertex on $\partial B_{\tilde{G}}(x,4N(K+1))$.

By construction, at time 
$\tau_K$,the states of vertices in 
$V_T$ are not revealed yet; hence, conditional on the exploration history up to $\tau_K$, they remain i.i.d. Bernoulli($p$). Therefore the probability that all vertices of 
$T$ are open equals $p^{2^{N+1}-1}$.
Then
\begin{align}
\label{edc}
&\PP_p(E_{N,x}^{K+1}\cap \{\tau_{K+1}<\infty\}|E_{N,x}^K\cap\{\tau_K<\infty\})\\
&\leq 1-\PP_p(C_{\tilde{G}}(x)\ \mathrm{contains\ a\ tree\ in\ } \mathcal{T}_{N,{K}}|E_{N,x}^K\cap\{\tau_K<\infty\})\leq 1-p^{2^{N+1}-1}\notag
\end{align}
Note that $E_{N,x}=\cap_{K=1}^{\infty}[E_{N,x}^K\cap \{\tau_K<\infty\}]$. The lemma follows from (\ref{edc}) since $p^{2^{N+1}-1}$ is independent of $K$, and the fact that
\begin{align*}
    \PP_p(E_{N,x})&=\lim_{k\rightarrow\infty}\PP_p(E_{N,x}^K\cap \{\tau_K<\infty\})\\
&\leq \lim_{K\rightarrow\infty}\prod_{i=2}^{K}\PP_p(E_{N,x}^i\cap \{\tau_i<\infty\}|E_{N,x}^{i-1}\cap \{\tau_{i-1}<\infty\})\\
&\leq \lim_{K\rightarrow\infty}(1-p^{2^{N+1}-1})^K=0.
\end{align*}
\end{proof}

\begin{lemma}\label{ll73}
Let $G$ be as above and let $\tilde G$ be the triangulation of finite faces. Then there
exists $\delta>0$ such that for every
\[
p\in\bigl(1-p_c^{\mathrm{site}}(\tilde G)-\delta,\ 1-p_c^{\mathrm{site}}(\tilde G)\bigr),
\]
$\mathbb P_p$-a.s.\ there are infinitely many infinite open clusters in $G$.
\end{lemma}

\begin{proof}
If $G=\tilde G$ (i.e.\ every finite face is a triangle), the conclusion is exactly the
near-$1$ non-uniqueness result proved for triangulations in the previous section.

Assume now $G\neq \tilde G$. For triangulations, there exists $\delta>0$ such that for
every $p$ in the stated interval, $\mathbb P_p$-a.s.\ $\tilde G$ has infinitely many infinite
open clusters. It therefore suffices to prove the following stability statement:

\smallskip
\noindent\emph{Claim.} For such $p$, $\mathbb P_p$-a.s.\ every infinite open cluster of $\tilde G$
contains an infinite open cluster of $G$.

\smallskip
Fix $p$ in the stated interval. Note that $p>1/2$ since $p_c^{\mathrm{site}}(\tilde G)<1/2$
(Section~\ref{sect:tree}), so Lemma~\ref{le71} applies.

Let $E_x$ be the event that $C_{\tilde G}(x)$ is infinite but contains no infinite open
cluster of $G$, and let $E=\bigcup_{x\in V}E_x$. We show $\mathbb P_p(E)=0$.
For a set $C\subseteq V$, define
\[
W(C):=\#\Bigl\{v\notin C:\ v\sim_{\tilde G} C\ \text{ and } \exists\text{ an infinite open cluster }I
\text{ of }G \text{ with } v\sim_G I\Bigr\}.
\]
For $x\in V$, split $E_x$ into $E_x^f:=E_x\cap\{W(C_{\tilde G}(x))<\infty\}$ and
$E_x^\infty:=E_x\cap\{W(C_{\tilde G}(x))=\infty\}$, and set
$E^f:=\bigcup_xE_x^f$, $E^\infty:=\bigcup_xE_x^\infty$. Then $E=E^f\cup E^\infty$.

\medskip
\noindent\textbf{Step 1: }$\mathbb P_p(E^0)=0$ where $E^0:=\bigcup_x(E_x\cap\{W=0\})$.
Fix $x$. We use a duplication trick with two independent Bernoulli($p$) configurations
$\omega',\omega''$.

Explore the open cluster $C'_{\tilde G}(x)$ in $\omega'$ until either (i) the exploration
finishes (so $C'_{\tilde G}(x)$ is finite), or (ii) the explored open set contains a rooted
binary tree of depth $N$ in $G$. In case (ii), let $Y$ be the root of the first such tree explored and denote this depth-$N$ tree by $T_{2,Y,N}$; let $S$ be the (finite)
set of revealed vertices (explored open vertices together with their explored $\tilde G$-boundary).

Define a new configuration $\omega$ by setting $\omega=\omega'$ on $S$ and
$\omega=\omega''$ on $V\setminus S$. Then $\omega$ is again i.i.d.\ Bernoulli($p$).

Let $\widetilde F$ be the event that case (ii) occurs and that $T_{2,Y,N}$ intersects an
infinite open cluster of $G$ in the independent configuration $\omega''$.
On the event $E_x\cap\{W=0\}$ for $\omega$, the cluster $C_{\tilde G}(x)$ is infinite; hence
the exploration cannot have terminated in case (i) (otherwise the revealed $\tilde G$-boundary
would block any further growth), so case (ii) occurs. Moreover, if $\widetilde F$ occurs,
then either $C_{\tilde G}(x)$ contains an infinite open $G$-cluster (if the intersection occurs
through an open vertex of $C_{\tilde G}(x)$), or else some $\tilde G$-boundary vertex of
$C_{\tilde G}(x)$ is $G$-adjacent to an infinite open $G$-cluster (because removing the finite
set $S$ from an infinite $\omega''$-cluster leaves an infinite component adjacent to $S$).
In either case, we cannot be in $E_x\cap\{W=0\}$. Thus
\[
\mathbb P_p(E_x\cap\{W=0\})\ \le\ \mathbb P_p(\widetilde F^{\,c}\mid \text{case (ii)}).
\]
Conditioning on $Y$ and the choice of $T_{2,Y,N}$ and using independence of $\omega'$
and $\omega''$, Lemma~\ref{le71} implies that the right-hand side tends to $0$ as $N\to\infty$.
Hence $\mathbb P_p(E_x\cap\{W=0\})=0$ for all $x$, and therefore $\mathbb P_p(E^0)=0$.

\medskip
\noindent\textbf{Step 2: }$\mathbb P_p(E^f)=0$.
If $\mathbb P_p(E^f)>0$, then for some fixed origin $v_0\in V$ there exists $M<\infty$
such that the event $E^{f,M}$ occurs with positive probability, where $E^{f,M}$ is the event
that there exists an infinite open cluster $\xi$ in $\tilde G$ which contains no infinite open
$G$-cluster and such that every vertex counted by $W(\xi)$ lies inside $B_{\tilde G}(v_0,M)$.

Now couple two independent configurations $\omega',\omega''$ and define $\eta$ by
taking $\eta=\omega'$ on $B_{\tilde G}(v_0,2M)$ and $\eta=\omega''$ on its complement.
On the event that $\omega'\equiv 0$ on $B_{\tilde G}(v_0,2M)$ and that $E^{f,M}$ occurs in
$\omega''$, the configuration $\eta$ belongs to $E^0$ (all potential $W$-neighbors are
killed inside the $2M$-ball). Hence $\mathbb P_p(E^0)>0$, contradicting Step~1. Therefore
$\mathbb P_p(E^f)=0$.

\medskip
\noindent\textbf{Step 3: }$\mathbb P_p(E^\infty)=0$.
Fix $x$. Assume we have explored $C_{G}(x)$ and $\partial C_{G}(x)$. If $|C_G(x)|=\infty$, then the infinite 1-cluster at $x$ in $\tilde{G}$ contains an infinite 1-cluster $C_{G}(x)$ in $G$.

Now assume $|C_G(x)|<\infty$.
We explore $C_{\tilde G}(x)$ by growing it from $C_G(x)$.  Set
\[
C_{0,\tilde G}(x):= C_G(x),\qquad \partial C_{0,\tilde G}(x)=\partial C_G(x),
\]
and iteratively reveal $\tilde G$-neighbors of $C_{i-1,\tilde G}(x)$ not in $\partial C_{i-1,\tilde G}(x)$ in a fixed order: if the
next vertex $X_i$ is closed, put it into $\partial C_{i,\tilde G}(x)$; if it is open, adjoin
it to the explored set. This yields $C_{\tilde G}(x)=\bigcup_i C_{i,\tilde G}(x)$.

Let $I_1<I_2<\cdots$ be the indices $i$ such that $X_i\notin [C_{G}(x)\cup\partial C_G(x)]$ and $X_i$ is $G$-adjacent to an infinite open
$G$-cluster. By independence of vertex states, the random variables
$\omega(X_{I_1}),\omega(X_{I_2}),\dots$ are i.i.d.\ Bernoulli($p$). On the event $E_x^\infty$,
there are infinitely many such indices and necessarily $\omega(X_{I_j})=0$ for all $j$
(otherwise an infinite open $G$-cluster would be absorbed into $C_{\tilde G}(x)$), which has
probability $\lim_{n\to\infty}(1-p)^n=0$. Hence $\mathbb P_p(E_x^\infty)=0$ for each $x$,
and by countable union $\mathbb P_p(E^\infty)=0$.

Combining Steps 1--3 gives $\mathbb P_p(E)=0$, proving the Claim and hence the lemma.
\end{proof}

\begin{lemma}\label{p62}
Let $G=(V,E)$ be as above and couple percolation by i.i.d.\ $U(v)\sim\mathrm{Unif}[0,1]$.
For $p\in[0,1]$, let $V_p=\{v:U(v)\le p\}$ and $G_p$ be the induced subgraph.
Assume $\tfrac12<p_1<p_2\le 1$ and that $G$ has uniform percolation at level $p_1$ in the
sense of Lemma~\ref{le71}. Then almost surely every infinite cluster of $G_{p_2}$ contains
at least one infinite cluster of $G_{p_1}$.
\end{lemma}

\begin{proof}
This is proved by the same duplication/exploration argument as \cite[Theorem~1.1]{RS98},
replacing ``balls'' by ``depth-$N$ embedded binary trees'' and invoking Lemma~\ref{le71}
instead of ball-uniformity. The bounded-degree assumption in \cite{RS98} is only used to
work with balls; the binary-tree version avoids this. We omit the repeated details.
\end{proof}

\noindent\textbf{Proof of Theorem~\ref{mt1}.}
By Lemma~\ref{ll73}, for every $p_2\in(\,1-p_c^{\mathrm{site}}(\tilde G)-\delta,\,1-p_c^{\mathrm{site}}(\tilde G))$ such that
$\mathbb P_{p_2}$-a.s.\ $G$ has infinitely many infinite open clusters. Fix any
$p\in(\tfrac12,\,1-p_c^{\mathrm{site}}(G))$. Using \eqref{pcmon} we may choose such a $p_2$ with
$p<p_2<1-p_c^{\mathrm{site}}(\tilde G)$, and Lemma~\ref{p62} then implies that each infinite
cluster at level $p_2$ contains an infinite cluster at level $p$. Since there are infinitely
many infinite clusters at level $p_2$, it follows that there are infinitely many infinite
clusters at level $p$. Together with the results from the earlier sections for
$p\in(p_c^{\mathrm{site}}(G),\tfrac12]$, this completes the proof.

\bigskip
\noindent\textbf{Acknowledgements.}
ZL thanks Russell Lyons for comments.
ZL acknowledges support from the National Science Foundation DMS 1608896 and Simons Foundation grant 638143.

\bibliography{psg}
\bibliographystyle{plain}

\end{document}